\newcounter{Theorem}[section]
\numberwithin{Theorem}{section}
\newtheorem{problem}[subsection]{Problem}
\newtheorem{theorem}[Theorem]{Theorem}
\newtheorem{corollary}[Theorem]{Corollary}
\newtheorem{proposition}[Theorem]{Proposition}
\newtheorem{lemma}[Theorem]{Lemma}
\newtheorem{observation}[Theorem]{Observation}
\theoremstyle{definition}
\newtheorem{definition}[Theorem]{Definition}
\newtheorem{remark}[Theorem]{Remark}
\DeclareMathOperator{\diam}{\operatorname{diam}}
\DeclareMathOperator{\ecc}{\operatorname{ecc}}
\title[A characterization of graphs with at most four boundary vertices]{A characterization of graphs with\\at most four boundary vertices}
\author[N. Chiem]{Nick Chiem}
\address{Department of Mathematics, University of California, Riverside, CA 92521, USA}
\email{nchie005@ucr.edu}
\author[W. Dudarov]{William Dudarov}
\address{Department of Mathematics, University of Washington, Seattle, WA 98125, USA}
\email{wdudarov@gmail.com}
\author[C. Lee]{Chris Lee}
\address{Department of Mathematics, University of Washington, Seattle, WA 98125, USA}
\email{chris.lee.math@gmail.com}
\author[S. Lee]{Sean Lee}
\address{Department of Mathematics, University of Washington, Seattle, WA 98125, USA}
\email{seanlee.beaverton.or@gmail.com}
\author[K. Liu]{Kevin Liu}
\address{Department of Mathematics, University of Washington, Seattle, WA 98125, USA}
\email{kliu15@uw.edu}
\keywords{Graph, Boundary, Isoperimetric Inequality, Metric Space, Extremal}
\subjclass[2020]{Primary: 05C69, Secondary: 31E05}
\begin{document}


\maketitle

\vspace{-0.3in}
\begin{abstract}
    Steinerberger defined a notion of boundary for a graph and established a corresponding isoperimetric inquality. Hence, ``large" graphs have more boundary vertices. In this paper, we first characterize graphs with three boundary vertices in terms of two infinite families of graphs. We then completely characterize graphs with four boundary vertices in terms of eight families of graphs, five of which are infinite.
    This parallels earlier work by Hasegawa and Saito as well as M\"uller, P\'or, and Sereni on another notion of boundary defined by Chartrand, Erwin, Johns, and Zhang.
\end{abstract}

\section{Introduction}

A priori, the boundary of a graph is not a meaningful concept, as graphs do not have interiors or complements. However, the boundary for subsets in $\mathbb{R}^n$ is induced by a metric, which graphs are also equipped with. Hence, one can ask more generally on any metric space how to find an axiomatic approach to defining subsets that behave in a boundary-like manner. Chartrand, Erwins, Johns, and Zhang proposed one definition for graphs in \cite{chartrand}, and Steinerberger proposed another in \cite{steinerberger}.

\begin{figure}[h!]
    \centering
    \scalebox{0.3}{
    \begin{tikzpicture}
    \node[shape=circle,draw=black, fill=blue] (00) at (0,0) {};
    \node[shape=circle,draw=black, fill=blue] (10) at (2,0) {};
    \node[shape=circle,draw=black, fill=blue] (20) at (4,0) {};
    \node[shape=circle,draw=black, fill=blue] (01) at (0,2) {};
    \node[shape=circle,draw=black, fill=blue] (11) at (2,2) {};
    \node[shape=circle,draw=black, fill=blue] (21) at (4,2) {};
    \node[shape=circle,draw=black, fill=blue] (02) at (0,4) {};
    \node[shape=circle,draw=black, fill=blue] (12) at (2,4) {};
    \node[shape=circle,draw=black, fill=blue] (22) at (4,4) {};
    \node[shape=circle,draw=black, fill=blue] (30) at (6,0) {};
    \node[shape=circle,draw=black, fill=blue] (31) at (6,2) {};
    \node[shape=circle,draw=black, fill=blue] (32) at (6,4) {};
    \node[shape=circle,draw=black, fill=red] (n11) at (-1,-1) {};
    \node[shape=circle,draw=black, fill=red] (44) at (7,7) {};
    \node[shape=circle,draw=black, fill=red] (n14) at (-1,7) {};
    \node[shape=circle,draw=black, fill=red] (n41) at (7,-1) {};
    \node[shape=circle,draw=black, fill=blue] (0515) at (1,3) {};
    \node[shape=circle,draw=black, fill=blue] (2505) at (5,1) {};
    \node[shape=circle,draw=black, fill=blue] (0505) at (1,1) {};
    \node[shape=circle,draw=black, fill=blue] (2515) at (5,3) {};
    \node[shape=circle,draw=black, fill=blue] (1505) at (3,1) {};
    \node[shape=circle,draw=black, fill=blue] (2525) at (5,5) {};
    \node[shape=circle,draw=black, fill=blue] (1525) at (3,5) {};
    \node[shape=circle,draw=black, fill=blue] (0525) at (1,5) {};
    \node[shape=circle,draw=black, fill=blue] (03) at (0,6) {};
    \node[shape=circle,draw=black, fill=blue] (13) at (2,6) {};
    \node[shape=circle,draw=black, fill=blue] (23) at (4,6) {};
    \node[shape=circle,draw=black, fill=blue] (33) at (6,6) {};
    \node[shape=circle,draw=black, fill=blue] (1515) at (3,3) {};
    \path (1515) edge node {} (11);
    \path (1515) edge node {} (21);
    \path (1515) edge node {} (12);
    \path (1515) edge node {} (22);
    \path (1515) edge node {} (1505);
    \path (1515) edge node {} (2515);
    \path (1515) edge node {} (0515);
    \path (1515) edge node {} (1525);
    \path (0505) edge node {} (1505);
    \path (1505) edge node {} (2505);
    \path (0505) edge node {} (0515);
    \path (2505) edge node {} (2515);
    \path (2515) edge node {} (2525);
    \path (0515) edge node {} (0525);
    \path (0525) edge node {} (1525);
    \path (1525) edge node {} (2525);
    \path (00) edge node {} (10);
    \path (10) edge node {} (20);
    \path (00) edge node {} (01);
    \path (01) edge node {} (11);
    \path (11) edge node {} (21);
    \path (02) edge node {} (12);
    \path (12) edge node {} (22);
    \path (01) edge node {} (02);
    \path (21) edge node {} (22);
    \path (20) edge node {} (21);
    \path (10) edge node {} (11);
    \path (11) edge node {} (12);
    \path (22) edge node {} (32);
    \path (21) edge node {} (31);
    \path (20) edge node {} (30);
    \path (30) edge node {} (31);
    \path (31) edge node {} (32);
    \path (20) edge node {} (2505);
    \path (30) edge node {} (2505);
    \path (21) edge node {} (2505);
    \path (31) edge node {} (2505);
    \path (10) edge node {} (1505);
    \path (20) edge node {} (1505);
    \path (11) edge node {} (1505);
    \path (21) edge node {} (1505);
    \path (01) edge node {} (0515);
    \path (11) edge node {} (0515);
    \path (02) edge node {} (0515);
    \path (12) edge node {} (0515);
    \path (00) edge node {} (0505);
    \path (10) edge node {} (0505);
    \path (01) edge node {} (0505);
    \path (11) edge node {} (0505);
    \path (21) edge node {} (2515);
    \path (31) edge node {} (2515);
    \path (22) edge node {} (2515);
    \path (32) edge node {} (2515);
    \path (02) edge node {} (0525);
    \path (03) edge node {} (0525);
    \path (12) edge node {} (0525);
    \path (13) edge node {} (0525);
    \path (12) edge node {} (1525);
    \path (13) edge node {} (1525);
    \path (22) edge node {} (1525);
    \path (23) edge node {} (1525);
    \path (22) edge node {} (2525);
    \path (23) edge node {} (2525);
    \path (32) edge node {} (2525);
    \path (33) edge node {} (2525);
    \path (00) edge node {} (n11);
    \path (30) edge node {} (n41);
    \path (03) edge node {} (n14);
    \path (33) edge node {} (44);
    \path (03) edge node {} (02);
    \path (13) edge node {} (12);
    \path (23) edge node {} (22);
    \path (33) edge node {} (32);
    \path (03) edge node {} (13);
    \path (13) edge node {} (23);
    \path (23) edge node {} (33);
    \node at (0,-2) {};
\end{tikzpicture} 
\qquad \qquad \qquad 
\begin{tikzpicture}
   \node[shape=circle,draw=black, fill=blue] (00) at ({3-2*sqrt(2)},{3-2*sqrt(2)}) {};
    \node[shape=circle,draw=black, fill=blue] (20) at ({3+4*sin(45)/sin(112.5)*cos(11*180/8)},{3+4*sin(45)/sin(112.5)*sin(11*180/8)}) {};
    \node[shape=circle,draw=black, fill=blue] (40) at ({3+4*sin(45)/sin(112.5)*cos(13*180/8)},{3+4*sin(45)/sin(112.5)*sin(13*180/8)}) {};
    \node[shape=circle,draw=black, fill=blue] (02) at ({3+4*sin(45)/sin(112.5)*cos(9*180/8)},{3+4*sin(45)/sin(112.5)*sin(9*180/8)}) {};
    \node[shape=circle,draw=black, fill=blue] (04) at ({3+4*sin(45)/sin(112.5)*cos(7*180/8)},{3+4*sin(45)/sin(112.5)*sin(7*180/8)}) {};
    \node[shape=circle,draw=black, fill=blue] (60) at ({3+2*sqrt(2)},{3-2*sqrt(2)}) {};
    \node[shape=circle,draw=black, fill=blue] (62) at ({3+4*sin(45)/sin(112.5)*cos(15*180/8)},{3+4*sin(45)/sin(112.5)*sin(15*180/8)}) {};
    \node[shape=circle,draw=black, fill=blue] (64) at ({3+4*sin(45)/sin(112.5)*cos(180/8)},{3+4*sin(45)/sin(112.5)*sin(180/8)}) {};
    \node[shape=circle,draw=black, fill=blue] (06) at ({3-2*sqrt(2)},{3+2*sqrt(2)}) {};
    \node[shape=circle,draw=black, fill=blue] (26) at ({3+4*sin(45)/sin(112.5)*cos(5*180/8)},{3+4*sin(45)/sin(112.5)*sin(5*180/8)}) {};
    \node[shape=circle,draw=black, fill=blue] (46) at ({3+4*sin(45)/sin(112.5)*cos(3*180/8)},{3+4*sin(45)/sin(112.5)*sin(3*180/8)}) {};
    \node[shape=circle,draw=black, fill=blue] (66) at ({3+2*sqrt(2)},{3+2*sqrt(2)}) {};
    \node[shape=circle,draw=black, fill=blue] (37) at (3,7) {};
    \node[shape=circle,draw=black, fill=blue] (3n1) at (3,-1) {};
    \node[shape=circle,draw=black, fill=blue] (73) at (7,3) {};
    \node[shape=circle,draw=black, fill=blue] (n13) at (-1,3) {};
    \node[shape=circle,draw=black, fill=red] (38) at (3,8) {};
    \node[shape=circle,draw=black, fill=red] (3n2) at (3,-2) {};
    \node[shape=circle,draw=black, fill=red] (83) at (8,3) {};
    \node[shape=circle,draw=black, fill=red] (n23) at (-2,3) {};
    \node[shape=circle,draw=black, fill=red] (77) at ({3+2.5*sqrt(2)},{3+2.5*sqrt(2)}) {};
    \node[shape=circle,draw=black, fill=red] (7n1) at ({3+2.5*sqrt(2)},{3-2.5*sqrt(2)}) {};
    \node[shape=circle,draw=black, fill=red] (n1n1) at ({3-2.5*sqrt(2)},{3-2.5*sqrt(2)}) {};
    \node[shape=circle,draw=black, fill=red] (n17) at ({3-2.5*sqrt(2)},{3+2.5*sqrt(2)}) {};
    \path (00) edge node {} (20);
    \path (20) edge node {} (40);
    \path (40) edge node {} (60);
    \path (06) edge node {} (26);
    \path (26) edge node {} (46);
    \path (46) edge node {} (66);
    \path (00) edge node {} (02);
    \path (02) edge node {} (04);
    \path (04) edge node {} (06);
    \path (60) edge node {} (62);
    \path (62) edge node {} (64);
    \path (64) edge node {} (66);
    \path (20) edge node {} (02);
    \path (04) edge node {} (26);
    \path (46) edge node {} (64);
    \path (62) edge node {} (40);
    \path (26) edge node {} (37);
    \path (37) edge node {} (38);
    \path (46) edge node {} (37);
    \path (20) edge node {} (3n1);
    \path (3n1) edge node {} (3n2);
    \path (40) edge node {} (3n1);
    \path (02) edge node {} (n13);
    \path (04) edge node {} (n13);
    \path (n13) edge node {} (n23);
    \path (62) edge node {} (73);
    \path (64) edge node {} (73);
    \path (73) edge node {} (83);
    \path (66) edge node {} (77);
    \path (7n1) edge node {} (60);
    \path (n1n1) edge node {} (00);
    \path (n17) edge node {} (06);
    \path (20) edge node {} (46);
    \path (40) edge node {} (26);
    \path (02) edge node {} (64);
    \path (04) edge node {} (62);
\end{tikzpicture}
\qquad \qquad \qquad 
\begin{tikzpicture}
    \node[shape=circle,draw=black, fill=red] (00) at (0,0) {};
    \node[shape=circle,draw=black, fill=blue] (10) at (2,0) {};
    \node[shape=circle,draw=black, fill=red] (20) at (4,0) {};
    \node[shape=circle,draw=black, fill=blue] (30) at (6,0) {};
    \node[shape=circle,draw=black, fill=red] (40) at (8,0) {};
    \node[shape=circle,draw=black, fill=blue] (01) at (0,2) {};
    \node[shape=circle,draw=black, fill=blue] (11) at (2,2) {};
    \node[shape=circle,draw=black, fill=blue] (21) at (4,2) {};
    \node[shape=circle,draw=black, fill=blue] (31) at (6,2) {};
    \node[shape=circle,draw=black, fill=blue] (41) at (8,2) {};
    \node[shape=circle,draw=black, fill=red] (02) at (0,4) {};
    \node[shape=circle,draw=black, fill=blue] (12) at (2,4) {};
    \node[shape=circle,draw=black, fill=blue] (32) at (6,4) {};
    \node[shape=circle,draw=black, fill=red] (42) at (8,4) {};
    \node[shape=circle,draw=black, fill=blue] (03) at (0,6) {};
    \node[shape=circle,draw=black, fill=blue] (13) at (2,6) {};
    \node[shape=circle,draw=black, fill=blue] (23) at (4,6) {};
    \node[shape=circle,draw=black, fill=blue] (33) at (6,6) {};
    \node[shape=circle,draw=black, fill=blue] (43) at (8,6) {};
    \node[shape=circle,draw=black, fill=red] (04) at (0,8) {};
    \node[shape=circle,draw=black, fill=blue] (14) at (2,8) {};
    \node[shape=circle,draw=black, fill=red] (24) at (4,8) {};
    \node[shape=circle,draw=black, fill=blue] (34) at (6,8) {};
    \node[shape=circle,draw=black, fill=red] (44) at (8,8) {};
    \node at (0,-1) {};
    \path (00) edge node {} (10);
    \path (10) edge node {} (20);
    \path (20) edge node {} (30);
    \path (30) edge node {} (40);
    
    \path (01) edge node {} (11);
    \path (11) edge node {} (21);
    \path (21) edge node {} (31);
    \path (31) edge node {} (41);
    
    \path (02) edge node {} (12);
    \path (32) edge node {} (42);
    
    \path (03) edge node {} (13);
    \path (13) edge node {} (23);
    \path (23) edge node {} (33);
    \path (33) edge node {} (43);
    
    \path (04) edge node {} (14);
    \path (14) edge node {} (24);
    \path (24) edge node {} (34);
    \path (34) edge node {} (44);
    
    \path (01) edge node {} (02);
    \path (02) edge node {} (03);
    \path (03) edge node {} (04);
    \path (00) edge node {} (01);
    
    \path (11) edge node {} (12);
    \path (12) edge node {} (13);
    \path (13) edge node {} (14);
    \path (10) edge node {} (11);
    
    \path (21) edge node {} (20);
    \path (23) edge node {} (24);
    
    \path (31) edge node {} (32);
    \path (32) edge node {} (33);
    \path (33) edge node {} (34);
    \path (30) edge node {} (31);
    
    \path (41) edge node {} (42);
    \path (42) edge node {} (43);
    \path (43) edge node {} (44);
    \path (40) edge node {} (41);
\end{tikzpicture} 
}\\
\phantom{-} \\
\scalebox{0.3}{
\begin{tikzpicture}
    \node[shape=circle,draw=black, fill=red] (00) at (0,0) {};
    \node[shape=circle,draw=black, fill=red] (10) at (2,0) {};
    \node[shape=circle,draw=black, fill=red] (20) at (4,0) {};
    \node[shape=circle,draw=black, fill=red] (01) at (0,2) {};
    \node[shape=circle,draw=black, fill=blue] (11) at (2,2) {};
    \node[shape=circle,draw=black, fill=blue] (21) at (4,2) {};
    \node[shape=circle,draw=black, fill=red] (02) at (0,4) {};
    \node[shape=circle,draw=black, fill=blue] (12) at (2,4) {};
    \node[shape=circle,draw=black, fill=blue] (22) at (4,4) {};
    \node[shape=circle,draw=black, fill=red] (30) at (6,0) {};
    \node[shape=circle,draw=black, fill=red] (31) at (6,2) {};
    \node[shape=circle,draw=black, fill=red] (32) at (6,4) {};
    \node[shape=circle,draw=black, fill=red] (n11) at (-1,-1) {};
    \node[shape=circle,draw=black, fill=red] (44) at (7,7) {};
    \node[shape=circle,draw=black, fill=red] (n14) at (-1,7) {};
    \node[shape=circle,draw=black, fill=red] (n41) at (7,-1) {};
    \node[shape=circle,draw=black, fill=blue] (0515) at (1,3) {};
    \node[shape=circle,draw=black, fill=blue] (2505) at (5,1) {};
    \node[shape=circle,draw=black, fill=blue] (0505) at (1,1) {};
    \node[shape=circle,draw=black, fill=blue] (2515) at (5,3) {};
    \node[shape=circle,draw=black, fill=blue] (1505) at (3,1) {};
    \node[shape=circle,draw=black, fill=blue] (2525) at (5,5) {};
    \node[shape=circle,draw=black, fill=blue] (1525) at (3,5) {};
    \node[shape=circle,draw=black, fill=blue] (0525) at (1,5) {};
    \node[shape=circle,draw=black, fill=red] (03) at (0,6) {};
    \node[shape=circle,draw=black, fill=red] (13) at (2,6) {};
    \node[shape=circle,draw=black, fill=red] (23) at (4,6) {};
    \node[shape=circle,draw=black, fill=red] (33) at (6,6) {};
    \node[shape=circle,draw=black, fill=blue] (1515) at (3,3) {};
    \path (1515) edge node {} (11);
    \path (1515) edge node {} (21);
    \path (1515) edge node {} (12);
    \path (1515) edge node {} (22);
    \path (1515) edge node {} (1505);
    \path (1515) edge node {} (2515);
    \path (1515) edge node {} (0515);
    \path (1515) edge node {} (1525);
    \path (0505) edge node {} (1505);
    \path (1505) edge node {} (2505);
    \path (0505) edge node {} (0515);
    \path (2505) edge node {} (2515);
    \path (2515) edge node {} (2525);
    \path (0515) edge node {} (0525);
    \path (0525) edge node {} (1525);
    \path (1525) edge node {} (2525);
    \path (00) edge node {} (10);
    \path (10) edge node {} (20);
    \path (00) edge node {} (01);
    \path (01) edge node {} (11);
    \path (11) edge node {} (21);
    \path (02) edge node {} (12);
    \path (12) edge node {} (22);
    \path (01) edge node {} (02);
    \path (21) edge node {} (22);
    \path (20) edge node {} (21);
    \path (10) edge node {} (11);
    \path (11) edge node {} (12);
    \path (22) edge node {} (32);
    \path (21) edge node {} (31);
    \path (20) edge node {} (30);
    \path (30) edge node {} (31);
    \path (31) edge node {} (32);
    \path (20) edge node {} (2505);
    \path (30) edge node {} (2505);
    \path (21) edge node {} (2505);
    \path (31) edge node {} (2505);
    \path (10) edge node {} (1505);
    \path (20) edge node {} (1505);
    \path (11) edge node {} (1505);
    \path (21) edge node {} (1505);
    \path (01) edge node {} (0515);
    \path (11) edge node {} (0515);
    \path (02) edge node {} (0515);
    \path (12) edge node {} (0515);
    \path (00) edge node {} (0505);
    \path (10) edge node {} (0505);
    \path (01) edge node {} (0505);
    \path (11) edge node {} (0505);
    \path (21) edge node {} (2515);
    \path (31) edge node {} (2515);
    \path (22) edge node {} (2515);
    \path (32) edge node {} (2515);
    \path (02) edge node {} (0525);
    \path (03) edge node {} (0525);
    \path (12) edge node {} (0525);
    \path (13) edge node {} (0525);
    \path (12) edge node {} (1525);
    \path (13) edge node {} (1525);
    \path (22) edge node {} (1525);
    \path (23) edge node {} (1525);
    \path (22) edge node {} (2525);
    \path (23) edge node {} (2525);
    \path (32) edge node {} (2525);
    \path (33) edge node {} (2525);
    \path (00) edge node {} (n11);
    \path (30) edge node {} (n41);
    \path (03) edge node {} (n14);
    \path (33) edge node {} (44);
    \path (03) edge node {} (02);
    \path (13) edge node {} (12);
    \path (23) edge node {} (22);
    \path (33) edge node {} (32);
    \path (03) edge node {} (13);
    \path (13) edge node {} (23);
    \path (23) edge node {} (33);
    \node at (0,-2) {};
\end{tikzpicture}  \qquad  \qquad \qquad 
\begin{tikzpicture}
   \node[shape=circle,draw=black, fill=red] (00) at ({3-2*sqrt(2)},{3-2*sqrt(2)}) {};
    \node[shape=circle,draw=black, fill=blue] (20) at ({3+4*sin(45)/sin(112.5)*cos(11*180/8)},{3+4*sin(45)/sin(112.5)*sin(11*180/8)}) {};
    \node[shape=circle,draw=black, fill=blue] (40) at ({3+4*sin(45)/sin(112.5)*cos(13*180/8)},{3+4*sin(45)/sin(112.5)*sin(13*180/8)}) {};
    \node[shape=circle,draw=black, fill=blue] (02) at ({3+4*sin(45)/sin(112.5)*cos(9*180/8)},{3+4*sin(45)/sin(112.5)*sin(9*180/8)}) {};
    \node[shape=circle,draw=black, fill=blue] (04) at ({3+4*sin(45)/sin(112.5)*cos(7*180/8)},{3+4*sin(45)/sin(112.5)*sin(7*180/8)}) {};
    \node[shape=circle,draw=black, fill=red] (60) at ({3+2*sqrt(2)},{3-2*sqrt(2)}) {};
    \node[shape=circle,draw=black, fill=blue] (62) at ({3+4*sin(45)/sin(112.5)*cos(15*180/8)},{3+4*sin(45)/sin(112.5)*sin(15*180/8)}) {};
    \node[shape=circle,draw=black, fill=blue] (64) at ({3+4*sin(45)/sin(112.5)*cos(180/8)},{3+4*sin(45)/sin(112.5)*sin(180/8)}) {};
    \node[shape=circle,draw=black, fill=red] (06) at ({3-2*sqrt(2)},{3+2*sqrt(2)}) {};
    \node[shape=circle,draw=black, fill=blue] (26) at ({3+4*sin(45)/sin(112.5)*cos(5*180/8)},{3+4*sin(45)/sin(112.5)*sin(5*180/8)}) {};
    \node[shape=circle,draw=black, fill=blue] (46) at ({3+4*sin(45)/sin(112.5)*cos(3*180/8)},{3+4*sin(45)/sin(112.5)*sin(3*180/8)}) {};
    \node[shape=circle,draw=black, fill=red] (66) at ({3+2*sqrt(2)},{3+2*sqrt(2)}) {};
    \node[shape=circle,draw=black, fill=red] (37) at (3,7) {};
    \node[shape=circle,draw=black, fill=red] (3n1) at (3,-1) {};
    \node[shape=circle,draw=black, fill=red] (73) at (7,3) {};
    \node[shape=circle,draw=black, fill=red] (n13) at (-1,3) {};
    \node[shape=circle,draw=black, fill=red] (38) at (3,8) {};
    \node[shape=circle,draw=black, fill=red] (3n2) at (3,-2) {};
    \node[shape=circle,draw=black, fill=red] (83) at (8,3) {};
    \node[shape=circle,draw=black, fill=red] (n23) at (-2,3) {};
    \node[shape=circle,draw=black, fill=red] (77) at ({3+2.5*sqrt(2)},{3+2.5*sqrt(2)}) {};
    \node[shape=circle,draw=black, fill=red] (7n1) at ({3+2.5*sqrt(2)},{3-2.5*sqrt(2)}) {};
    \node[shape=circle,draw=black, fill=red] (n1n1) at ({3-2.5*sqrt(2)},{3-2.5*sqrt(2)}) {};
    \node[shape=circle,draw=black, fill=red] (n17) at ({3-2.5*sqrt(2)},{3+2.5*sqrt(2)}) {};
    \path (00) edge node {} (20);
    \path (20) edge node {} (40);
    \path (40) edge node {} (60);
    \path (06) edge node {} (26);
    \path (26) edge node {} (46);
    \path (46) edge node {} (66);
    \path (00) edge node {} (02);
    \path (02) edge node {} (04);
    \path (04) edge node {} (06);
    \path (60) edge node {} (62);
    \path (62) edge node {} (64);
    \path (64) edge node {} (66);
    \path (20) edge node {} (02);
    \path (04) edge node {} (26);
    \path (46) edge node {} (64);
    \path (62) edge node {} (40);
    \path (26) edge node {} (37);
    \path (37) edge node {} (38);
    \path (46) edge node {} (37);
    \path (20) edge node {} (3n1);
    \path (3n1) edge node {} (3n2);
    \path (40) edge node {} (3n1);
    \path (02) edge node {} (n13);
    \path (04) edge node {} (n13);
    \path (n13) edge node {} (n23);
    \path (62) edge node {} (73);
    \path (64) edge node {} (73);
    \path (73) edge node {} (83);
    \path (66) edge node {} (77);
    \path (7n1) edge node {} (60);
    \path (n1n1) edge node {} (00);
    \path (n17) edge node {} (06);
    \path (20) edge node {} (46);
    \path (40) edge node {} (26);
    \path (02) edge node {} (64);
    \path (04) edge node {} (62);
\end{tikzpicture}\qquad \qquad \qquad 
\begin{tikzpicture}
    \node[shape=circle,draw=black, fill=red] (00) at (0,0) {};
    \node[shape=circle,draw=black, fill=red] (10) at (2,0) {};
    \node[shape=circle,draw=black, fill=red] (20) at (4,0) {};
    \node[shape=circle,draw=black, fill=red] (30) at (6,0) {};
    \node[shape=circle,draw=black, fill=red] (40) at (8,0) {};
    \node[shape=circle,draw=black, fill=red] (01) at (0,2) {};
    \node[shape=circle,draw=black, fill=blue] (11) at (2,2) {};
    \node[shape=circle,draw=black, fill=red] (21) at (4,2) {};
    \node[shape=circle,draw=black, fill=blue] (31) at (6,2) {};
    \node[shape=circle,draw=black, fill=red] (41) at (8,2) {};
    \node[shape=circle,draw=black, fill=red] (02) at (0,4) {};
    \node[shape=circle,draw=black, fill=red] (12) at (2,4) {};
    \node[shape=circle,draw=black, fill=red] (32) at (6,4) {};
    \node[shape=circle,draw=black, fill=red] (42) at (8,4) {};
    \node[shape=circle,draw=black, fill=red] (03) at (0,6) {};
    \node[shape=circle,draw=black, fill=blue] (13) at (2,6) {};
    \node[shape=circle,draw=black, fill=red] (23) at (4,6) {};
    \node[shape=circle,draw=black, fill=blue] (33) at (6,6) {};
    \node[shape=circle,draw=black, fill=red] (43) at (8,6) {};
    \node[shape=circle,draw=black, fill=red] (04) at (0,8) {};
    \node[shape=circle,draw=black, fill=red] (14) at (2,8) {};
    \node[shape=circle,draw=black, fill=red] (24) at (4,8) {};
    \node[shape=circle,draw=black, fill=red] (34) at (6,8) {};
    \node[shape=circle,draw=black, fill=red] (44) at (8,8) {};
    \node at (0,-1) {};
    \path (00) edge node {} (10);
    \path (10) edge node {} (20);
    \path (20) edge node {} (30);
    \path (30) edge node {} (40);
    
    \path (01) edge node {} (11);
    \path (11) edge node {} (21);
    \path (21) edge node {} (31);
    \path (31) edge node {} (41);
    
    \path (02) edge node {} (12);
    \path (32) edge node {} (42);
    
    \path (03) edge node {} (13);
    \path (13) edge node {} (23);
    \path (23) edge node {} (33);
    \path (33) edge node {} (43);
    
    \path (04) edge node {} (14);
    \path (14) edge node {} (24);
    \path (24) edge node {} (34);
    \path (34) edge node {} (44);
    
    \path (01) edge node {} (02);
    \path (02) edge node {} (03);
    \path (03) edge node {} (04);
    \path (00) edge node {} (01);
    
    \path (11) edge node {} (12);
    \path (12) edge node {} (13);
    \path (13) edge node {} (14);
    \path (10) edge node {} (11);
    
    \path (21) edge node {} (20);
    \path (23) edge node {} (24);
    
    \path (31) edge node {} (32);
    \path (32) edge node {} (33);
    \path (33) edge node {} (34);
    \path (30) edge node {} (31);
    
    \path (41) edge node {} (42);
    \path (42) edge node {} (43);
    \path (43) edge node {} (44);
    \path (40) edge node {} (41);
\end{tikzpicture} 
}
    \caption{The CEJZ (top) and the Steinerberger (bottom) boundaries of three graphs, with boundary vertices shown in red.}
    \label{boundaryexamples}
\end{figure}
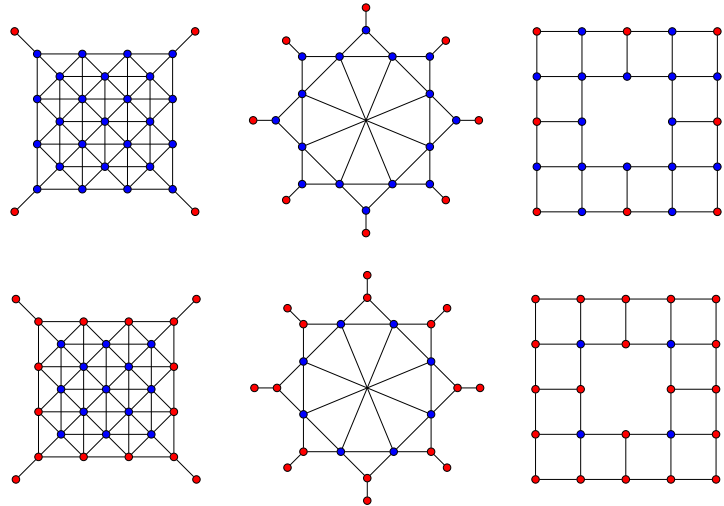

In the Chartrand, Erwin, Johns, and Zhang definition, a vertex $v$ is in the boundary of $G$ if there is a vertex $u$ such that no neighbor of $v$ is farther away from $u$ than $v$. 
However, when considering subsets in $\mathbb{R}^n$ with the Euclidean metric, many boundary points do not satisfy a condition analogous to this. Motivated by this, Steinerberger proposed an alternative notion, where $v$ is a boundary vertex if there is a vertex $u$ such that on average, the neighbors of $v$ are closer to $u$ than $v$ is. 
Examples of both boundaries are shown in \cref{boundaryexamples}.

Formally, the \emph{Chartrand-Erwin-Johns-Zhang (CEJZ) boundary} of a connected graph $G=(V,E)$ is
\begin{equation}
    (\partial G)'=\left\{v \in V\; \bigg\vert \;\exists\;u\in V \text{ such that } d(w,u) \leq d(v,u) \text{ for all } (v,w)\in E \right\}.
\end{equation}
The \emph{Steinerberger boundary} of a connected graph $G = (V, E)$ is defined as
\begin{equation}\label{partialG}
    \partial G = \bigg\{v \in V\; \bigg\vert \;\exists\;u\in V \text{ such that }\frac{1}{\text{deg}(v)}\sum_{(v, w) \in E} d(w,u) < d(v,u)\bigg\},
\end{equation}
where we use the convention $\partial G=V$ when $|V|=1$. We call any $u\in V$ establishing that $v\in \partial G$ or $v\in (\partial G)'$ a \emph{witness} for $v$. 
It is an immediate consequence of the definitions that the condition for $\partial G$ is a relaxation of the condition for $(\partial G)'$.

\begin{proposition}[Steinerberger \cite{steinerberger}, Proposition 1]\label{containment}
For any connected graph $G$, $(\partial G)'\subseteq \partial G$. 
\end{proposition}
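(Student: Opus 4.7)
The plan is to show that the same witness $u$ that works for $v$ in the CEJZ sense also works in the Steinerberger sense. Let $v \in (\partial G)'$ with witness $u \in V$. First I would dispose of the trivial case $|V|=1$: the convention $\partial G = V$ makes the containment automatic (and in fact $(\partial G)' = V$ as well, since the CEJZ condition is vacuously satisfied). So assume $|V| \geq 2$, and since $G$ is connected, $v$ has at least one neighbor.

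Next, I would observe that $v \neq u$: if $v = u$, then for any neighbor $w$ we would have $d(w,u) = 1 > 0 = d(v,u)$, violating the CEJZ condition. Hence $d(v,u) \geq 1$, and we can pick a shortest path from $v$ to $u$. The first vertex on this path after $v$ is a neighbor $w^* \in V$ of $v$ with $d(w^*, u) = d(v,u) - 1$.

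The key inequality then follows by splitting the sum. For every neighbor $w$ of $v$ the CEJZ condition gives $d(w,u) \leq d(v,u)$, and separating the contribution of $w^*$,
\begin{equation*}
\sum_{(v,w) \in E} d(w,u) \;=\; d(w^*, u) + \sum_{\substack{(v,w)\in E \\ w \neq w^*}} d(w,u) \;\leq\; \bigl(d(v,u) - 1\bigr) + \bigl(\deg(v) - 1\bigr)\, d(v,u).
\end{equation*}
Dividing by $\deg(v)$ yields
\begin{equation*}
\frac{1}{\deg(v)} \sum_{(v,w) \in E} d(w,u) \;\leq\; d(v,u) - \frac{1}{\deg(v)} \;<\; d(v,u),
\end{equation*}
so $u$ witnesses $v \in \partial G$.

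There is no serious obstacle here; the whole argument is driven by the single observation that a shortest-path neighbor of $v$ toward $u$ contributes a \emph{strict} decrease to the average, which is exactly the gap between the CEJZ condition (non-strict, for all neighbors) and the Steinerberger condition (strict, on average).
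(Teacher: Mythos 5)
Your proof is correct and takes the same route the paper intends: the paper dismisses this as an immediate consequence of the definitions (the Steinerberger condition being a relaxation of the CEJZ one), and your identification of the shortest-path neighbor $w^*$ contributing $d(v,u)-1$ is precisely the detail that turns ``non-strict for every neighbor'' into ``strict on average.'' The edge cases ($|V|=1$ and $v\neq u$) are handled correctly as well.
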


The CEJZ boundary has been studied in a variety of contexts, and we refer the reader to \cite{allgeier,  caceres,  caceres3, caceres2, chartrand2, hernando, kang, rodriguez, zejnilovic} for details. When discussing notions of boundary, one natural question is what sets minimize or maximize the size of the boundary. Our work will parallel prior work on characterizing graphs with small CEJZ boundary. The cases of $|(\partial G)'|=2$ or $|(\partial G)'|=3$ were classified by Hasegawa and Saito \cite{hasegawa}, and graphs with $|(\partial G)'|=4$ were classified by M\"{u}ller, P\'{o}r, and Sereni \cite{mullerfour}. The main result of our paper is the following theorem classifying graphs with Steinerberger boundary size at most 4.

\begin{theorem}\label{maintheorem}
Let $G=(V,E)$ be a connected graph. 
\begin{enumerate}[label=(\alph*)]
    \item $|\partial G|=2$ if and only if $G$ is a path graph with at least two vertices.
    \item $|\partial G|=3$ if and only if $G$ is a tree with three leaves or a tripod.
    \item $|\partial G|=4$ if and only if $G$ is a tree with four leaves or one of the graphs in \cref{partialG4} with paths of arbitrary lengths attached to boundary vertices $v$ with boundary stability number $\max_{u\in V} \sum_{w\in N(v)}[d(v,u)-d(w,u)]=1.$
\end{enumerate}
\end{theorem}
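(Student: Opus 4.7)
The plan is to prove \cref{maintheorem} in three stages. First, I would establish the baseline for trees: in any tree $T$, $\partial T$ coincides with the set of leaves. Every leaf admits its unique neighbor as a witness. For a non-leaf $v$ of degree $d\geq 2$ and any candidate witness $u$, exactly one neighbor of $v$ lies on the unique $v$--$u$ path (at distance $d(v,u)-1$ from $u$), while the other $d-1$ neighbors lie at distance $d(v,u)+1$; the resulting average is $d(v,u)+(d-2)/d \geq d(v,u)$, so the strict inequality in \eqref{partialG} fails. For $u=v$ the condition also fails trivially. This immediately yields the tree parts of (a), (b), and (c) in the ``if'' direction, and reduces (a)'s ``only if'' direction to showing that any non-path connected graph has at least three boundary vertices.

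Second, for the ``if'' direction of the non-tree families, I would prove a \emph{path attachment lemma}: if $v\in\partial G$ has boundary stability number $\max_{u}\sum_{w\in N(v)}[d(v,u)-d(w,u)]$ equal to $1$, then attaching a pendant path at $v$ produces a graph whose boundary is obtained from $\partial G$ by replacing $v$ with the new leaf at the far end of the path. The reason is that interior vertices of the attached path are non-boundary by the tree-type computation above, distances to vertices outside the path are unchanged, and the stability-number-one condition is precisely the threshold that preserves $v$'s boundary status throughout the attachment while preventing its stability from dropping to $0$. This reduces the ``if'' direction of the tripod case in (b) and of the five infinite families in (c) to a finite verification on each base graph in \cref{partialG4}.

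Third, for the ``only if'' directions of (b) and (c), I would iteratively prune pendant paths via the converse of the attachment lemma to obtain a \emph{core} of $G$ with the same boundary size and no removable pendant path. If the core is a tree, it has exactly $|\partial G|$ leaves. Otherwise the core contains a cycle, and I would use two structural facts: every peripheral vertex (one realizing the diameter) lies in $\partial G$, and on any induced cycle near-antipodal vertices serve as witnesses that force many cycle vertices into the boundary. Combining these with extremal bookkeeping on girth, diameter, and number of cycles constrains the core tightly enough to enumerate by hand: the only surviving non-tree cores are the tripod for (b) and the eight graphs in \cref{partialG4} for (c).

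The main obstacle will be this final enumeration for $|\partial G|=4$, where one must exclude all near-miss configurations---an extra chord, an additional short cycle, a branch vertex of the wrong degree---each of which typically contributes at least one unwanted boundary vertex, leaving very little slack below the threshold of $4$. Closely tied to this is the need to certify, for each core graph, precisely which of its boundary vertices have stability number equal to $1$ (and hence admit path attachments) versus stability number $\geq 2$ (which would allow the boundary to grow under attachment); this distinction is what makes the final statement of (c) sharp, and I expect it to require a careful graph-by-graph calculation rather than a uniform argument.
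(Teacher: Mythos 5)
Your first two stages line up with the paper's supporting results (the tree computation is Proposition~\ref{treeandleaves}, and your path attachment lemma is essentially Lemma~\ref{graphedgeattachment} and Theorem~\ref{edgeattachment}, via the fact that attaching an edge at $v$ decreases $\beta(v)$ by exactly $1$ and leaves all other stability numbers unchanged). But your third stage has a genuine gap: ``extremal bookkeeping on girth, diameter, and number of cycles constrains the core tightly enough to enumerate by hand'' is not an argument, and it is precisely where all the difficulty of the theorem lives. The key idea you are missing is the containment $(\partial G)'\subseteq \partial G$ (Proposition~\ref{containment}): it gives $|(\partial G)'|\le|\partial G|$, so a graph with $|\partial G|=k$ must already appear in the known classification of graphs with CEJZ boundary of size at most $k$ (Hasegawa--Saito for $2$ and $3$, M\"uller--P\'or--Sereni for $4$). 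The paper's entire ``only if'' direction is a sieve over those known lists: for each of the five infinite families $N_{a\times c}$, $X_{a\times c}$, $T_{a\times c}$, $D_{a\times c}$, $L_{a\times c}$ it exhibits, for all but finitely many parameters, an explicit sixth-vertex witness inside one of nine small subgraphs $G_1,\dots,G_9$ and transfers it to $G$ via Lemma~\ref{boundarysubgraph} / Corollary~\ref{boundarysubgraphcor} (which needs the hypotheses $N_H(v)=N_G(v)$ and $d_H(v,u)=d_G(v,u)$, a point your sketch does not address). Without that reduction you would in effect have to re-derive the M\"uller--P\'or--Sereni classification from scratch, and your proposed tools (peripheral vertices plus near-antipodal witnesses on induced cycles) do not obviously control graphs like the $N_{a\times c}$ or $D_{a\times c}$ families, which have only four peripheral-type boundary vertices in the CEJZ sense and whose extra Steinerberger boundary vertices arise from a local averaging effect, not from eccentricity.

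A secondary point: your explanation of why stability number $1$ is the right threshold is garbled. When $\beta_G(v)=1$ and you attach a pendant path, $\beta(v)$ \emph{does} drop to $0$, so $v$ leaves the boundary and is replaced by the new leaf; the count is preserved precisely because the drop happens, not because it is prevented. When $\beta_G(v)\ge 2$, $v$ stays in the boundary \emph{and} the new leaf joins it, so the boundary grows. Your stated conclusion (replacement of $v$ by the leaf) is correct, but the mechanism you give for it is backwards, and the correct mechanism is what you need for the pruning step in stage three: removing a pendant path can only keep $|\partial G|$ the same or decrease it by one, so the core argument requires tracking which of these occurs.
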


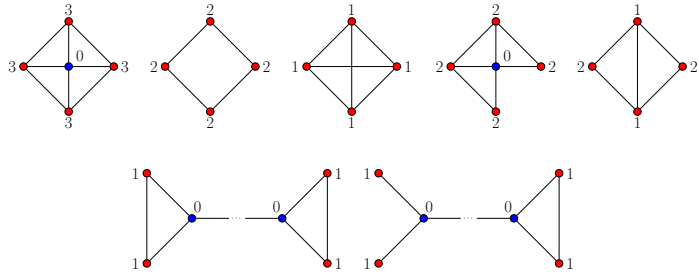
\begin{figure}[h!]
    \centering
    \scalebox{0.3}{\begin{tikzpicture}
    \node[shape=circle,draw=black, fill=red] (A) at (0,2) {};
    \node[shape=circle,draw=black, fill=red] (B) at (-2,0) {};
    \node[shape=circle,draw=black, fill=blue] (M) at (0,0) {};
    \node[shape=circle,draw=black, fill=red] (C) at (0,-2) {};
    \node[shape=circle,draw=black, fill=red] (D) at (2,0) {};
    \node at (0,2.5) {\Huge{$3$}};
    \node at (2.5,0) {\Huge{$3$}};
    \node at (0,-2.5) {\Huge{$3$}};
    \node at (-2.5,0) {\Huge{$3$}};
    \node at (0.5,0.5) {\Huge{$0$}};
    \path (A) edge node {} (M);
    \path (M) edge node {} (D);
    \path (A) edge node {} (D);
    \path (A) edge node {} (B);
    \path (B) edge node {} (C);
    \path (C) edge node {} (D);
    \path (B) edge node {} (M);
    \path (C) edge node {} (M);
\end{tikzpicture}\qquad 
\begin{tikzpicture}
    \node[shape=circle,draw=black, fill=red] (A) at (0,2) {};
    \node[shape=circle,draw=black, fill=red] (B) at (-2,0) {};
    \node[shape=circle,draw=black, fill=red] (C) at (0,-2) {};
    \node[shape=circle,draw=black, fill=red] (D) at (2,0) {};
\node at (0,2.5) {\Huge{$2$}};
    \node at (2.5,0) {\Huge{$2$}};
    \node at (0,-2.5) {\Huge{$2$}};
    \node at (-2.5,0) {\Huge{$2$}};
    \path (A) edge node {} (B);
    \path (A) edge node {} (D);
    \path (B) edge node {} (C);
    \path (C) edge node {} (D);
\end{tikzpicture}\qquad 
\begin{tikzpicture}
    \node[shape=circle,draw=black, fill=red] (A) at (0,2) {};
    \node[shape=circle,draw=black, fill=red] (B) at (-2,0) {};
    \node[shape=circle,draw=black, fill=red] (C) at (0,-2) {};
    \node[shape=circle,draw=black, fill=red] (D) at (2,0) {};
\node at (0,2.5) {\Huge{$1$}};
    \node at (2.5,0) {\Huge{$1$}};
    \node at (0,-2.5) {\Huge{$1$}};
    \node at (-2.5,0) {\Huge{$1$}};
    \path (A) edge node {} (B);
    \path (A) edge node {} (C);
    \path (A) edge node {} (D);
    \path (B) edge node {} (C);
    \path (B) edge node {} (D);
    \path (C) edge node {} (D);
\end{tikzpicture} \qquad 
\begin{tikzpicture}
    \node[shape=circle,draw=black, fill=red] (A) at (0,2) {};
    \node[shape=circle,draw=black, fill=red] (B) at (-2,0) {};
    \node[shape=circle,draw=black, fill=blue] (M) at (0,0) {};
    \node[shape=circle,draw=black, fill=red] (C) at (0,-2) {};
    \node[shape=circle,draw=black, fill=red] (D) at (2,0) {};
\node at (0,2.5) {\Huge{$2$}};
    \node at (2.5,0) {\Huge{$2$}};
    \node at (0,-2.5) {\Huge{$2$}};
    \node at (-2.5,0) {\Huge{$2$}};
    \node at (0.5,0.5) {\Huge{$0$}};
    \path (A) edge node {} (M);
    \path (M) edge node {} (D);
    \path (A) edge node {} (D);
    \path (A) edge node {} (B);
    \path (B) edge node {} (C);
    \path (B) edge node {} (M);
    \path (C) edge node {} (M);
\end{tikzpicture}\qquad 
\begin{tikzpicture}
    \node[shape=circle,draw=black, fill=red] (A) at (0,2) {};
    \node[shape=circle,draw=black, fill=red] (B) at (-2,0) {};
    \node[shape=circle,draw=black, fill=red] (C) at (0,-2) {};
    \node[shape=circle,draw=black, fill=red] (D) at (2,0) {};
\node at (0,2.5) {\Huge{$1$}};
    \node at (2.5,0) {\Huge{$2$}};
    \node at (0,-2.5) {\Huge{$1$}};
    \node at (-2.5,0) {\Huge{$2$}};
    \path (A) edge node {} (B);
    \path (A) edge node {} (C);
    \path (A) edge node {} (D);
    \path (B) edge node {} (C);
    \path (C) edge node {} (D);
\end{tikzpicture}}\\
\phantom{-}\\
\scalebox{0.3}{\begin{tikzpicture}
    \node[shape=circle,draw=black, fill=red] (A) at (-4,2) {};
    \node[shape=circle,draw=black, fill=red] (D) at (-4,-2) {};
    \node[shape=circle,draw=black, fill=blue] (M1) at (-2,0) {};
    \node (P) at (0,0) {$\cdots$};
    \node[shape=circle,draw=black, fill=blue] (M2) at (2,0) {};
    \node[shape=circle,draw=black, fill=red] (C) at (4,-2) {};
    \node[shape=circle,draw=black, fill=red] (B) at (4,2) {};
    \node at (-4.5,2) {\Huge{$1$}};
    \node at (4.5,2) {\Huge{$1$}};
    \node at (-4.5,-2) {\Huge{$1$}};
    \node at (4.5,-2) {\Huge{$1$}};
    \node at (1.75,0.5) {\Huge{$0$}};
    \node at (-1.75,0.5) {\Huge{$0$}};
    \path (A) edge node {} (M1);
    \path (M1) edge node {} (D);
    \path (A) edge node {} (D);
    \path (B) edge node {} (C);
    \path (B) edge node {} (M2);
    \path (C) edge node {} (M2);
    \path (M1) edge node {} (P);
    \path (P) edge node {} (M2);
\end{tikzpicture}\qquad
\begin{tikzpicture}
    \node[shape=circle,draw=black, fill=red] (A) at (-4,2) {};
    \node[shape=circle,draw=black, fill=red] (D) at (-4,-2) {};
    \node[shape=circle,draw=black, fill=blue] (M1) at (-2,0) {};
    \node (P) at (0,0) {$\cdots$};
    \node[shape=circle,draw=black, fill=blue] (M2) at (2,0) {};
    \node[shape=circle,draw=black, fill=red] (C) at (4,-2) {};
    \node[shape=circle,draw=black, fill=red] (B) at (4,2) {};
    \node at (-4.5,2) {\Huge{$1$}};
    \node at (4.5,2) {\Huge{$1$}};
    \node at (-4.5,-2) {\Huge{$1$}};
    \node at (4.5,-2) {\Huge{$1$}};
    \node at (1.75,0.5) {\Huge{$0$}};
    \node at (-1.75,0.5) {\Huge{$0$}};
    \path (A) edge node {} (M1);
    \path (M1) edge node {} (D);
    \path (B) edge node {} (C);
    \path (B) edge node {} (M2);
    \path (C) edge node {} (M2);
    \path (M1) edge node {} (P);
    \path (P) edge node {} (M2);
\end{tikzpicture}}
\caption{Non-tree graphs with $|\partial G|=4$, where $\partial G$ is shown in red. The boundary stability number is given beside each vertex.}
\label{partialG4}
\end{figure}

Note that a \emph{tripod} is a graph formed by starting with the complete graph on three vertices and attaching a path of arbitrary length (possibly 0) to each vertex. Additionally, the boundary stability number is a new parameter that we introduce to establish this theorem. For any $v\in \partial G$, this parameter is an integer that measures how stable the condition in \cref{partialG} is under certain operations. It allows us to more accurately describe what happens when we build larger graphs from smaller ones, which is one of the techniques used in the classification of graphs with small CEJZ boundary.

In Euclidean space, regions with larger volume typically have larger boundaries. One formal statement of this is the isoperimetric 
inequality, which gives a lower bound on the surface area or perimeter of a region in terms of its volume. Steinerberger established a corresponding isoperimetric inequality for $\partial G$ in \cite[Theorem 1]{steinerberger} given by
\[|\partial G|\geq \frac{1}{2\Delta}\cdot \frac{|V|}{\diam(G)},\]
where $\Delta$ is the maximum degree of $G$. Hence, graphs with a large number of vertices have more boundary vertices unless these graphs contain large paths. We see that the results of Theorem \ref{maintheorem} are consistent with the above isoperimetric inequality for $\partial G$. 

Note that the CEJZ boundary does not exhibit this general behavior. From generalizations of the first graph in \cref{boundaryexamples} and other grid-like graphs, $(\partial G)'$ can remain very small for very large graphs that are not formed from paths. A lower bound on $|(\partial G)'|$ in terms of the maximum degree was previously studied in \cite{mullerbound}, where the authors established a bound that is logarithmic with respect to $\Delta$ and showed that this bound is sharp up to constants. However, this bound is entirely independent of $|V|$.

We start in \cref{preliminaries} by summarizing previous results for the CEJZ and Steinerberger boundaries. In particular, we will summarize the characterizations of graphs with small CEJZ boundary. In \cref{stability}, we will introduce the boundary stability number of a vertex and establish several lemmas. We then apply these results in \cref{boundarysize4} to prove \cref{maintheorem}. In \cref{largeboundary}, we apply some of our results to describe some graphs with large Steinerberger boundary. We then conclude with open questions in \cref{futurework}.

\section{Preliminaries}\label{preliminaries}

In this section, we outline necessary definitions and notation, and we then summarize previous results for both the CEJZ and Steinerberger boundaries. Throughout this paper, all graphs are assumed to be simple and undirected. We assume basic familiarity with these graphs and refer the reader to \cite{bona} or \cite{bondy} for this information. We summarize the definitions and notation relevant for this paper. 

Graphs will be denoted $G=(V,E)$. An edge in $E$ will be denoted as a pair, such as $(v,w)$. Note that graphs in this paper are undirected, so this is equivalent to $(w,v)$. We use $N_G(v)$ to denote the set of neighbors of $v$ in the graph $G$ and $\deg_G(v)$ to denote the degree of $v$ in $G$. 

For any graph $G=(V,E)$, a \emph{walk} is a sequence of vertices $W=v_0v_1\ldots v_\ell$ such that $(v_i,v_{i+1})\in E$ for $i=0,1,\ldots,\ell-1$. We call $v_0$ the \emph{initial vertex} and $v_\ell$ the \emph{terminal vertex} of $W$. We also call $\ell$ the \emph{length} of the walk $W$. 
A walk is a \emph{path} if all vertices are distinct, and a \emph{shortest} path from $v$ to $w$ is a path of minimum length with $v$ as its initial vertex and $w$ as its terminal vertex. A walk is \emph{closed} if $v_0=v_{\ell}$, and a closed walk is called a \emph{cycle} if no vertices are repeated except for $v_0=v_{\ell}$. 

A graph $G=(V,E)$ is \emph{connected} if for every $v,w\in V$, there exists a path with $v$ as its initial vertex and $w$ as its terminal vertex. The \emph{connected components} of $G$ are the maximal connected subgraphs of $G$. For a connected graph $G=(V,E)$, a vertex $v\in V$ is a \emph{cut vertex} if $G-v$, the graph obtained by deleting $v$ and all edges incident to it, is not connected.

For a connected graph $G=(V,E)$ and any $v,w\in V$, the \emph{distance} from $v$ to $w$, denoted $d_G(v,w)$, is the length of a shortest path with $v$ as its initial vertex and $w$ as its terminal vertex. It is straightforward to verify that $d_G$ satisfies the properties of a metric. For any $v\in V$, the \emph{eccentricity} of $v$ is
$\ecc_G(v)=\max_{w\in V} \{d_G(v,w)\}$. Note that a vertex has eccentricity $1$ if and only if it is a universal vertex, i.e., a vertex adjacent to all other vertices. The \emph{diameter} of a graph is $\diam(G)=\max_{v\in V} \{\ecc_G(v)\}$.
If $v\in V$ satisfies $\ecc_G(v)=\diam(G)$, then we call $v$ a \emph{peripheral} vertex. When the context is clear, we omit the subscript $G$ in $N_G,\deg_G,d_G$, and $\ecc_G$. An example of these definitions is given in \cref{graphexample}.

\begin{figure}[h!]
\centering
    \scalebox{0.4}{\begin{tikzpicture}
    \node[shape=circle,draw=black, fill=blue] (A) at (0,2) {};
    \node[shape=circle,draw=black, fill=blue] (B) at (-2,0) {};
    \node[shape=circle,draw=black, fill=blue] (C) at (0,-2) {};
    \node[shape=circle,draw=black, fill=blue] (D) at (2,0) {};
    \node[shape=circle,draw=black, fill=blue] (2) at (4,0) {};
    \node[shape=circle,draw=black, fill=blue] (3) at (6,0) {};
    \node[shape=circle,draw=black, fill=blue] (4) at (8,0) {};
    \node[shape=circle,draw=black, fill=blue] (5) at (10,0) {};
    \node[shape=circle,draw=black, fill=blue] (6) at (8,2) {};
    \node[shape=circle,draw=black, fill=blue] (7) at (8,-2) {};
    \path (A) edge node {} (B);
    \path (A) edge node {} (C);
    \path (A) edge node {} (D);
    \path (B) edge node {} (C);
    \path (B) edge node {} (D);
    \path (C) edge node {} (D);
    \path (D) edge node {} (2);
    \path (2) edge node {} (3);
    \path (3) edge node {} (4);
    \path (4) edge node {} (5);
    \path (4) edge node {} (6);
    \path (4) edge node {} (7);
    \node at (-2.75,0) {\Huge{$v_1$}};
    \node at (0,2.75) {\Huge{$v_2$}};
    \node at (0,-2.75) {\Huge{$v_3$}};
    \node at (2,-0.75) {\Huge{$v_4$}};
    \node at (4,-0.75) {\Huge{$v_5$}};
    \node at (6,-0.75) {\Huge{$v_6$}};
    \node at (8,2.75) {\Huge{$v_7$}};
    \node at (8.75,-0.75) {\Huge{$v_8$}};
    \node at (8,-2.75) {\Huge{$v_9$}};
    \node at (11,0) {\Huge{$v_{10}$}};
\end{tikzpicture}}
\caption{A graph $G$ with $\diam(G)=5$, where $v_1,v_2,v_3,v_7,v_9$, and $v_{10}$ are all peripheral vertices. The vertices $v_4,v_5,v_6$, and $v_8$ are cut vertices, as removing any of them disconnects $G$.}
\label{graphexample}
\end{figure}
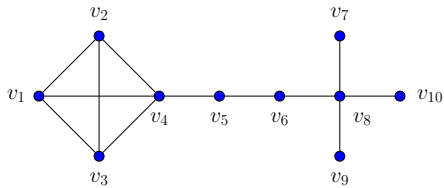

We now summarize previous results. We start with the following  observation, which is used throughout the study of both the CEJZ and Steinerberger boundaries.

\begin{observation}\label{peripheral}
For any connected graph $G=(V,E)$, peripheral vertices are always in $(\partial G)'$. Combined with \cref{containment}, this implies that peripheral vertices are always in $\partial G$. 
\end{observation}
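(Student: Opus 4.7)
The plan is to prove the stronger claim that peripheral vertices lie in $(\partial G)'$, and then invoke \cref{containment} to conclude membership in $\partial G$. Let $v \in V$ be a peripheral vertex, so by definition $\ecc(v) = \diam(G)$. Since $V$ is finite and nonempty, some vertex $u \in V$ realizes this maximum, i.e., satisfies $d(v,u) = \ecc(v) = \diam(G)$. I would take this $u$ as the candidate witness.

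With this choice, verifying the CEJZ condition is immediate: for every neighbor $w \in N(v)$, the definition of diameter forces
\[
d(w,u) \;\leq\; \diam(G) \;=\; d(v,u),
\]
so $u$ witnesses $v \in (\partial G)'$. Applying \cref{containment} then places $v$ in $\partial G$ as well.

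There is no real obstacle in this argument; it is a one-line consequence of the definitions of eccentricity and diameter together with the containment $(\partial G)' \subseteq \partial G$. The only subtlety worth flagging is that the Steinerberger condition asks for a strict inequality of averages, so a direct proof bypassing $(\partial G)'$ would require more care (it would need at least one neighbor $w$ with $d(w,u) < d(v,u)$, which holds because any shortest $v$--$u$ path has a first edge leading to such a neighbor). Routing through $(\partial G)'$ avoids that mild complication entirely.
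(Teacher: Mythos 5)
Your proof is correct and is exactly the standard argument the paper has in mind (the paper states this as an observation without proof): a farthest vertex $u$ from a peripheral $v$ satisfies $d(v,u)=\diam(G)$, so no vertex, in particular no neighbor of $v$, can be farther from $u$, giving $v\in(\partial G)'$ and hence $v\in\partial G$ by \cref{containment}. Nothing further is needed.
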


For $\partial G$, the following results were established by Steinerberger in \cite{steinerberger}. The first result immediately implies part (a) of \cref{maintheorem}. Hence, it remains to show parts (b) and (c).

\begin{proposition}[Steinerberger \cite{steinerberger}, Proposition 3]\label{boundary2}
For any connected graph with at least
two vertices, we have $|\partial G|\geq  2$. If $|\partial G|=2$, then $G$ is a path.
\end{proposition}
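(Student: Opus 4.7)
The bound $|\partial G| \geq 2$ is immediate from \cref{peripheral}: if $u,v \in V$ achieve $d(u,v) = D := \diam(G)$ then both are peripheral and hence lie in $\partial G$. For the characterization, I assume $|\partial G| = 2$ and write $\partial G = \{u,v\}$; the argument above forces $u, v$ to be peripheral with $d(u,v) = D$. My plan is first to reduce to the situation in which every vertex lies on a shortest $uv$-path, then to show each level $L_i := \{x : d(x,u) = i\}$ is a singleton, which forces $G$ to be the path $u = z_0 - z_1 - \cdots - z_D = v$.

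For the first reduction, I set $f(x) := d(x,u) + d(x,v)$, so $f \geq D$ with equality on $\{u,v\}$. For any non-boundary $x$, summing the defining inequalities for the witnesses $u$ and $v$ gives
\[ \sum_{y \in N(x)} f(y) \;\geq\; \deg(x)\, f(x), \]
so $f$ is subharmonic off $\partial G$. The discrete maximum principle then forces $\max_V f = \max_{\partial G} f = D$, since otherwise subharmonicity would propagate the value $\max f > D$ from a non-boundary vertex through its neighbors until reaching $\partial G$ by connectedness. Combined with $f \geq D$, this gives $f \equiv D$, so every vertex lies on a shortest $uv$-path. In particular $L_0 = \{u\}$, $L_D = \{v\}$, and every edge joins vertices in the same or adjacent levels.

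For the second reduction, I fix a non-boundary $x \in L_i$ with $1 \le i \le D-1$ and let $a_x, b_x, c_x$ count its neighbors in $L_{i-1}, L_i, L_{i+1}$. Applying the defining inequality at $x$ with witness $u$ yields $c_x \geq a_x$, while witness $v$ (together with $f \equiv D$) yields $a_x \geq c_x$, so $a_x = c_x$; also $a_x \geq 1$ since any shortest $x$-$u$ path begins with a step into $L_{i-1}$. I then aim to show $|L_i| = 1$ for all $i$ by induction: if $L_i = \{z_i\}$ for some $1 \le i \le D-1$ and $|L_{i-1}| = 1$, then $a_{z_i} \leq |L_{i-1}| = 1$ forces $c_{z_i} = a_{z_i} = 1$, so $z_i$ has a unique neighbor in $L_{i+1}$, which must coincide with every vertex of $L_{i+1}$ (each adjacent to $z_i$). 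Hence $|L_{i+1}| = 1$.

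The main obstacle is the base case $|L_1| = 1$, since the scheme above stalls at $z_0 = u \in \partial G$. To close this gap, I would suppose $|L_1| \ge 2$ and pick distinct $x_1, x_2 \in L_1$. Testing the defining inequality at $x_1$ with witness $x_2$, and if needed at the unique $L_2$-neighbor $z_1$ of $x_1$ and along $z_1$'s strand toward $v$, one propagates increasingly rigid distance equalities of the form $d(\,\cdot\,, x_2) = (\text{level} + 1)$ through successive levels. Eventually either a non-boundary vertex fails its own inequality, or some vertex is forced to have eccentricity $D$ and hence lie in $\partial G$, contradicting $\partial G = \{u,v\}$. Once $|L_1| = 1$ is in hand, the induction completes to $|L_i| = 1$ for every $i$, so $|V| = D + 1$ and $G$ is a path from $u$ to $v$.
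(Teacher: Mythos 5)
The paper quotes this proposition from Steinerberger's paper without reproving it, so there is no in-paper argument to compare against; I am judging your attempt on its own terms. Most of it is correct and cleanly executed: the lower bound $|\partial G|\geq 2$ via peripheral vertices, the observation that a non-boundary vertex $x$ satisfies $\sum_{y\in N(x)}d(y,z)\geq \deg(x)\,d(x,z)$ for \emph{every} $z\in V$, the maximum-principle deduction that $d(\cdot,u)+d(\cdot,v)\equiv D$ (so every vertex lies on a $u$--$v$ geodesic), the level identities $a_x=c_x\geq 1$ for interior vertices obtained from the two witnesses $u$ and $v$, and the two-step induction that collapses all levels to singletons once $|L_0|=|L_1|=1$ is known. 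All of that holds up under checking.

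The genuine gap is exactly where you place it: the base case $|L_1|=1$ (equivalently $\deg(u)=1$) is never proved. What you offer is a plan --- ``one propagates increasingly rigid distance equalities\dots\ eventually either a non-boundary vertex fails its own inequality, or some vertex is forced to have eccentricity $D$'' --- and neither branch of that dichotomy is derived. The step is not routine. For example, if $x_1,x_2\in L_1$ are adjacent, $\deg(x_1)=3$ with neighbors $u$, $x_2$, and its unique $L_2$-neighbor $y_1$, and $d(y_1,x_2)=2$, then $\beta(x_1,x_2)=(1-1)+(1-0)+(1-2)=0$, so the witness $x_2$ gives you nothing and you must push into $L_2$ and beyond; if instead $d(x_1,x_2)=2$ you only extract the equality $d(y_1,x_2)=3$ and again must continue. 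The configurations this step has to kill are precisely the ``thick'' ones, e.g.\ two internally disjoint $u$--$v$ geodesics, where the eventual contradiction is a hidden diametral pair $x_1\in L_1$, $w\in L_{D-1}$; exhibiting such a third boundary vertex in general, with intra-level edges and unequal level sizes allowed, is the actual content of the proposition. Until that propagation is carried out (or replaced, say, by a second maximum-principle argument applied to $d(\cdot,x_2)$ that provably produces a third element of $\partial G$), the proof is incomplete.
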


\begin{proposition}[Steinerberger \cite{steinerberger}, Proposition 2]\label{treeandleaves}
If $G$ is a tree, then $\partial G$ are the vertices of
degree $1$. For any connected graph, vertices of degree $1$ are in $\partial G$. 
\end{proposition}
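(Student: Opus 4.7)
The plan is to prove the two statements in turn, starting with the easier second statement. Suppose $v$ is a degree-$1$ vertex in some connected graph $G$, and let $w$ be its unique neighbor. I propose taking $u = w$ as the witness: the sum in \cref{partialG} collapses to the single term $d(w,w) = 0$, while $d(v,u) = d(v,w) = 1$, so the strict inequality $0 < 1$ holds and $v \in \partial G$. This handles every leaf in every connected graph simultaneously.

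For the first statement, \cref{peripheral} is irrelevant here; instead, combined with the above paragraph it suffices to show that in a tree $T$, no vertex $v$ of degree at least $2$ lies in $\partial T$. The key structural fact I intend to exploit is that in a tree every pair of vertices is connected by a unique path, which rigidly determines the distances from the neighbors of $v$ to any test vertex $u$. The plan is to fix an arbitrary $u \in V$ and show the inequality in \cref{partialG} fails. The case $u = v$ is immediate since every neighbor is at distance $1$ while $d(v,v) = 0$. For $u \neq v$, let $w^\ast \in N(v)$ be the unique neighbor lying on the (unique) shortest $v$-to-$u$ path, so that $d(w^\ast, u) = d(v,u) - 1$. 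For every other neighbor $w$, the unique path from $w$ to $u$ in $T$ must pass through $v$ (otherwise $T$ would contain a cycle), so $d(w,u) = d(v,u) + 1$.

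Summing these distances gives
\[
\sum_{w \in N(v)} d(w,u) \;=\; \bigl(d(v,u) - 1\bigr) + \bigl(\deg(v) - 1\bigr)\bigl(d(v,u) + 1\bigr) \;=\; \deg(v)\,d(v,u) + \deg(v) - 2,
\]
so the average equals $d(v,u) + 1 - \tfrac{2}{\deg(v)}$. Since $\deg(v) \geq 2$, this quantity is at least $d(v,u)$, contradicting strict inequality. As $u$ was arbitrary, $v \notin \partial T$.

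There is no real obstacle here; the only subtle point is that the argument uses the strict inequality in \cref{partialG} in an essential way, since at $\deg(v) = 2$ the average exactly equals $d(v,u)$ rather than exceeding it — this is precisely why interior vertices of a path graph are excluded from $\partial G$ and is consistent with \cref{boundary2}. The whole argument reduces to the observation that among the neighbors of an interior vertex of a tree, exactly one gets closer to $u$ while all others get farther, and the ``farther'' contributions dominate as soon as $\deg(v) \geq 2$.
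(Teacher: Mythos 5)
Your proof is correct. Note that the paper does not prove this proposition at all --- it is quoted from Steinerberger's paper as a known result --- so there is no in-paper argument to compare against; your argument (witness $u=w$ for a leaf, and for an internal tree vertex the observation that exactly one neighbor is closer to $u$ while all $\deg(v)-1$ others are farther, so the average is $d(v,u)+1-\tfrac{2}{\deg(v)}\geq d(v,u)$) is the standard one and is complete as written.
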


To establish the remaining parts of our main theorem, we will use the characterization of graphs with small CEJZ boundary, which we summarize here. We start with the results of Hasegawa and Saito characterizing graphs with CEJZ boundary size $2$ or $3$.

\begin{theorem}[Hasegawa and Saito \cite{hasegawa}, Theorem 7]\label{CEJZ2}
Let $G$  be a connected graph. If $|(\partial G)'|=2$, then $G$ is a path.
\end{theorem}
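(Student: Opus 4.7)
The plan is to prove the contrapositive: if $G$ is not a path, then $|(\partial G)'|\geq 3$. A useful companion to \cref{peripheral} is that every leaf of $G$ also lies in $(\partial G)'$. Indeed, if $\ell$ is a leaf with unique neighbor $w$, then every walk from $\ell$ to any $u\neq \ell$ must begin with the edge $\ell w$, so $d(w,u)=d(\ell,u)-1\leq d(\ell,u)$ and any $u\neq \ell$ serves as a witness for $\ell$. Hence leaves and peripheral vertices both always belong to $(\partial G)'$.

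The tree case is then immediate: a tree that is not a path has a vertex of degree at least three, hence at least three leaves, giving $|(\partial G)'|\geq 3$. It remains to show that $G$ cannot simultaneously contain a cycle and satisfy $|(\partial G)'|=2$. Suppose for contradiction $(\partial G)'=\{a,b\}$; since peripheral vertices lie in $(\partial G)'$ and any connected graph with at least two vertices has at least two peripheral vertices realizing a diameter pair, we conclude $d(a,b)=\diam(G)=:k$. The key additional tool is the following auxiliary lemma: if $v$ is a local maximum of eccentricity, meaning $\ecc(v)\geq \ecc(w)$ for every neighbor $w$ of $v$, then $v\in(\partial G)'$. To see this, pick $u$ with $d(v,u)=\ecc(v)$; for each neighbor $w$, $d(w,u)\leq \ecc(w)\leq \ecc(v)=d(v,u)$, so $u$ witnesses $v$. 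In particular, $\{a,b\}$ must contain every local maximum of $\ecc$ in $G$.

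To finish the cycle case I would use a cycle $C\subseteq G$ to force a third local maximum of $\ecc$. Since $|V(C)|\geq 3$, the cycle contains some vertex $v\notin\{a,b\}$. Following eccentricity-increasing walks through neighbors and exploiting the closed structure of $C$ (together with the universal inequality $d(v,a)+d(v,b)\geq k$, which arises from the BFS levels around $a$ and $b$), I would aim to locate a vertex in $V\setminus\{a,b\}$ at which $\ecc$ is locally maximum. Such a vertex lies in $(\partial G)'$ by the lemma above, contradicting $|(\partial G)'|=2$ and completing the argument.

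The main obstacle I anticipate is making the final step rigorous. Eccentricity can plateau along large segments of $G$, the cycle $C$ may itself pass through $a$ or $b$, and off-cycle neighbors with high eccentricity can deflect an uphill walk away from $C$. A careful analysis is needed, plausibly using a minimum-length cycle (which is induced, so its cycle-distances coincide with distances in $G$) together with level-set analysis of the BFS structures rooted at $a$ and $b$, in order to pin down a specific non-$\{a,b\}$ vertex whose eccentricity dominates that of all its neighbors.
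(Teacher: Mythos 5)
First, note that the paper does not prove this statement at all: it is imported verbatim as Theorem 7 of Hasegawa and Saito \cite{hasegawa}, so there is no internal proof to compare your attempt against. Judged on its own terms, your argument is incomplete, and the announced strategy for the remaining case cannot be made to work. The pieces you do establish are fine: leaves lie in $(\partial G)'$, a tree that is not a path has at least three leaves, and a local maximum of eccentricity lies in $(\partial G)'$ (witnessed by any vertex realizing its eccentricity). The problem is the cycle case, which you explicitly leave as a plan to ``force a third local maximum of $\ecc$.''

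That plan fails on concrete examples. Take $G=K_4$ minus one edge, say with vertex set $\{1,2,3,4\}$ and the edge $(1,2)$ deleted. Then $\ecc(1)=\ecc(2)=2$ while $\ecc(3)=\ecc(4)=1$, so the only local maxima of eccentricity are $1$ and $2$; each of $3$ and $4$ has a neighbor of strictly larger eccentricity. Nevertheless $|(\partial G)'|=4$ here, since $3$ is witnessed by $4$ and vice versa. Thus a connected graph that is not a path can have exactly two local eccentricity maxima, and no amount of ``eccentricity-increasing walks'' along a cycle will locate a third one; the additional boundary vertices arise from a different mechanism (for instance, the standard fact that for every $v$, any vertex $u$ maximizing $d(v,\cdot)$ lies in $(\partial G)'$ with witness $v$), and the actual Hasegawa--Saito argument requires a structural analysis that your outline does not supply. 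As written, the non-tree half of the theorem is unproved.
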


\begin{theorem}[Hasegawa and Saito \cite{hasegawa}, Theorem 9]\label{CEJZ3}
A connected graph $G$ has $|(\partial G)'|=3$ if and only if $G$ is either a tree with three leaves or a
tripod.
\end{theorem}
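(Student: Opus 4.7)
The plan is to prove both implications. For the forward direction, I would verify by direct computation. First, for any tree, the CEJZ boundary coincides with the set of leaves: a leaf $v$ with unique neighbor $w$ lies in $(\partial G)'$ with witness $w$, while for any non-leaf $v$ and any candidate witness $u\neq v$, uniqueness of paths in a tree forces at least one neighbor $w'$ of $v$ off the $v$-to-$u$ path to satisfy $d(w',u)=d(v,u)+1>d(v,u)$; with $u=v$ all neighbors have positive distance, so $v \notin (\partial G)'$. Hence a tree with exactly three leaves has $|(\partial G)'|=3$. For a tripod, the terminal vertex of each attached path is peripheral and lies in the boundary by \cref{peripheral}; any internal vertex of an attached path (degree $2$, tree-like) is excluded by the same argument as for trees; and each $K_3$ vertex either has an attached path of length $0$ (in which case it coincides with the terminal and is peripheral) or has a nontrivial attached path (in which case the first vertex of that path is strictly farther from every witness outside the corresponding branch, while any witness inside the branch forces a different $K_3$-neighbor to be strictly farther).

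For the converse, suppose $|(\partial G)'|=3$. If $G$ is a tree, then by the leaf characterization above $G$ has exactly three leaves. Otherwise $G$ contains a cycle. I would first show the girth of $G$ must be $3$: if the shortest cycle $C$ has length $\geq 4$, I aim to exhibit four distinct vertices in $(\partial G)'$ arising from the cycle's antipodal structure together with any peripheral or leaf vertex off the cycle, contradicting $|(\partial G)'|=3$. With a triangle $T=\{a,b,c\}$ in hand, I would then analyze the connected components of $G-E(T)$: each such component is attached to $T$ at one or more vertices, and I would show each must be a (possibly trivial) path attached at a single triangle vertex, by arguing that any branch vertex or secondary cycle inside a component produces an additional boundary vertex beyond the three outer endpoints already forced (using the tree argument on branches and \cref{peripheral} to pick up an extra peripheral vertex).

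The main obstacle will be the converse in the non-tree case, specifically ruling out girth $\geq 4$ and ruling out non-path branches attached to a triangle. The key technical ingredient should be a careful use of witnesses: peripherality of certain vertices yields them in $(\partial G)'$ for free by \cref{peripheral}, and the combinatorics of shortest paths along a cycle or through a branching subtree can be exploited to produce a fourth boundary vertex whenever $G$ is not already a tripod. A secondary subtlety is verifying that a component of $G-E(T)$ attached to $T$ at two or three triangle vertices simultaneously also forces an extra boundary vertex, which I would handle by reducing to the girth argument applied to a shortest cycle contained in $T \cup (\text{component})$.
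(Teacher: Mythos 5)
First, a point of comparison: the paper does not prove this statement at all --- \cref{CEJZ3} is imported verbatim from Hasegawa and Saito and used as a black box (e.g.\ in \cref{boundary3lemma}), so there is no in-paper proof to measure your attempt against. Judged on its own, your forward direction is essentially sound but contains one false claim: in a tripod the terminal vertices of the attached paths need \emph{not} be peripheral. For path lengths $2,1,0$ attached at $a_1,a_2,a_3$, the vertex $a_3$ has eccentricity $3$ while the diameter is $4$, so \cref{peripheral} does not apply to it; it still lies in $(\partial G)'$, but you must exhibit a witness directly (the tip of the longest path works, since $a_3$'s two neighbors $a_1,a_2$ are at distance $\leq d(a_3,u)$ from that tip). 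This is fixable, but as written the step is wrong.

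The converse is where the real content of the theorem lives, and there your proposal is a plan rather than a proof, with at least one reduction that demonstrably fails. The ``antipodal structure'' argument for girth $\geq 4$ does not immediately produce boundary vertices of $G$: a vertex antipodal to $u$ on a shortest cycle may have neighbors \emph{off} the cycle that are strictly farther from $u$, so the CEJZ condition can fail for every candidate witness; controlling those off-cycle neighbors is precisely the hard combinatorial work. More seriously, your fallback for components of $G-E(T)$ attached to the triangle $T$ at two or three vertices --- ``reduce to the girth argument applied to a shortest cycle in $T$ union the component'' --- breaks on $K_4$: there the component is a star attached at all three triangle vertices and every shortest cycle is again a triangle, so no girth-$\geq 4$ configuration ever appears, yet $|(\partial K_4)'|=4$ and $K_4$ must be excluded. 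The graphs in \cref{mullerthm}(c)--(h) (e.g.\ $N_{1\times 1}$, $X_{1\times c}$, $T_{1\times 1}$) are exactly the borderline cases your case analysis would have to eliminate one by one, and the proposal gives no mechanism that handles them. So the decisive direction is missing its key idea, not merely its details.
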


The characterization of graphs with CEJZ boundary size $4$ is significantly more complex. We start by defining several families of graphs and axis slice convex sets in $\mathbb{R}^2$.

\begin{definition}[M\"{u}ller, P\'{o}r, and Sereni, \cite{mullerfour}, Definition 2]
Let $a$ and $c$ be two positive integers. Define the vertex sets  
\[V_{a\times c}^0=\{(x,y)\in \mathbb{N}^2\, | \, 0\leq x\leq a \text{ and } 0\leq y\leq c\}\]
\[ V_{a\times c}^1=\left\{\left(x+\frac{1}{2},y+\frac{1}{2}\right)\, \bigg| \, (x,y)\in \mathbb{N}^2,0\leq x< a, \text{ and } 0\leq y< c\right\}.\]
\begin{enumerate}[label=(\alph*)]
    \item The grid graph $G_{a\times c}$ has vertex set $V_{a\times c}^0$ and edges between any vertices of Euclidean distance 1. Note that $|V_{a\times c}^0|=(a+1)(c+1)$.
    \item The graph $N_{a\times c}$ has vertex set $V_{a\times c}=V_{a\times c}^0\cup V_{a\times c}^1$ and edges between vertices of Euclidean distance at most $1$.
    \item For $a>2$, the graph $X_{a\times c}$ is the subgraph of $N_{(a-1)\times c}$ induced by \[V_{(a-1)\times c}\setminus \{(x,y)\in \mathbb{N}^2\, | \, 0<x<a-1 \text{ and } y\in \{0,c\}\}.\]
    If $a=2$, then $X_{2\times c}$ is the subgraph of $N_{1\times c}$ obtained by removing the edge between the vertices $(0,0)$ and $(1,0)$, and the edge between the vertices $(0,c)$ and $(1,c)$. If $a\geq 1$ and $c>1$, define $X_{a\times c}=X_{c\times a}$. Finally, define $X_{1\times 1}=K_4$.
    \item The graph $T_{a\times c}$ is the subgraph of $N_{a\times (c+1)}$ induced by 
    \[V_{a\times (c+1)}\setminus (\{(0,y)\, |\, y\in \mathbb{N}\}\cup \{(x,y)\, |\, x<a \text{ and } y\in \{0,c+1\}\}).\]
    \item Let $G_{a\times c}^1$ and $G_{a\times c}^2$ be two grid graphs with vertex sets respectively labeled by $v_{x,y}$ and $w_{x,y}$ for $0\leq x\leq a,0\leq y\leq c$. The graph $D_{a\times c}$ is obtained by identifying $v_{x,y}$ with $w_{x,y}$ whenever $x\in \{0,a\}$ or $y\in \{0,c\}$, adding an edge between $v_{x,y}$ and $w_{x,y}$ for $0\leq x\leq a,0\leq y\leq c$, and adding an edge between $v_{x+1,y}$ and $w_{x,y+1}$ whenever $0\leq x<a$ and $0\leq y<c$. 
    \item The graph $L_{a\times c}$ is obtained from $D_{a\times c}$ by removing the vertices $w_{x,y}$ for $0< x<a$ and $0< y<c$.
\end{enumerate}
\end{definition}

\begin{definition}[M\"{u}ller, P\'{o}r, and Sereni \cite{mullerfour}, Definition 3]
A set $W\subseteq \mathbb{R}^2$ is {axis slice convex} if 
\begin{itemize}
    \item whenever $(x_1,y),(x_2,y)\in W$ and $x_1<x_2$, then $(x,y)\in W$ for all $x$ in $\{x_1,x_1+1,\ldots,x_2\}$, and
    \item whenever $(x,y_1),(x,y_2)\in W$ and $y_1<y_2$, then $(x,y)\in W$ for all $y$ in $\{y_1,y_1+1,\ldots,y_2\}$. 
\end{itemize}
\end{definition}

An example of these definitions is shown in \cref{graphexamples}. We now state the main theorem of M\"{u}ller, P\'{o}r, and Sereni. 

\begin{figure}[h!]
    \centering
    \scalebox{0.4}{
    \begin{tikzpicture}
    \node[shape=circle,draw=black, fill=red] (00) at (0,0) {};
    \node[shape=circle,draw=black, fill=blue] (20) at (2,0) {};
    \node[shape=circle,draw=black, fill=blue] (02) at (0,2) {};
    \node[shape=circle,draw=black, fill=blue] (22) at (2,2) {};
    \node[shape=circle,draw=black, fill=blue] (40) at (4,0) {};
    \node[shape=circle,draw=black, fill=blue] (42) at (4,2) {};
    \node[shape=circle,draw=black, fill=red] (60) at (6,0) {};
    \node[shape=circle,draw=black, fill=blue] (62) at (6,2) {};
    \node[shape=circle,draw=black, fill=red] (04) at (0,4) {};
    \node[shape=circle,draw=black, fill=blue] (24) at (2,4) {};
    \node[shape=circle,draw=black, fill=blue] (44) at (4,4) {};
    \node[shape=circle,draw=black, fill=red] (64) at (6,4) {};
    \node[shape=circle,draw=black, fill=blue] (11) at (1,1) {};
    \node[shape=circle,draw=black, fill=blue] (31) at (3,1) {};
    \node[shape=circle,draw=black, fill=blue] (51) at (5,1) {};
    \node[shape=circle,draw=black, fill=blue] (13) at (1,3) {};
    \node[shape=circle,draw=black, fill=blue] (33) at (3,3) {};
    \node[shape=circle,draw=black, fill=blue] (53) at (5,3) {};
    \path (00) edge node {} (20);
    \path (20) edge node {} (40);
    \path (40) edge node {} (60);
    \path (02) edge node {} (22);
    \path (22) edge node {} (42);
    \path (42) edge node {} (62);
    \path (04) edge node {} (24);
    \path (24) edge node {} (44);
    \path (44) edge node {} (64);
    \path (00) edge node {} (02);
    \path (02) edge node {} (04);
    \path (20) edge node {} (22);
    \path (22) edge node {} (24);
    \path (40) edge node {} (42);
    \path (42) edge node {} (44);
    \path (60) edge node {} (62);
    \path (62) edge node {} (64);
    \path (00) edge node {} (11);
    \path (20) edge node {} (11);
    \path (02) edge node {} (11);
    \path (22) edge node {} (11);
    \path (20) edge node {} (31);
    \path (40) edge node {} (31);
    \path (22) edge node {} (31);
    \path (42) edge node {} (31);
    \path (40) edge node {} (51);
    \path (60) edge node {} (51);
    \path (42) edge node {} (51);
    \path (62) edge node {} (51);
    \path (02) edge node {} (13);
    \path (22) edge node {} (13);
    \path (04) edge node {} (13);
    \path (24) edge node {} (13);
    \path (22) edge node {} (33);
    \path (42) edge node {} (33);
    \path (24) edge node {} (33);
    \path (44) edge node {} (33);
    \path (42) edge node {} (53);
    \path (62) edge node {} (53);
    \path (44) edge node {} (53);
    \path (64) edge node {} (53);
    \path (11) edge node {} (31);
    \path (31) edge node {} (51);
    \path (13) edge node {} (33);
    \path (33) edge node {} (53);
    \path (11) edge node {} (13);
    \path (31) edge node {} (33);
    \path (51) edge node {} (53);
    \node at (0,-1) {};
    \node at (0,5) {};
    \end{tikzpicture}
    \qquad \qquad 
    \begin{tikzpicture}
    \node at (-1,0) {};
    \node at (5,0) {};
    \node[shape=circle,draw=black, fill=red] (00) at (0,0) {};
    \node[shape=circle,draw=black, fill=blue] (02) at (0,2) {};
    \node[shape=circle,draw=black, fill=blue] (22) at (2,2) {};
    \node[shape=circle,draw=black, fill=red] (40) at (4,0) {};
    \node[shape=circle,draw=black, fill=blue] (42) at (4,2) {};
    \node[shape=circle,draw=black, fill=red] (04) at (0,4) {};
    \node[shape=circle,draw=black, fill=red] (44) at (4,4) {};
    \node[shape=circle,draw=black, fill=blue] (11) at (1,1) {};
    \node[shape=circle,draw=black, fill=blue] (31) at (3,1) {};
    \node[shape=circle,draw=black, fill=blue] (13) at (1,3) {};
    \node[shape=circle,draw=black, fill=blue] (33) at (3,3) {};
    \path (02) edge node {} (22);
    \path (22) edge node {} (42);
    \path (00) edge node {} (02);
    \path (02) edge node {} (04);
    \path (40) edge node {} (42);
    \path (42) edge node {} (44);
    \path (00) edge node {} (11);
    \path (02) edge node {} (11);
    \path (22) edge node {} (11);
    \path (40) edge node {} (31);
    \path (22) edge node {} (31);
    \path (42) edge node {} (31);
    \path (02) edge node {} (13);
    \path (22) edge node {} (13);
    \path (04) edge node {} (13);
    \path (22) edge node {} (33);
    \path (42) edge node {} (33);
    \path (44) edge node {} (33);
    \path (11) edge node {} (31);
    \path (13) edge node {} (33);
    \path (11) edge node {} (13);
    \path (31) edge node {} (33);
    \node at (0,-1) {};
    \node at (0,5) {};
    \end{tikzpicture}
    \qquad \qquad 
    \begin{tikzpicture}
    \node at (-0.5,0) {};
    \node at (5.5,0) {};
    \node[shape=circle,draw=black, fill=red] (00) at (0,0) {};
    \node[shape=circle,draw=black, fill=blue] (20) at (2,0) {};
    \node[shape=circle,draw=black, fill=blue] (02) at (0,2) {};
    \node[shape=circle,draw=black, fill=blue] (22) at (2,2) {};
    \node[shape=circle,draw=black, fill=blue] (40) at (4,0) {};
    \node[shape=circle,draw=black, fill=blue] (42) at (4,2) {};
    \node[shape=circle,draw=black, fill=red] (60) at (5,-1) {};
    \node[shape=circle,draw=black, fill=red] (04) at (0,4) {};
    \node[shape=circle,draw=black, fill=blue] (24) at (2,4) {};
    \node[shape=circle,draw=black, fill=blue] (44) at (4,4) {};
    \node[shape=circle,draw=black, fill=red] (64) at (5,5) {};
    \node[shape=circle,draw=black, fill=blue] (11) at (1,1) {};
    \node[shape=circle,draw=black, fill=blue] (31) at (3,1) {};
    \node[shape=circle,draw=black, fill=blue] (51) at (5,1) {};
    \node[shape=circle,draw=black, fill=blue] (13) at (1,3) {};
    \node[shape=circle,draw=black, fill=blue] (33) at (3,3) {};
    \node[shape=circle,draw=black, fill=blue] (53) at (5,3) {};
    \path (00) edge node {} (20);
    \path (20) edge node {} (40);
    \path (40) edge node {} (60);
    \path (02) edge node {} (22);
    \path (22) edge node {} (42);
    \path (04) edge node {} (24);
    \path (24) edge node {} (44);
    \path (44) edge node {} (64);
    \path (00) edge node {} (02);
    \path (02) edge node {} (04);
    \path (20) edge node {} (22);
    \path (22) edge node {} (24);
    \path (40) edge node {} (42);
    \path (42) edge node {} (44);
    \path (00) edge node {} (11);
    \path (20) edge node {} (11);
    \path (02) edge node {} (11);
    \path (22) edge node {} (11);
    \path (20) edge node {} (31);
    \path (40) edge node {} (31);
    \path (22) edge node {} (31);
    \path (42) edge node {} (31);
    \path (40) edge node {} (51);
    \path (60) edge node {} (51);
    \path (42) edge node {} (51);
    \path (02) edge node {} (13);
    \path (22) edge node {} (13);
    \path (04) edge node {} (13);
    \path (24) edge node {} (13);
    \path (22) edge node {} (33);
    \path (42) edge node {} (33);
    \path (24) edge node {} (33);
    \path (44) edge node {} (33);
    \path (42) edge node {} (53);
    \path (44) edge node {} (53);
    \path (64) edge node {} (53);
    \path (11) edge node {} (31);
    \path (31) edge node {} (51);
    \path (13) edge node {} (33);
    \path (33) edge node {} (53);
    \path (11) edge node {} (13);
    \path (31) edge node {} (33);
    \path (51) edge node {} (53);
    \end{tikzpicture}}\\
    \phantom{-}\\
    \scalebox{0.4}{
    \begin{tikzpicture}
        \node[shape=circle,draw=black, fill=red] (00) at (0,0) {};
        \node[shape=circle,draw=black, fill=blue] (10) at (2,0) {};
        \node[shape=circle,draw=black, fill=blue] (01) at (0,2) {};
        \node[shape=circle,draw=black, fill=blue] (11) at (2,2) {};
        \node[shape=circle,draw=black, fill=blue] (11') at (2.5,2.5) {};
        \node[shape=circle,draw=black, fill=blue] (20) at (4,0) {};
        \node[shape=circle,draw=black, fill=blue] (21) at (4,2) {};
        
        \node[shape=circle,draw=black, fill=blue] (21') at (4.5,2.5) {};
        \node[shape=circle,draw=black, fill=red] (02) at (0,4) {};
        \node[shape=circle,draw=black, fill=blue] (12) at (2,4) {};
        \node[shape=circle,draw=black, fill=blue] (22) at (4,4) {};
        
        \node[shape=circle,draw=black, fill=red] (30) at (6,0) {};
        \node[shape=circle,draw=black, fill=blue] (31) at (6,2) {};
        \node[shape=circle,draw=black, fill=red] (32) at (6,4) {};
        
        \path (00) edge node {} (10);
        \path (20) edge node {} (10);
        \path (01) edge node {} (11);
        \path (21) edge node {} (11);
        \path (00) edge node {} (01);
        \path (10) edge node {} (11);
        \path (20) edge node {} (21);
        \path (20) edge node {} (30);
        \path (21) edge node {} (31);
        \path (30) edge node {} (31);
        \path (22) edge node {} (32);
        \path (32) edge node {} (31);
        
        \path (21') edge node {} (11');
        \path (22) edge node {} (21');
        \path (21') edge node {} (32);
        \path (21') edge node {} (31);
        
        \path (11) edge node {} (11');
        \path (21) edge node {} (21');
        \path (01) edge node {} (11');
        \path (10) edge node {} (11');
        \path (12) edge node {} (11');
        \path (21') edge node {} (20);
        
        \path (02) edge node {} (12);
        \path (12) edge node {} (11);
        \path (02) edge node {} (01);
        \path (21) edge node {} (22);
        \path (22) edge node {} (12);
        
        \path (22) edge node {} (11');
        
        \path (01) edge node {} (12);
        \path (00) edge node {} (11);
        \path (10) edge node {} (21);
        \path (20) edge node {} (31);
    \end{tikzpicture}
    \qquad \qquad 
        \begin{tikzpicture}
        \node[shape=circle,draw=black, fill=red] (00) at (0,0) {};
        \node[shape=circle,draw=black, fill=blue] (10) at (2,0) {};
        \node[shape=circle,draw=black, fill=blue] (01) at (0,2) {};
        \node[shape=circle,draw=black, fill=blue] (11) at (2,2) {}; {};
        \node[shape=circle,draw=black, fill=blue] (20) at (4,0) {};
        \node[shape=circle,draw=black, fill=blue] (21) at (4,2) {};
        
        \node[shape=circle,draw=black, fill=red] (02) at (0,4) {};
        \node[shape=circle,draw=black, fill=blue] (12) at (2,4) {};
        \node[shape=circle,draw=black, fill=blue] (22) at (4,4) {};
        
        \node[shape=circle,draw=black, fill=red] (30) at (6,0) {};
        \node[shape=circle,draw=black, fill=blue] (31) at (6,2) {};
        \node[shape=circle,draw=black, fill=red] (32) at (6,4) {};
        
        \path (00) edge node {} (10);
        \path (20) edge node {} (10);
        \path (01) edge node {} (11);
        \path (21) edge node {} (11);
        \path (00) edge node {} (01);
        \path (10) edge node {} (11);
        \path (20) edge node {} (21);
        \path (20) edge node {} (30);
        \path (21) edge node {} (31);
        \path (30) edge node {} (31);
        \path (22) edge node {} (32);
        \path (32) edge node {} (31);

        \path (02) edge node {} (12);
        \path (12) edge node {} (11);
        \path (02) edge node {} (01);
        \path (21) edge node {} (22);
        \path (22) edge node {} (12);
        
        \path (01) edge node {} (12);
        \path (00) edge node {} (11);
        \path (10) edge node {} (21);
        \path (20) edge node {} (31);
    \end{tikzpicture}
    \qquad \qquad 
    \begin{tikzpicture}
    \node[shape=circle,draw=black, fill=red] (00) at (0,0) {};
    \node[shape=circle,draw=black, fill=blue] (20) at (2,0) {};
    \node[shape=circle,draw=black, fill=blue] (02) at (0,2) {};
    \node[shape=circle,draw=black, fill=blue] (22) at (2,2) {};
    \node[shape=circle,draw=black, fill=blue] (40) at (4,0) {};
    \node[shape=circle,draw=black, fill=blue] (42) at (4,2) {};
    \node[shape=circle,draw=black, fill=red] (60) at (6,0) {};
    \node[shape=circle,draw=black, fill=blue] (62) at (6,2) {};
    \node[shape=circle,draw=black, fill=red] (04) at (0,4) {};
    \node[shape=circle,draw=black, fill=blue] (24) at (2,4) {};
    \node[shape=circle,draw=black, fill=blue] (44) at (4,4) {};
    \node[shape=circle,draw=black, fill=red] (64) at (6,4) {};
    \node[shape=circle,draw=black, fill=blue] (51) at (5,1) {};
    \node[shape=circle,draw=black, fill=blue] (13) at (1,3) {};
    \node[shape=circle,draw=black, fill=blue] (33) at (3,3) {};
    \path (00) edge node {} (20);
    \path (20) edge node {} (40);
    \path (40) edge node {} (60);
    \path (02) edge node {} (22);
    \path (22) edge node {} (42);
    \path (42) edge node {} (62);
    \path (04) edge node {} (24);
    \path (24) edge node {} (44);
    \path (44) edge node {} (64);
    \path (00) edge node {} (02);
    \path (02) edge node {} (04);
    \path (20) edge node {} (22);
    \path (22) edge node {} (24);
    \path (40) edge node {} (42);
    \path (42) edge node {} (44);
    \path (60) edge node {} (62);
    \path (62) edge node {} (64);
    \path (40) edge node {} (51);
    \path (60) edge node {} (51);
    \path (42) edge node {} (51);
    \path (62) edge node {} (51);
    \path (02) edge node {} (13);
    \path (22) edge node {} (13);
    \path (04) edge node {} (13);
    \path (24) edge node {} (13);
    \path (22) edge node {} (33);
    \path (42) edge node {} (33);
    \path (24) edge node {} (33);
    \path (44) edge node {} (33);
    \path (13) edge node {} (33);
    \end{tikzpicture}
    }
    \caption{The graphs $N_{3\times 2}$, $X_{3\times 2}$, and $T_{3\times 2}$ are shown in the top row. The graphs $D_{3\times 2}$, $L_{3\times 2}$, and a subgraph of $N_{3\times 2}$ induced by $V_{3\times 2}^0\cup (W\cap V_{3\times 2}^1)$ for some axis slice convex $W\subseteq \mathbb{R}^2$ are shown in the second row. Vertices in $(\partial G)'$ are shown in red. Observe that in each case, $\partial G\setminus (\partial G)'\neq \emptyset$}
    \label{graphexamples}
\end{figure}
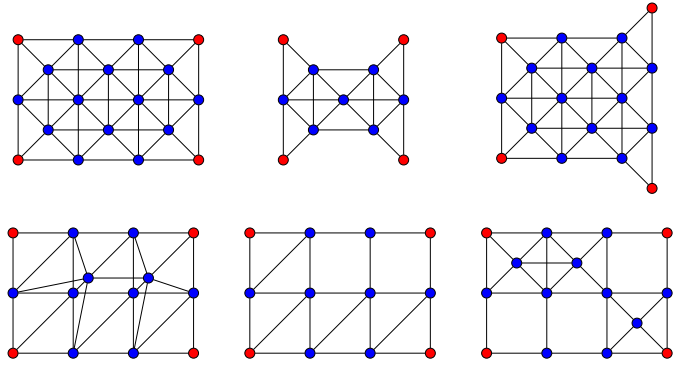

\begin{theorem}[M\"{u}ller, P\'{o}r, and Sereni \cite{mullerfour}, Theorem 4]\label{mullerthm}
A connected graph $G$ has $|(\partial G)'|=4$ if and only if $G$ is one of the following graphs:
\begin{enumerate}[label=(\alph*)]
    \item a subdivision of the star graph $K_{1,4}$,
    \item a subdivision of the tree with exactly four leaves and two vertices of degree~3,
    \item a graph obtained from one of the trees in (b) by removing a vertex of degree 3 and adding all edges between its neighbors,
    \item a subgraph of $N_{a\times c}$ induced by $V_{a\times c}^0\cup (W\cap V_{a\times c}^1)$ for some axis slice convex set $W\subseteq \mathbb{R}^2$, with exactly one path of arbitrary length attached to each of its CEJZ boundary vertices,
    \item the graph $X_{a\times c}$ with exactly one path of arbitrary length attached to each of its CEJZ boundary vertices,
    \item a subgraph of $T_{a\times c}$ induced by $V_{a\times (c+1)}^1\cup (W\cap V_{a\times (c+1)}^0)$ for some axis-slice convex set $W\subseteq \mathbb{R}^2$ that contains $(a,0)$ and $(a,c+1)$, with exactly one path of arbitrary length attached to each of its CEJZ boundary vertices, 
    \item the graph $D_{a\times c}$ with exactly one path of arbitrary length attached to each of its CEJZ boundary vertices, or
    \item the graph $L_{a\times c}$ with exactly one path of arbitrary length attached to each of its CEJZ boundary vertices.
\end{enumerate}
\end{theorem}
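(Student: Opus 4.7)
The theorem requires two directions: the \emph{sufficiency} direction that each listed graph has exactly four CEJZ boundary vertices, and the \emph{necessity} direction that any connected graph with $|(\partial G)'|=4$ belongs to one of the eight families. Sufficiency is a case-by-case verification: for each named family I would check that exactly the four claimed vertices (the leaves of the tree families (a)--(c) and the ``corners'' of the grid families (d)--(h)) admit a witness $u$ satisfying $d(w,u)\leq d(v,u)$ for every neighbor $w$, and that no other vertex does. In the tree families, each leaf is automatically a certificate because every path from a leaf begins with a strict distance drop; in the grid-like families, each corner witnesses the diagonally opposite corner using that the graph metric on a grid is essentially an $\ell_1$ metric.

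The necessity direction is the main work. My plan begins with a reduction lemma, proved by induction on path length: attaching a pendant path to any $v\in(\partial G)'$ preserves $|(\partial G)'|$, with the new leaf replacing $v$ in the CEJZ boundary. The key point is that any witness for $v$ after attachment would have to lie on the new pendant path (to keep the attachment neighbor $x_1$ close enough), but then some pre-existing neighbor of $v$ becomes too far. Iterating this lemma in reverse, any $G$ with $|(\partial G)'|=4$ reduces to a ``core'' $G_0$ with $|(\partial G_0)'|=4$ and no pendant paths attached to its CEJZ boundary vertices; recovering $G$ amounts to describing the core and then re-attaching arbitrary paths.

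The core analysis splits on whether $G_0$ is a tree. For trees, a short argument (the CEJZ analog of \cref{treeandleaves}) identifies $(\partial G_0)'$ with the leaf set, so $G_0$ is a tree with four leaves; analyzing the degree sequence of internal vertices yields either a subdivided $K_{1,4}$ (family (a)) or a subdivided ``H''-graph with two degree-3 vertices (family (b)), and family (c) arises when the reduction instead produces a triangle at a would-be degree-3 branching point. For non-tree cores, the central step is an embedding theorem: for every non-boundary vertex $v$ of $G_0$, one must produce a pair of neighbors $w_1,w_2$ that jointly dominate $v$ with respect to every potential witness $u$, meaning that for each $u$ at least one of $d(w_i,u)>d(v,u)$ holds. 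This combinatorial witness-obstruction condition, imposed simultaneously at every non-boundary vertex, should force a rigid two-dimensional grid structure. A finite case analysis on the positions of the four boundary corners among grid vertices, half-integer vertices, and diagonals then identifies $G_0$ with one of the subgraphs of $N_{a\times c}$ (d), $X_{a\times c}$ (e), subgraphs of $T_{a\times c}$ (f), $D_{a\times c}$ (g), or $L_{a\times c}$ (h).

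The principal obstacle is the embedding theorem: extracting a rigid planar grid structure from the purely metric hypothesis that only four vertices satisfy the CEJZ condition. Subsidiary difficulties include (i) identifying which axis-slice-convex subsets of half-integer vertices may appear in family (d), reflecting that certain inner half-integer vertices can be omitted without altering $(\partial G)'$ while omitting others creates new boundary vertices; (ii) distinguishing among $N_{a\times c}$, $X_{a\times c}$, and $T_{a\times c}$ subgraphs, which differ only through corner edge- or vertex-deletions; and (iii) recognizing the double-layer structures $D_{a\times c}$ and $L_{a\times c}$, which arise when the core carries a two-sheet symmetry relating pairs of boundary corners by parallel copies of an inner grid. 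Once the embedding theorem and the case analysis are complete, reattaching arbitrary pendant paths to the CEJZ boundary vertices of each core, as permitted by the reduction lemma, produces exactly the graphs enumerated in (a)--(h).
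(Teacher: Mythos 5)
This statement is not proved in the paper at all: it is Theorem~4 of M\"uller, P\'or, and Sereni \cite{mullerfour}, imported as a black box (together with the description of its boundary vertices in \cref{mullerremark}) and then used as the input to the paper's own classification of $|\partial G|=4$. So there is no in-paper argument to compare yours against; the relevant benchmark is the original, long and technical, proof in \cite{mullerfour}.

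Judged on its own terms, your proposal is a plausible roadmap but not a proof, and the gap is exactly where you yourself locate it. The pendant-path reduction and the tree case are fine and consistent with \cref{mullerremark} and the CEJZ analogue of \cref{treeandleaves}, but essentially all of the content of the theorem lives in the non-tree core analysis: showing that a core graph with exactly four CEJZ boundary vertices and no pendant paths must be one of the grid-like structures in (d)--(h). You name this ``embedding theorem'' as the principal obstacle and offer only the heuristic that a witness-obstruction condition imposed at every non-boundary vertex ``should force a rigid two-dimensional grid structure''; no mechanism is proposed for extracting the two coordinate directions from the metric hypothesis, for ruling out other candidate cores, for deriving the axis-slice-convexity constraint on which half-integer vertices may be omitted in (d) and (f), or for recognizing when the double-layer families (g) and (h) occur. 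The sufficiency direction for (d)--(h) is likewise asserted rather than carried out: one must verify that no interior or edge vertex of $N_{a\times c}$, $X_{a\times c}$, $T_{a\times c}$, $D_{a\times c}$, or $L_{a\times c}$ admits a witness, which is a genuine computation and not an immediate consequence of an $\ell_1$-metric heuristic (note that these graphs contain diagonal adjacencies, so their metric is not the $\ell_1$ grid metric). As it stands, the proposal would not reconstitute the theorem.
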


\begin{remark}\label{mullerremark}
M\"{u}ller, P\'{o}r, and Sereni show that the graphs in (d)-(h) without any attached paths have CEJZ boundary vertices given by the points with extreme $x$ and $y$ coordinates. For example, $N_{a\times c}$ has CEJZ boundary $\{(0,0),(0,c),(a,0),(a,c)\}$, and $T_{a\times c}$ has boundary $\{(1/2,1/2),(1/2,c+1/2),(a,0),(a,c+1)\}$. Attaching paths to the vertices $v\in (\partial G)'$ replaces $v$ with the new leaf produced from the attachment, and hence preserves the number of CEJZ boundary vertices.
\end{remark}

\section{The boundary stability number of a vertex}\label{stability}

In this section, we introduce a new parameter called the boundary stability number of a vertex. We will establish several results involving this parameter, some of which we will use for the proof of \cref{maintheorem}. We start with the following observation, which can be verified directly from definitions.

\begin{observation}\label{neighbordistance}
Let $G=(V,E)$ be a connected graph, and let $v\in V$. If $w~\in~N(v)$, then for any $u\in V$, $|d(v,u)-d(w,u)|\leq 1$. Additionally, if $v\neq u$, then there exists some $w\in N(v)$ such that $d(w,u)=d(v,u)-1$, as some neighbor of $v$ is on a shortest path from $v$ to $u$.
\end{observation}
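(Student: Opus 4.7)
The plan is to verify both assertions directly from the fact that $d_G$ is a metric combined with the integer-valued nature of graph distance. Both statements are standard consequences of the definition of shortest-path distance, so no technical machinery beyond the triangle inequality and the structure of shortest walks is needed. I expect no real obstacle here; the only thing worth being careful about is ensuring the argument applies uniformly whether or not $u$ coincides with $v$ or $w$.

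First, for the inequality $|d(v,u)-d(w,u)|\leq 1$, I would apply the triangle inequality for the graph metric. Since $w\in N(v)$, we have $d(v,w)=1$, so
\[
d(v,u)\leq d(v,w)+d(w,u)=1+d(w,u),
\]
giving $d(v,u)-d(w,u)\leq 1$. Swapping the roles of $v$ and $w$ in the same inequality yields $d(w,u)-d(v,u)\leq 1$, and the two bounds together give the claimed absolute value bound.

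For the second assertion, assume $v\neq u$ and choose a shortest path $v=v_0,v_1,\ldots,v_k=u$ of length $k=d(v,u)\geq 1$. Set $w=v_1$, so $w\in N(v)$. The subwalk $v_1,v_2,\ldots,v_k$ is a walk from $w$ to $u$ of length $k-1$, hence $d(w,u)\leq d(v,u)-1$. Combined with the first part, which forces $d(w,u)\geq d(v,u)-1$, we conclude $d(w,u)=d(v,u)-1$, exhibiting the desired neighbor of $v$ on a shortest path to $u$. This closes the proof.
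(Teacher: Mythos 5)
Your proof is correct and matches the paper's intended argument: the paper leaves this as an observation "verified directly from definitions," with the second claim justified inline by exactly the shortest-path-neighbor idea you spell out. Nothing further is needed.
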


With this observation in mind, notice that for graphs on at least two vertices, $u$ being a witness for $v\in \partial G$ corresponds to when there are more neighbors of $v$ at distance $d(v,u)-1$ than at distance $d(v,u)+1$. We define a new parameter that records precisely this difference.

\begin{definition}
For a  connected graph $G=(V,E)$ and $v,u\in V$, define
\[\beta_G(v,u)=\sum_{w\in N_G(v)}[d_G(v,u)-d_G(w,u)],\]
and define the {boundary stability number} of $v$ in $G$ to be $\beta_G(v)=\max_{u\in V}\beta_G(v,u)$. When the context is clear, we omit the subscript in $\beta_G$.
\end{definition}

Observe that $\beta(v)$ is always an integer, and it is straightforward to show that when a connected graph $G$ has at least two vertices, $\beta(v)\geq 1$ is equivalent to $v\in \partial G$. When this occurs, any $u\in V$ with $\beta(v,u)\geq 1$ is a witness for $v\in \partial G$. However, the exact value of $\beta(v)$ provides some additional information, which we will use later to consider the effect of certain graph operations. Note that if $G=(V,E)$ is the single vertex graph, then $\beta(v)=0$ for the unique $v\in V$. Since we use the convention that $\partial G=V$ when $|V|=1$, this is the only case in which a boundary vertex does not satisfy $\beta(v)\geq 1$. Examples of $\beta(v)$ are shown in \cref{stabilityexample}.

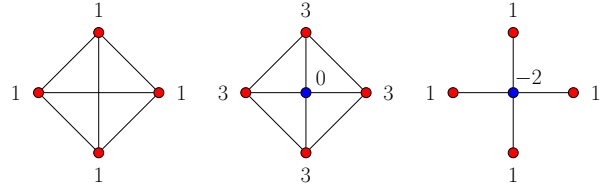
\begin{figure}[h!]
\centering
    \scalebox{0.4}{
    \begin{tikzpicture}
    \node[shape=circle,draw=black, fill=red] (A) at (0,2) {};
    \node[shape=circle,draw=black, fill=red] (B) at (-2,0) {};
    \node[shape=circle,draw=black, fill=red] (C) at (0,-2) {};
    \node[shape=circle,draw=black, fill=red] (D) at (2,0) {};
\node at (0,2.75) {\Huge{$1$}};
    \node at (2.75,0) {\Huge{$1$}};
    \node at (0,-2.75) {\Huge{$1$}};
    \node at (-2.75,0) {\Huge{$1$}};
    \path (A) edge node {} (B);
    \path (A) edge node {} (C);
    \path (A) edge node {} (D);
    \path (B) edge node {} (C);
    \path (B) edge node {} (D);
    \path (C) edge node {} (D);
\end{tikzpicture}
\qquad 
    \begin{tikzpicture}
    \node[shape=circle,draw=black, fill=red] (A) at (0,2) {};
    \node[shape=circle,draw=black, fill=red] (B) at (-2,0) {};
    \node[shape=circle,draw=black, fill=blue] (M) at (0,0) {};
    \node[shape=circle,draw=black, fill=red] (C) at (0,-2) {};
    \node[shape=circle,draw=black, fill=red] (D) at (2,0) {};
    \node at (0,2.75) {\Huge{$3$}};
    \node at (2.75,0) {\Huge{$3$}};
    \node at (0,-2.75) {\Huge{$3$}};
    \node at (-2.75,0) {\Huge{$3$}};
    \node at (0.5,0.5) {\Huge{$0$}};
    \path (A) edge node {} (M);
    \path (M) edge node {} (D);
    \path (A) edge node {} (D);
    \path (A) edge node {} (B);
    \path (B) edge node {} (C);
    \path (C) edge node {} (D);
    \path (B) edge node {} (M);
    \path (C) edge node {} (M);
\end{tikzpicture} \qquad 
\begin{tikzpicture}
    \node[shape=circle,draw=black, fill=red] (3) at (4,0) {};
    \node[shape=circle,draw=black, fill=blue] (4) at (6,0) {};
    \node[shape=circle,draw=black, fill=red] (5) at (8,0) {};
    \node[shape=circle,draw=black, fill=red] (6) at (6,2) {};
    \node[shape=circle,draw=black, fill=red] (7) at (6,-2) {};
    \path (3) edge node {} (4);
    \path (4) edge node {} (5);
    \path (4) edge node {} (6);
    \path (4) edge node {} (7);
    \node at (3.25,0) {\Huge{$1$}};
    \node at (6,2.75) {\Huge{$1$}};
    \node at (6.5,.5) {\Huge{$-2$}};
    \node at (6,-2.75) {\Huge{$1$}};
    \node at (8.75,0) {\Huge{$1$}};
    \end{tikzpicture}
    
   }
    \caption{Several graphs with the boundary stability number of each vertex shown next to it. In general, leaves in any graph and all vertices in the complete graph $K_n$ for $n\geq 2$ have boundary stability number 1.}
    \label{stabilityexample}
\end{figure}

We now establish a characterization of when vertices of degree 2 are in $\partial G$. Our method for this is based on part of the proof of \cite[Proposition 1.2]{chartrand}. 

\begin{lemma}\label{degree2lemma}
Let $G=(V,E)$ be a connected graph.  A vertex $v\in V$ with $\deg(v)=2$ is a boundary vertex if and only if there exists a cycle in $G$ that contains $v$.
\end{lemma}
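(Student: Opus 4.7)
The plan is to analyze the three distances $d(v,u)$, $d(w_1,u)$, $d(w_2,u)$, where $w_1, w_2$ denote the two neighbors of $v$. For any $u \neq v$, a shortest $v$-$u$ walk must begin with an edge $vw_1$ or $vw_2$, so $d(v,u) = 1 + \min(d(w_1,u), d(w_2,u))$. Consequently, the Steinerberger boundary condition $d(w_1,u) + d(w_2,u) < 2d(v,u)$ simplifies (for $u \neq v$) to $|d(w_1,u) - d(w_2,u)| \leq 1$, so $v \in \partial G$ precisely when there is some $u \neq v$ at which $w_1$ and $w_2$ are roughly equidistant. The case $u = v$ is never a witness since then the sum is $2 > 0$.

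For the forward direction, suppose $v \in \partial G$ with witness $u$. Since $u \neq v$, \cref{neighbordistance} lets us assume (up to relabeling) that $d(w_1,u) = d(v,u) - 1$, and the boundary inequality then forces $d(w_2,u) \leq d(v,u)$. Any shortest $w_i$-$u$ path in $G$ must avoid $v$: passing through $v$ would require length at least $1 + d(v,u) > d(w_i,u)$. Hence $w_1$ and $w_2$ lie in the same connected component of $G - v$, and concatenating a $w_1$-$w_2$ path in $G - v$ with the edges $vw_1, vw_2$ produces a cycle through $v$.

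For the reverse direction, let $C \colon v = c_0, c_1 = w_1, c_2, \ldots, c_{k-1} = w_2, c_0$ be a shortest cycle through $v$, of length $k \geq 3$, and take $u = c_m$ with $m = \lfloor k/2 \rfloor$. A direct check in both parity cases gives $d_C(w_1,u) = m - 1$ and $d_C(w_2,u) \leq m$, so $d(w_1,u) + d(w_2,u) \leq 2m - 1$. The crux is to show $d(v,u) = m$: if $d(v,u) < m$, a shortest $v$-$u$ path $P$ in $G$ starts with an edge $vw_1$ or $vw_2$, and we concatenate $P$ with the arc of $C$ from $u$ back to $v$ whose final edge at $v$ is the one \emph{not} used by $P$. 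The result is a closed walk through $v$ of length less than $k$ using two distinct edges incident to $v$, from which a standard shortcutting argument (splitting at repeated internal occurrences of $v$ and contracting other repeats) extracts a simple cycle through $v$ of length less than $k$, contradicting the minimality of $C$. Once $d(v,u) = m$ is established, $d(w_1,u) + d(w_2,u) \leq 2m - 1 < 2m = 2d(v,u)$, so $u$ witnesses $v \in \partial G$.

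The main obstacle is the extraction step in the reverse direction: a shorter closed walk through $v$ does not \emph{a priori} yield a shorter cycle through $v$, only a shorter cycle somewhere in $G$. The choice of arc is what makes this work, since it guarantees the closed walk uses both edges at $v$, preventing it from collapsing to a backtrack and ensuring the resulting simple cycle still passes through $v$. With that in hand, everything else reduces to a routine computation of cycle distances.
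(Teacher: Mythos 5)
Your overall strategy matches the paper's: in the forward direction you use \cref{neighbordistance} to place one neighbor on a shortest path to the witness and deduce that shortest paths from both neighbors to the witness avoid $v$; in the reverse direction you take the antipodal vertex $u=c_{\lfloor k/2\rfloor}$ on a minimal cycle through $v$ and use minimality of the cycle to pin down $d(v,u)$. Your forward direction is in fact a little cleaner than the paper's, which concatenates the two shortest paths into a closed walk and then trims it to a cycle, whereas you simply observe that $w_1$ and $w_2$ are both joined to $u$ in $G-v$ and take any $w_1$--$w_2$ path there. Your instinct that the realization of the antipodal distance is the crux of the reverse direction is also right: the paper asserts this in one sentence without justification, so it is worth spelling out.

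The one step to repair is the justification of the extraction. The principle you invoke --- that a closed walk through $v$ of length less than $k$ using two distinct edges incident to $v$ can be split at internal occurrences of $v$ and shortcut to a simple cycle through $v$ of length less than $k$ --- is false in general: the walk $v,w_1,v,w_2,v$ in the path $w_1$--$v$--$w_2$ uses both edges at $v$, yet splitting at the internal occurrence of $v$ yields two backtracks, and no cycle through $v$ exists at all. So ``uses both edges at $v$'' is not the property doing the work. What actually saves your argument is that the closed walk you build visits $v$ only at its two endpoints: $P$ is a shortest (hence simple) path with $v$ as its initial vertex, so $v$ does not recur along it, and the chosen arc is a subpath of the cycle $C$ that contains $v$ only as its terminal vertex. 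Deleting the two endpoint occurrences of $v$ therefore leaves a $w_1$--$w_2$ walk in $G-v$ of length less than $k-2$; extracting a path from this walk and reattaching the edges $vw_1$ and $w_2v$ produces a cycle through $v$ of length less than $k$, the desired contradiction. With the step justified this way the extraction is immediate and the proof is complete.
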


\begin{proof}
Let $N(v)=\{v_1,v_2\}$.  Suppose there exists a cycle in $G$ that contains $v$. Then choose such a cycle $C$ so that the number of vertices in it is minimal and let $k$ denote the number of vertices in $C$. Observe that both neighbors of $v$ are in this cycle, and $d(v,w)\leq \lceil k/2\rceil$ for any $w\in C$. Since $C$ is a minimal cycle containing $v$, there exists $u\in C$ where equality holds. Additionally, $d(v_i,u)\leq \lceil k/2\rceil$ for $i\in \{1,2\}$, and combined with \cref{neighbordistance}, we see that \[\beta(v,u)=[d(v,u)-d(v_1,u)]+[d(v,u)-d(v_2,u)] \geq 1.\]
Hence, $\beta(v)\geq 1$ and $v\in \partial G$.

Conversely, suppose $v\in \partial G$. Fix a witness $u\in V$, and assume without loss of generality that some shortest path from $u$ to $v$ has the form $P_1=vv_1w_2\ldots w_{\ell-1}u$, so $d(v_1,u)=d(v,u)-1$. Combining \cref{neighbordistance} with $v\in \partial G$, it must be that $d(v_2,u)\leq d(v,u)$. This implies that a shortest path $P_2=v_2w_1'w_2'w_3'\ldots w_{k-1}'u$ from $v_2$ to $u$ cannot contain $v$. Then $W=vv_1w_2\ldots w_{\ell-1} uw_{k-1}'\ldots w_1'v_2v$ is a closed walk that contains $v,v_1$, and $v_2$. Finding the first vertex $w$ in $w_2w_3\ldots w_{\ell-1}u$ that also appears in $v_2w_1'w_2'w_3'\ldots w_{k-1}'u$ and removing all vertices in between them in $W$ produces a cycle containing $v$.
\end{proof}

It is straightforward to show that for a graph $G=(V,E)$, a vertex $v\in V$ of degree $2$ is either a cut vertex or contained in some cycle. Hence, we have the following alternative characterization for when $v$ is in $\partial G$.

\begin{corollary}\label{deg2cut}
Let $G=(V,E)$ be a connected graph. A vertex $v\in V$ with $\deg(v)=2$ is either a boundary vertex of $G$ or a cut vertex of $G$. 
\end{corollary}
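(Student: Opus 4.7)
The plan is to combine Lemma \ref{degree2lemma} with the elementary graph-theoretic fact quoted in the excerpt: any degree-$2$ vertex is either a cut vertex or lies on some cycle. Given that dichotomy, the corollary follows immediately, so the only real work is to verify the dichotomy itself.

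To justify the dichotomy, I would argue as follows. Let $v \in V$ with $\deg(v)=2$ and $N(v)=\{v_1,v_2\}$, and suppose $v$ is not a cut vertex. Then $G-v$ is connected (note that since $\deg(v)=2$ we have $|V|\geq 3$, so $G-v$ is nonempty), hence there is a path $P$ in $G-v$ from $v_1$ to $v_2$. Since $P$ avoids $v$, concatenating the edges $(v,v_1)$ and $(v_2,v)$ with $P$ yields a closed walk through $v$ in which $v$ appears exactly at the endpoints; this closed walk is in fact a cycle because $P$ consists of distinct vertices and neither endpoint of $P$ equals $v$. Thus $v$ lies on a cycle.

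With the dichotomy in hand, suppose $v\in V$ has $\deg(v)=2$ and is not a cut vertex. By the argument above, $v$ lies on a cycle of $G$, and then Lemma \ref{degree2lemma} gives $v \in \partial G$. This is exactly the contrapositive of the assertion that $v$ is a boundary vertex or a cut vertex, which is the statement of the corollary.

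There is no real obstacle here: the content of the corollary is just a repackaging of Lemma \ref{degree2lemma} via the cycle/cut-vertex dichotomy, and the only thing one needs to be slightly careful about is that when $v$ is not a cut vertex, the path $P$ in $G-v$ connecting $v_1$ to $v_2$ produces an honest cycle (not just a closed walk) when combined with the two edges at $v$, which follows because $P$ is a simple path avoiding $v$.
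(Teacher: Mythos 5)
Your proof is correct and follows the same route as the paper, which likewise deduces the corollary from \cref{degree2lemma} together with the dichotomy that a degree-$2$ vertex is either a cut vertex or lies on a cycle (a fact the paper simply declares straightforward). Your write-up just supplies the short verification of that dichotomy explicitly, which matches the intended argument.
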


We now consider the effects of certain graph operations. Observe that in \cref{mullerthm}, attaching paths to vertices in $(\partial G)'$ results in a graph $H$ with $|(\partial H)'|=|(\partial G)'|$. This is not always true for vertices in $\partial G$, and we will see that the boundary stability number allows us to determine precisely when the number of boundary vertices is preserved. We will use the following observation that is straightforward to establish.

\begin{observation}\label{cutdistance}
If $G=(V,E)$ is connected, $v$ is a cut vertex of $G$, and $u$ and $w$ are in different components of $G-v$, then $d(u,w)=d(u,v)+d(v,w)$.
\end{observation}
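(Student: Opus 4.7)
The plan is to prove the two inequalities $d(u,w) \leq d(u,v) + d(v,w)$ and $d(u,w) \geq d(u,v) + d(v,w)$ separately. The upper bound is immediate from the triangle inequality, which holds because $d$ is a metric on $V$ (as noted in the preliminaries): concatenating a shortest $u$–$v$ path with a shortest $v$–$w$ path yields a walk from $u$ to $w$ of length $d(u,v)+d(v,w)$, and the distance is at most the length of any such walk.

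For the reverse inequality, I would invoke the defining property of a cut vertex. Since $u$ and $w$ lie in different connected components of $G-v$, there is no path from $u$ to $w$ in $G-v$. Consequently, every walk (and in particular every shortest path) from $u$ to $w$ in $G$ must contain $v$ as one of its internal vertices. Fix a shortest $u$–$w$ path $P$ in $G$ and write it as the concatenation of its subpath $P_1$ from $u$ to the first occurrence of $v$ and its subpath $P_2$ from $v$ to $w$. Then $P_1$ is a $u$–$v$ walk, so has length at least $d(u,v)$, and similarly $P_2$ has length at least $d(v,w)$. Therefore $d(u,w) = |P| = |P_1| + |P_2| \geq d(u,v) + d(v,w)$.

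Combining the two inequalities yields the desired equality $d(u,w) = d(u,v) + d(v,w)$. The only subtlety worth flagging is the claim that every $u$–$w$ path in $G$ passes through $v$; this is exactly the statement that $v$ is a cut vertex separating $u$ from $w$, and it is what distinguishes this situation from the generic metric inequality. I do not anticipate any real obstacle, as the argument is a routine application of the cut vertex definition together with the triangle inequality.
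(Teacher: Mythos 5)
Your proof is correct and matches the paper's own (brief) argument for this observation: the paper likewise obtains $d(u,w)\leq d(u,v)+d(v,w)$ from the triangle inequality and the reverse inequality from the fact that every $u$--$w$ path must pass through the cut vertex $v$. The paper treats this as a one-line observation, so your slightly more detailed decomposition of the shortest path at $v$ is just a fleshed-out version of the same route.
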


{
\begin{lemma}\label{graphedgeattachment}
Let $G_1=(V_1,E_1)$ be a connected graph on at least two vertices with $v_1\in V_1$, and $G_2=(V_2,E_2)$ be a connected graph where $V_2$ is disjoint from $V_1$. For any $v_2\in V_2$, define $G=(V_1\cup V_2,E_1\cup E_2\cup(v_1,v_2))$. Then $\beta_G(v_1)=\beta_{G_1}(v_1)-1$, and for $v\in V_1\setminus \{v_1\}$, we have $\beta_G(v)=\beta_{G_1}(v)$.
\end{lemma}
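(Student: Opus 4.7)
The plan is to exploit the fact that the new edge $(v_1,v_2)$ is a bridge in $G$, so $v_1$ is a cut vertex whose removal separates $V_2$ from $V_1\setminus\{v_1\}$. By \cref{cutdistance}, distances inside $V_1$ are unchanged, i.e.\ $d_G(v,u)=d_{G_1}(v,u)$ for $v,u\in V_1$, and for $v\in V_1$, $u\in V_2$ we have $d_G(v,u)=d_{G_1}(v,v_1)+1+d_{G_2}(v_2,u)$. These are the only facts about $G$ I will need; everything else is a direct computation of $\beta_G(v,u)$ against this decomposition.

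First I would handle $v\in V_1\setminus\{v_1\}$, where $N_G(v)=N_{G_1}(v)$. If a candidate witness $u$ lies in $V_1$, every distance appearing in $\beta_G(v,u)$ agrees with its $G_1$ counterpart, so $\beta_G(v,u)=\beta_{G_1}(v,u)$. If $u\in V_2$, then the additive term $1+d_{G_2}(v_2,u)$ appears in $d_G(v,u)$ and in each $d_G(w,u)$ with $w\in N_G(v)$, and so cancels telescopically, yielding $\beta_G(v,u)=\beta_{G_1}(v,v_1)$. Since $v_1\in V_1$, this value already appears in the max defining $\beta_{G_1}(v)$, so $\beta_G(v)=\beta_{G_1}(v)$.

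Next I would treat $v=v_1$, where $N_G(v_1)=N_{G_1}(v_1)\cup\{v_2\}$. For a witness $u\in V_1$, the old neighbors contribute exactly $\beta_{G_1}(v_1,u)$, while the new neighbor $v_2$ contributes $d_G(v_1,u)-d_G(v_2,u)=-1$ because $d_G(v_2,u)=d_{G_1}(v_1,u)+1$ by the bridge formula; hence $\beta_G(v_1,u)=\beta_{G_1}(v_1,u)-1$. For a witness $u\in V_2$, the bridge formula gives $d_G(w,u)=d_G(v_1,u)+1$ for every $w\in N_{G_1}(v_1)$ and $d_G(v_2,u)=d_G(v_1,u)-1$, yielding $\beta_G(v_1,u)=1-\deg_{G_1}(v_1)$. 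Thus $\beta_G(v_1)=\max\{\beta_{G_1}(v_1)-1,\;1-\deg_{G_1}(v_1)\}$.

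The main obstacle is showing that the first term dominates, i.e.\ $\beta_{G_1}(v_1)\ge 2-\deg_{G_1}(v_1)$, so that the max collapses to $\beta_{G_1}(v_1)-1$. To this end I would pick any $u'\in V_1\setminus\{v_1\}$, which exists because $|V_1|\ge 2$. By \cref{neighbordistance}, some neighbor of $v_1$ lies on a shortest $v_1$-$u'$ path in $G_1$ and thus contributes $+1$ to $\beta_{G_1}(v_1,u')$, while each of the remaining $\deg_{G_1}(v_1)-1$ neighbors contributes at least $-1$. Summing yields $\beta_{G_1}(v_1)\ge\beta_{G_1}(v_1,u')\ge 2-\deg_{G_1}(v_1)$, which is exactly the needed inequality and completes the proof.
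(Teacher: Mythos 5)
Your proposal is correct and follows essentially the same route as the paper's proof: decompose distances through the cut vertex $v_1$ via \cref{cutdistance}, compute $\beta_G(\cdot,u)$ separately for witnesses in $V_1$ and $V_2$, and dispose of the $V_2$ witnesses for $v_1$ by using \cref{neighbordistance} at some $u'\in V_1\setminus\{v_1\}$ to get $\beta_{G_1}(v_1)\geq 2-\deg_{G_1}(v_1)$. No gaps.
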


\begin{proof}
Observe that $v_1$ is a cut vertex of $G$, and $N_G(v_1)=N_{G_1}(v_1)\cup \{v_2\}$. Additionally, for $u,w\in V_1$, we have $d_G(w,u)=d_{G_1}(w,u)$. Combining this with \cref{cutdistance}, we see that for any $u\in V_1$,
\begin{equation*}
    \begin{split}
        \beta_G(v_1, u)&=\beta_{G_1}(v_1,u)+ [d_G(v_1,u)-d_G(v_2,u)] \\
    & = \beta_{G_1}(v_1,u)+[d_G(v_1,u)-(1+d_G(v_1,u))] \\
    &=\beta_{G_1}(v_1,u)-1.
    \end{split}
\end{equation*}
Additionally, for any $u\in V_2$, we again use \cref{cutdistance} to find
\begin{equation*}
    \begin{split}
        \beta_G(v_1, u) & = [d_G(v_1,u)-d_G(v_2,u)]+\sum_{w\in N_{G_1}(v)} [d_G(v_1,u)-d_G(w,u)]\\
    & = [d_G(v_1,u)-d_G(v_2,u)]+\sum_{w\in N_{G_1}(v_1)} [d_G(v_1,u)-(1+d_G(v_1,u))]\\
    & = 1-|N_{G_1}(v_1)|.
    \end{split}
\end{equation*}
Since $|V_1|\geq 2$, choosing some $u\in V_1$ distinct from $v_1$ and applying \cref{neighbordistance} implies that \[\beta_{G_1}(v_1)\geq \beta_G(v_1,u)\geq  1-(|N_{G_1}(v_1)|-1)=2-|N_{G_1}(v_1)|.\]
Hence, \[\beta_G(v_1)=\max_{u\in V_1\cup V_2} \{\beta_G(v_1,u)\}=\max_{u\in V_1}\{\beta_{G_1}(v_1,u)-1\}=\beta_{G_1}(v_1)-1.\]

Now consider $v\in V_1\setminus \{v_1\}$, where 
$N_{G}(v)=N_{G_1}(v)$. As before, for $u,w\in V_1$, we have $d_G(w,u)=d_{G_1}(w,u)$, so $\beta_G(v,u)=\beta_{G_1}(v,u)$. For $u\in V_2$, \cref{cutdistance} and $d_{G}(v,v_1)=d_{G_1}(v,v_1)$ imply
\begin{align*}
    \beta_G(v, u) & = \sum_{w\in N_{G_1}(v)} [d_G(v,u)-d_G(w,u)]\\
    & = \sum_{w\in N_{G_1}(v)} [(d_G(v,v_1)+d_G(v_1,u))-(d_G(w,v_1)+d_G(v_1,u))] \\
    & = \sum_{w\in N_{G_1}(v)} [d_{G}(v,v_1)-d_{G}(w,v_1)] \\
    & = \sum_{w\in N_{G_1}(v)} [d_{G_1}(v,v_1)-d_{G_1}(w,v_1)] \\
    & = \beta_{G_1}(v,v_1).
\end{align*}
Hence, we conclude that \[\beta_G(v)=\max_{u\in V_1\cup V_2} \{\beta_G(v,u)\}=\max_{u\in V_1}\{\beta_{G_1}(v,u)\}=\beta_{G_1}(v).\qedhere\]
\end{proof}

An example of \cref{graphedgeattachment} is shown in \cref{attachmentexample}. One consequence of this result is that we can determine precisely how $\partial G$ relates to $\partial G_1$ and $\partial G_2$. 

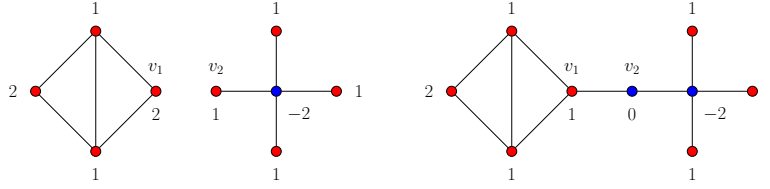
\begin{figure}[h!]
\begin{subfigure}{0.4\textwidth}
    \centering
    \scalebox{0.4}{\begin{tikzpicture}
    \node[shape=circle,draw=black, fill=red] (A) at (0,2) {};
    \node[shape=circle,draw=black, fill=red] (B) at (-2,0) {};
    \node[shape=circle,draw=black, fill=red] (C) at (0,-2) {};
    \node[shape=circle,draw=black, fill=red] (D) at (2,0) {};
    \node[shape=circle,draw=black, fill=red] (3) at (4,0) {};
    \node[shape=circle,draw=black, fill=blue] (4) at (6,0) {};
    \node[shape=circle,draw=black, fill=red] (5) at (8,0) {};
    \node[shape=circle,draw=black, fill=red] (6) at (6,2) {};
    \node[shape=circle,draw=black, fill=red] (7) at (6,-2) {};
    \path (A) edge node {} (B);
    \path (A) edge node {} (C);
    \path (A) edge node {} (D);
    \path (B) edge node {} (C);
    \path (C) edge node {} (D);
    \path (3) edge node {} (4);
    \path (4) edge node {} (5);
    \path (4) edge node {} (6);
    \path (4) edge node {} (7);
    \node at (-2.75,0) {\huge{$2$}};
    \node at (0,2.75) {\huge{$1$}};
    \node at (0,-2.75) {\huge{$1$}};
    \node at (2,-0.75) {\huge{$2$}};
    \node at (2,0.75) {\huge{$v_1$}};
    \node at (4,-0.75) {\huge{$1$}};
    \node at (4,0.75) {\huge{$v_2$}};
    \node at (6,2.75) {\huge{$1$}};
    \node at (6.75,-0.75) {\huge{$-2$}};
    \node at (6,-2.75) {\huge{$1$}};
    \node at (8.75,0) {\huge{$1$}};
    \end{tikzpicture}}
\end{subfigure}$~~~$
\begin{subfigure}{0.4\textwidth}
    \centering
    \scalebox{0.4}{
    \begin{tikzpicture}
    \node[shape=circle,draw=black, fill=red] (A) at (0,2) {};
    \node[shape=circle,draw=black, fill=red] (B) at (-2,0) {};
    \node[shape=circle,draw=black, fill=red] (C) at (0,-2) {};
    \node[shape=circle,draw=black, fill=red] (D) at (2,0) {};
    \node[shape=circle,draw=black, fill=blue] (3) at (4,0) {};
    \node[shape=circle,draw=black, fill=blue] (4) at (6,0) {};
    \node[shape=circle,draw=black, fill=red] (5) at (8,0) {};
    \node[shape=circle,draw=black, fill=red] (6) at (6,2) {};
    \node[shape=circle,draw=black, fill=red] (7) at (6,-2) {};
    \path (A) edge node {} (B);
    \path (A) edge node {} (C);
    \path (A) edge node {} (D);
    \path (B) edge node {} (C);
    \path (C) edge node {} (D);
    \path (D) edge node {} (3);
    \path (3) edge node {} (4);
    \path (4) edge node {} (5);
    \path (4) edge node {} (6);
    \path (4) edge node {} (7);
    \node at (-2.75,0) {\huge{$2$}};
    \node at (0,2.75) {\huge{$1$}};
    \node at (0,-2.75) {\huge{$1$}};
    \node at (2,-0.75) {\huge{$1$}};
    \node at (2,0.75) {\huge{$v_1$}};
    \node at (4,-0.75) {\huge{$0$}};
    \node at (4,0.75) {\huge{$v_2$}};
    \node at (6,2.75) {\huge{$1$}};
    \node at (6.75,-0.75) {\huge{$-2$}};
    \node at (6,-2.75) {\huge{$1$}};
    \node at (8.75,0) {\huge{$1$}};
    \end{tikzpicture}
    }
\end{subfigure}
\caption{Example of \cref{graphedgeattachment}. The graphs $G_1, G_2,$ and $G$ are given with boundary vertices shown in red. For each $v\in V_1,V_2,V$, $\beta(v)$ is shown next to $v$.}
\label{attachmentexample}
\end{figure}

\begin{theorem}\label{edgeattachment}
Let $G_1=(V_1,E_1)$ and $G_2=(V_2,E_2)$ be connected graphs on disjoint vertex sets, and let $v_1\in V_1$ and $v_2\in V_2$. Define $G=(V_1\cup V_2,E_1\cup E_2\cup(v_1,v_2))$.
\begin{enumerate}[label=(\alph*)]
            \item If $\beta_{G_1}(v_1)\neq 1$ and $\beta_{G_2}(v_2)\neq 1$, then $\partial G=\partial G_1\cup \partial G_2$.
        \item If $\beta_{G_1}(v_1)=1$ and $\beta_{G_2}(v_2)\neq 1$, then $\partial G=(\partial G_1\cup \partial G_2) \setminus \{v_1\}$.
        \item If $\beta_{G_1}(v_1)\neq 1$ and $\beta_{G_2}(v_2)= 1$, then $\partial G=(\partial G_1\cup \partial G_2) \setminus \{v_2\}$.
        \item If $\beta_{G_1}(v_1)=1$ and $\beta_{G_2}(v_2)=1$, , then $\partial G=(\partial G_1\cup \partial G_2) \setminus \{v_1,v_2\}$.
\end{enumerate}
In particular, we have  $|\partial G|\geq \max\{|\partial G_1|,|\partial G_2|\}$.
\end{theorem}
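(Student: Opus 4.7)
The plan is to deduce this theorem directly from \cref{graphedgeattachment} applied to $G_1$, together with its symmetric counterpart applied to $G_2$. Throughout, I use that for a connected graph on at least two vertices, $v$ lies in the boundary iff $\beta(v) \geq 1$; since $|V_1 \cup V_2| \geq 2$ this criterion is available for $G$, and it is available for $G_i$ whenever $|V_i| \geq 2$.

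Assuming first that $|V_1|,|V_2| \geq 2$, \cref{graphedgeattachment} (applied on each side) gives $\beta_G(v) = \beta_{G_1}(v)$ for every $v \in V_1 \setminus \{v_1\}$ and $\beta_G(v_1) = \beta_{G_1}(v_1) - 1$, together with the symmetric statement for $G_2$. Consequently, for each $v \in V \setminus \{v_1, v_2\}$, membership in $\partial G$ coincides with membership in $\partial G_1 \cup \partial G_2$. It only remains to decide whether $v_1$ and $v_2$ lie in $\partial G$. Since $v_1 \in \partial G$ iff $\beta_{G_1}(v_1) \geq 2$, splitting the three subcases $\beta_{G_1}(v_1) \leq 0$ (so $v_1 \notin \partial G_1$ and $v_1 \notin \partial G$), $\beta_{G_1}(v_1) = 1$ (so $v_1 \in \partial G_1$ but $v_1 \notin \partial G$), and $\beta_{G_1}(v_1) \geq 2$ (so $v_1$ lies in both boundaries) shows that $v_1$ is excised from $\partial G_1 \cup \partial G_2$ exactly when $\beta_{G_1}(v_1) = 1$. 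The same dichotomy holds for $v_2$, and combining these two independent possibilities yields cases (a)--(d) verbatim.

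For the concluding inequality a short case check suffices. In (a), $\partial G$ is the full union, so the bound is trivial. In (b) and (c), the condition $\beta_{G_i}(v_i) = 1$ forces $|V_i| \geq 2$, so \cref{boundary2} gives $|\partial G_i| \geq 2$ on the excised side; hence $|\partial G| = |\partial G_1| + |\partial G_2| - 1 \geq \max\{|\partial G_1|,|\partial G_2|\}$. In (d), both $|\partial G_i| \geq 2$, so $|\partial G| = |\partial G_1| + |\partial G_2| - 2 \geq \max\{|\partial G_1|,|\partial G_2|\}$. The only real bookkeeping obstacle I anticipate is the edge case where some $|V_i| = 1$: here $\beta_{G_i}(v_i) = 0 \neq 1$, so only (a) or the appropriate half of (b)/(c) is in play, and $v_i$ is a leaf of $G$, which is placed into $\partial G$ directly by \cref{treeandleaves}---matching the claimed formula without needing to invoke \cref{graphedgeattachment} on the singleton side.
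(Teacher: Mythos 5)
Your proposal is correct and follows essentially the same route as the paper: both reduce the theorem to \cref{graphedgeattachment} (applied symmetrically on each side), use the criterion $v\in\partial G\iff\beta_G(v)\geq 1$ to decide membership of $v_1,v_2$, and dispatch the singleton cases via \cref{treeandleaves}. Your explicit case check for the final inequality $|\partial G|\geq\max\{|\partial G_1|,|\partial G_2|\}$ is slightly more detailed than the paper, which leaves that step implicit, but the substance is the same.
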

\begin{proof}
First, suppose that $|V_1|>1$ and $|V_2|>1$. Then \cref{graphedgeattachment} implies that for any vertex $v\in V_i\setminus \{v_i\}$, we have $v\in \partial G_i$ if and only if $v\in \partial G$. For $v_i$, observe that the same is true except when $\beta_{G_i}(v_i)=1$, and in this case $v_i\in \partial G_i$ and $v_i\notin \partial G$. The theorem then follows from the various cases.

Now suppose $|V_1|>1$ and $|V_2|=1$. In this case, $\partial G_2=\{v_2\}$ and $\beta_{G_2}(v_2)=0$, so it suffices to show (a) and (b) with $\partial G_2=\{v_2\}$. Since $v_2$ is a leaf in $G$, \cref{treeandleaves} implies $v_2\in \partial G$. Applying the above argument with \cref{graphedgeattachment} on $V_1$, we conclude (a) and (b). The case $|V_1|=1$ and $|V_2|>1$ is similar. Finally, when $|V_1|=|V_2|=1$, we see that $\partial G_i=\{v_i\}$ and $\beta_{G_i}(v_i)=0$ for all $i$. In this case, $G$ is a path graph on two vertices, and (a) is clear.
\end{proof}
}

We conclude this section with results on subgraphs. Using the boundary stability number, we can sometimes determine when boundary vertices in a subgraph of $G$ are also boundary vertices in $G$ itself.

\begin{lemma}\label{boundarysubgraph}
Let $G$ be a connected graph with a connected subgraph $H$ on at least two vertices. Suppose $v\in \partial H$, and let $u$ be a witness for $v$. If $N_{H}(v)=N_G(v)$ and $d_{H}(v,u)=d_G(v,u)$, then $v \in \partial G$.
\end{lemma}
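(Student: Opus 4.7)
The plan is to show that the same vertex $u$ serves as a witness for $v$ in $G$. Concretely, I will argue that $\beta_G(v,u) \ge \beta_H(v,u) \ge 1$, which by the equivalence noted after the definition of boundary stability number implies $v \in \partial G$.

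First, I would observe that since $H$ is a (connected) subgraph of $G$, every walk in $H$ is a walk in $G$, so $d_G(x,y) \le d_H(x,y)$ for all $x,y$ in $H$. Second, I would unpack the hypotheses: $N_H(v) = N_G(v)$ means the neighbor sets (and in particular the degrees) agree, so the sums $\sum_{w \in N(v)}$ are literally the same sums in $H$ and $G$. Combining these, for each $w \in N(v)$ we have $d_G(w,u) \le d_H(w,u)$, hence
\[
\sum_{w \in N_G(v)} \bigl[d_G(v,u) - d_G(w,u)\bigr] \;\ge\; \sum_{w \in N_H(v)} \bigl[d_G(v,u) - d_H(w,u)\bigr].
\]
Using the hypothesis $d_G(v,u) = d_H(v,u)$ on the right-hand side converts it into $\beta_H(v,u)$, giving $\beta_G(v,u) \ge \beta_H(v,u)$.

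Since $|V(H)| \ge 2$ and $v \in \partial H$ with witness $u$, the remark following the definition of $\beta$ gives $\beta_H(v,u) \ge 1$. Therefore $\beta_G(v,u) \ge 1$, so $\beta_G(v) \ge 1$, and $v \in \partial G$.

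There is no genuine obstacle here: the argument is essentially the monotonicity $d_G \le d_H$ combined with the two equalities that neutralize this monotonicity precisely at $v$ itself. The only point to be slightly careful about is verifying that the neighbor sums in $H$ and $G$ are indexed by the same set (which is exactly the content of the hypothesis $N_H(v) = N_G(v)$) and that $u \in V(H) \subseteq V(G)$, so that $d_G(v,u)$ and $d_G(w,u)$ are well-defined with $u$ playing the same role in both graphs.
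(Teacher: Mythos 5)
Your proposal is correct and follows essentially the same route as the paper's proof: both use the monotonicity $d_G(w,u)\leq d_H(w,u)$ together with the hypotheses $N_H(v)=N_G(v)$ and $d_H(v,u)=d_G(v,u)$ to conclude $\beta_G(v,u)\geq \beta_H(v,u)\geq 1$. Your write-up is just a slightly more explicit version of the paper's chain of inequalities.
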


\begin{proof}
Notice that for any $w\in N_G(v)=N_{H}(v)$, we have $d_G(w,u)\leq d_{H}(w,u)$. Then a direct calculation shows 
    \begin{align*}
        \beta_{G}(v)&\geq \beta_G(v,u) \\
        & =\sum_{w\in N_G(v)}[d_G(v,u)-d_G(w,u)] \\
        & \geq \sum_{w\in N_{H}(v)}[d_{H}(v,u)-d_{H}(w,u)]\\
        & = \beta_{H}(v,u) \\
        & \geq 1. \qedhere
    \end{align*}
\end{proof}

\begin{corollary}\label{boundarysubgraphcor}
Let $G$ be a connected graph with a connected subgraph $H$ on at least two vertices. Suppose $v\in \partial H$, and let $u$ be a witness for $v$. If $N_{H}(v)=N_G(v)$ and $d_{H}(v,u)=2$, then $v \in \partial G$.
\end{corollary}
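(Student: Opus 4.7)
The plan is to reduce this corollary directly to \cref{boundarysubgraph}. By that lemma, it suffices to show that the hypothesis $d_H(v,u)=2$, together with $N_H(v)=N_G(v)$, already forces $d_G(v,u)=d_H(v,u)$.

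First I would establish $d_G(v,u)\le 2$. Since $H$ is a subgraph of $G$, any walk in $H$ is also a walk in $G$, so $d_G(v,u)\le d_H(v,u)=2$. Next I would rule out $d_G(v,u)\le 1$. We cannot have $d_G(v,u)=0$ because $u=v$ would give $d_H(v,u)=0\ne 2$. We cannot have $d_G(v,u)=1$ either, because then $u\in N_G(v)=N_H(v)$, and hence $d_H(v,u)=1$, again contradicting $d_H(v,u)=2$. Therefore $d_G(v,u)=2=d_H(v,u)$.

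Having verified the missing hypothesis, I would invoke \cref{boundarysubgraph} with the same witness $u$ to conclude $v\in\partial G$. There is no real obstacle here; the only subtlety is the brief case check that uses $N_H(v)=N_G(v)$ to prevent $u$ from becoming a $G$-neighbor of $v$ while being at $H$-distance $2$ from $v$.
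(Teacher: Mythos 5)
Your proposal is correct and follows essentially the same route as the paper: both arguments show $d_G(v,u)\le d_H(v,u)=2$ via the subgraph inclusion, use $N_H(v)=N_G(v)$ to rule out $d_G(v,u)\le 1$, and then invoke \cref{boundarysubgraph} with the same witness $u$. The only cosmetic difference is that you spell out the cases $d_G(v,u)=0$ and $d_G(v,u)=1$ separately, whereas the paper combines them into the single observation that $d_G(v,u)>1$.
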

\begin{proof}
Since $H$ is a subgraph of $G$, we have $d_G(v,u)\leq d_H(v,u)=2$. Observe that $d_H(v,u)=2$ implies $u\notin N_H(v)$, and the assumption $N_{H}(v)=N_G(v)$ implies $u\notin N_G(v)$. Then $d_G(v,u)>1$, and combined, we conclude that $d_G(v,u)=2$. The result then follows from \cref{boundarysubgraph}.
\end{proof}

\section{Graphs with at most four boundary vertices}\label{boundarysize4}

In this section, we establish the proof of \cref{maintheorem}. We start by applying the results of \cref{stability} to establish lemmas for graphs with $|\partial G|=3$ or $|\partial G|=4$.

\begin{lemma}\label{boundary3lemma}
Let $G=(V,E)$ be a connected graph. If $|(\partial G)'|=3$, then $|\partial G|=3$. 
\end{lemma}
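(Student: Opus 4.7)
The strategy is to combine the containment $(\partial G)' \subseteq \partial G$ with the Hasegawa--Saito classification (\cref{CEJZ3}) to reduce to exactly two structural possibilities, then verify each separately. From \cref{containment} we already have $|\partial G| \geq |(\partial G)'| = 3$, so the work is to prove the reverse inequality $|\partial G| \leq 3$. By \cref{CEJZ3}, $G$ is either a tree with three leaves or a tripod.

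\textbf{Tree case.} This is immediate from \cref{treeandleaves}: for a tree, $\partial G$ is precisely the set of leaves, which has size three, so $|\partial G| = 3$.

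\textbf{Tripod case.} Write the underlying $K_3$ as $\{a,b,c\}$ with attached paths of lengths $\ell_a, \ell_b, \ell_c \geq 0$. The three CEJZ boundary vertices are the ends of these three paths (which coincide with $a,b,c$ whenever the corresponding length is $0$). I would classify the remaining vertices into two types. Type~(i): interior vertices of one of the three attached paths. Such a vertex has degree $2$, and removing it disconnects the outer portion of that path from the rest, so it is a cut vertex and in particular lies on no cycle; by \cref{degree2lemma} it is not in $\partial G$. Type~(ii): an original $K_3$ vertex, say $a$, whose attached path has length $\ell_a \geq 1$ (so that $a$ is not already the leaf end). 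Then $\deg(a) = 3$ with $N(a) = \{b, c, p_1\}$, where $p_1$ is the first vertex of $a$'s attached path.

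For a type~(ii) vertex $a$, I would show $\beta_G(a,u) \leq 0$ for every $u \in V$ by a short case analysis on the location of $u$, which forces $a \notin \partial G$. The cases are: $u = a$; $u$ lies on $a$'s attached path (distance $k \geq 1$ from $a$); and $u$ lies on the subtripod obtained by deleting $a$'s outgoing path (so $u$ is $b$, $c$, or on one of their attached paths). In each case, the distances $d(a,u), d(b,u), d(c,u), d(p_1,u)$ are immediate from the tripod structure: for instance, when $u$ is on $b$'s attached path at distance $k$ from $b$, one has $d(b,u)=k$, $d(a,u)=d(c,u)=k+1$, and $d(p_1,u)=k+2$, giving $\beta_G(a,u) = 1 + 0 - 1 = 0$. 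The other cases give $-3$, $-1$, and $0$ respectively. By the symmetry of the tripod this handles $b$ and $c$ as well, completing the proof.

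\textbf{Main obstacle.} The only nontrivial step is the case analysis for the degree-$3$ tripod vertices; the calculations themselves are routine but must be organized so that one verifies the non-strict inequality $\beta_G(a,u) \leq 0$ over all witnesses $u$, not merely a few natural candidates. Everything else follows directly from the structural reduction and the cited lemmas.
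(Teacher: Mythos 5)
Your proposal is correct. The reduction via \cref{CEJZ3} and the tree case via \cref{treeandleaves} coincide with the paper's proof; the difference lies entirely in how you handle the tripod case. The paper computes that every vertex of $K_3$ has boundary stability number $1$ and then invokes \cref{graphedgeattachment} (together with \cref{edgeattachment}) once for each attached path: attaching a path at a vertex with $\beta=1$ drops that vertex out of the boundary while contributing exactly one new leaf, so the count remains $3$. You instead prove the bound $|\partial G|\leq 3$ by hand: interior path vertices are degree-$2$ cut vertices, hence lie on no cycle and are excluded by \cref{degree2lemma}, and for a junction vertex $a$ with a nontrivial attached path you check $\beta_G(a,u)\leq 0$ over the possible locations of $u$ (your distance computations, yielding $-3$, $-1$, $0$, $0$, are correct, and your case list is exhaustive). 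Your argument is more elementary and self-contained, at the cost of an explicit case analysis; the paper's argument is shorter here and, more importantly, exercises machinery (\cref{graphedgeattachment}, \cref{edgeattachment}) that it needs anyway for the four-boundary-vertex classification, where a vertex-by-vertex analysis of each family would be far more onerous.
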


\begin{proof}
We use the characterization of $|(\partial G)'|=3$ given in \cref{CEJZ3}. If $G$ is a tree on three leaves, then \cref{treeandleaves} implies that $|\partial G|=3$. For tripods, we start by considering the complete graph $K_3$. A direct calculation shows that $|\partial K_3|=3$, and each vertex $v\in V$ has $\beta(v)=1$. Tripods are formed by attaching arbitrary length paths to the vertices of $K_3$. By applying \cref{graphedgeattachment} for each nontrivial path attached, we conclude that tripods have three boundary vertices. 
\end{proof}

\begin{lemma}\label{boundary4lemma}
Let $G=(V,E)$ be a connected graph with $|(\partial G)'|=4$. Then $|\partial G|=4$ if and only if $G$ is one of the following graphs:
\begin{enumerate}[label=(\alph*)]
    \item a subdivision of the star graph $K_{1,4}$,
    \item a subdivision of the tree with exactly four leaves and two vertices of degree~3,
    \item a graph obtained from one of the trees in (b) by removing a vertex of degree 3 and adding edges between all of its neighbors, or
    \item one of the graphs in \cref{partialG4base} with a path of arbitrary length attached to each $v\in \partial G$ with $\beta(v)=1$. 
\end{enumerate}
\end{lemma}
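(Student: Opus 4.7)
The plan is to leverage \cref{mullerthm} of Müller, Pór, and Sereni, which enumerates the eight families of connected graphs with $|(\partial G)'|=4$. Since \cref{containment} gives $(\partial G)'\subseteq \partial G$, the hypothesis $|(\partial G)'|=4$ forces $|\partial G|\geq 4$, so the task reduces to deciding, within each family of \cref{mullerthm}, when no additional vertex lies in $\partial G\setminus (\partial G)'$.

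Families (a) and (b) of \cref{mullerthm} are trees with exactly four leaves, and \cref{treeandleaves} immediately gives $|\partial G|=4$; these match cases (a) and (b) of the present lemma. For family (c) of \cref{mullerthm} the graph is a triangle with three attached subtrees---two paths to leaves, and one path ending at a degree-3 vertex with two further leaf paths---yielding four leaves in total. The only cycle is the triangle, so by \cref{deg2cut} every degree-2 vertex off the triangle is a cut vertex and hence not in $\partial G$. To rule out the three triangle vertices themselves I would build the graph from $G_1=K_3$ by iteratively applying \cref{edgeattachment}: each vertex of $K_3$ has $\beta=1$, so an attachment at that vertex removes it from the boundary while the outer attachments toggle their tip leaves in the boundary. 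The final tally is exactly the four outer leaves, establishing case (c).

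The remaining families (d)--(h) of \cref{mullerthm} are infinite collections of grid-like graphs with paths attached at the CEJZ boundary vertices. For the backward direction of case (d) of the present lemma, I would take each of the finitely many base graphs listed in \cref{partialG4base}, compute all pairwise distances, and check directly that no non-CEJZ vertex satisfies the Steinerberger condition of \cref{partialG}; simultaneously I would record $\beta(v)$ at each boundary vertex. \cref{edgeattachment} then shows that attaching paths exactly at the $\beta=1$ boundary vertices preserves $|\partial G|=4$ (each attachment removes the old endpoint from $\partial G$ and introduces a new leaf in its place, with every other boundary vertex undisturbed). For the forward direction one must show that any graph from \cref{mullerthm}(d)--(h) whose underlying slab is strictly larger than those in \cref{partialG4base} carries an interior vertex in $\partial G\setminus(\partial G)'$: for interior degree-2 vertices this follows directly from \cref{degree2lemma} once a cycle through the vertex is exhibited, while for higher-degree interior vertices one locates a witness by computing distances in a small connected subgraph and promotes membership to all of $G$ via \cref{boundarysubgraph} or \cref{boundarysubgraphcor}. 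Attachments at CEJZ boundary vertices with $\beta\neq 1$ are likewise ruled out by case (a) or (c) of \cref{edgeattachment}, since such an attachment keeps both endpoints in the combined boundary and pushes the count above four.

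The main obstacle is the case analysis within families (d)--(h): for each of the five infinite families and their axis-slice-convex variants, we must pinpoint the precise $(a,c)$ threshold at which an interior Steinerberger boundary vertex first appears, and in the borderline cases compute $\beta(v)$ exactly at each element of $\partial G$ so that \cref{edgeattachment} can be applied correctly to the path attachments. The distance computations themselves are routine, but the bookkeeping across the families, together with the need to check the axis-slice-convex restrictions in families (d) and (f), is delicate and is where the bulk of the proof's length will reside.
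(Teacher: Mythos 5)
Your proposal follows essentially the same route as the paper's proof: reduce to the families of \cref{mullerthm}, dispatch the tree cases via \cref{treeandleaves}, obtain case (c) by building from $K_3$ with \cref{graphedgeattachment}/\cref{edgeattachment}, and for families (d)--(h) isolate the finitely many base graphs of \cref{partialG4base}, exhibit an extra Steinerberger boundary vertex in every larger instance by locating a witness inside a small subgraph and promoting it with \cref{boundarysubgraphcor}, and control path attachments through the condition $\beta(v)=1$. The case analysis you defer is exactly the content of the paper's nine gadget subgraphs $G_1,\dots,G_9$ in \cref{basecases}, so the plan matches the published argument (the only quibble being that the exclusion of degree-2 vertices off the triangle in case (c) is more directly cited to \cref{degree2lemma} than to \cref{deg2cut}).
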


\begin{figure}[h!]
    \centering
    \scalebox{0.4}{
\begin{tikzpicture}
    \node[shape=circle,draw=black, fill=red] (A) at (0,2) {};
    \node[shape=circle,draw=black, fill=red] (B) at (-2,0) {};
    \node[shape=circle,draw=black, fill=blue] (M) at (0,0) {};
    \node[shape=circle,draw=black, fill=red] (C) at (0,-2) {};
    \node[shape=circle,draw=black, fill=red] (D) at (2,0) {};

    \path (A) edge node {} (M);
    \path (M) edge node {} (D);
    \path (A) edge node {} (D);
    \path (A) edge node {} (B);
    \path (B) edge node {} (C);
    \path (C) edge node {} (D);
    \path (B) edge node {} (M);
    \path (C) edge node {} (M);
    \node at (0,-3) {\Huge{$N_{1,1}$}};
\end{tikzpicture}\qquad \qquad 
\begin{tikzpicture}
    \node[shape=circle,draw=black, fill=red] (A) at (0,2) {};
    \node[shape=circle,draw=black, fill=red] (B) at (-2,0) {};
    \node[shape=circle,draw=black, fill=red] (C) at (0,-2) {};
    \node[shape=circle,draw=black, fill=red] (D) at (2,0) {};

    \path (A) edge node {} (B);
    \path (A) edge node {} (D);
    \path (B) edge node {} (C);
    \path (C) edge node {} (D);
    \node at (0,-3) {\Huge{$C_4$}};
\end{tikzpicture}\qquad \qquad 
\begin{tikzpicture}
    \node[shape=circle,draw=black, fill=red] (A) at (0,2) {};
    \node[shape=circle,draw=black, fill=red] (B) at (-2,0) {};
    \node[shape=circle,draw=black, fill=red] (C) at (0,-2) {};
    \node[shape=circle,draw=black, fill=red] (D) at (2,0) {};

    \path (A) edge node {} (B);
    \path (A) edge node {} (C);
    \path (A) edge node {} (D);
    \path (B) edge node {} (C);
    \path (B) edge node {} (D);
    \path (C) edge node {} (D);
    \node at (0,-3) {\Huge{$K_4=X_{1,1}$}};
\end{tikzpicture}} 
\\
\phantom{-}\\
\scalebox{0.4}{
\begin{tikzpicture}
    \node[shape=circle,draw=black, fill=red] (A) at (-4,2) {};
    \node[shape=circle,draw=black, fill=red] (D) at (-4,-2) {};
    \node[shape=circle,draw=black, fill=blue] (M1) at (-2,0) {};
    \node (P) at (0,0) {$\cdots$};
    \node[shape=circle,draw=black, fill=blue] (M2) at (2,0) {};
    \node[shape=circle,draw=black, fill=red] (C) at (4,-2) {};
    \node[shape=circle,draw=black, fill=red] (B) at (4,2) {};

    \path (A) edge node {} (M1);
    \path (M1) edge node {} (D);
    \path (A) edge node {} (D);
    \path (B) edge node {} (C);
    \path (B) edge node {} (M2);
    \path (C) edge node {} (M2);
    \path (M1) edge node {} (P);
    \path (P) edge node {} (M2);
    \node at (0,-3) {\Huge{$X_{1\times c}$}};
\end{tikzpicture}\qquad \qquad 
\begin{tikzpicture}
    \node[shape=circle,draw=black, fill=red] (A) at (0,2) {};
    \node[shape=circle,draw=black, fill=red] (B) at (-2,0) {};
    \node[shape=circle,draw=black, fill=blue] (M) at (0,0) {};
    \node[shape=circle,draw=black, fill=red] (C) at (0,-2) {};
    \node[shape=circle,draw=black, fill=red] (D) at (2,0) {};

    \path (A) edge node {} (M);
    \path (M) edge node {} (D);
    \path (A) edge node {} (D);
    \path (A) edge node {} (B);
    \path (B) edge node {} (C);
    \path (B) edge node {} (M);
    \path (C) edge node {} (M);
    \node at (0,-3) {\Huge{$T_{1,1}$}};
\end{tikzpicture}\qquad \qquad 
\begin{tikzpicture}
    \node[shape=circle,draw=black, fill=red] (A) at (0,2) {};
    \node[shape=circle,draw=black, fill=red] (B) at (-2,0) {};
    \node[shape=circle,draw=black, fill=red] (C) at (0,-2) {};
    \node[shape=circle,draw=black, fill=red] (D) at (2,0) {};

    \path (A) edge node {} (B);
    \path (A) edge node {} (C);
    \path (A) edge node {} (D);
    \path (B) edge node {} (C);
    \path (C) edge node {} (D);
    \node at (0,-3) {\Huge{$D_{1,1}=L_{1,1}$}};
\end{tikzpicture}}

\caption{Graphs from \cref{mullerthm} (d)-(h) with $|\partial G|=4$.}
\label{partialG4base}
\end{figure}
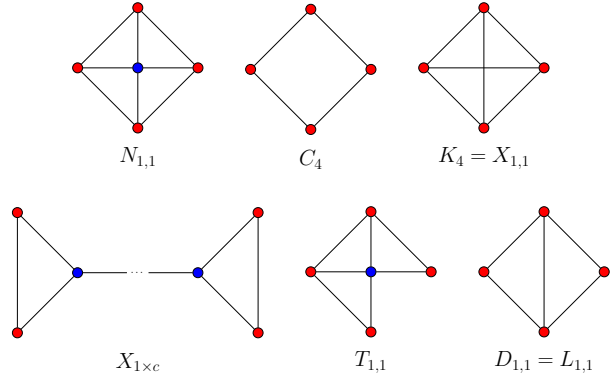

\begin{proof}
We show this by considering each case of \cref{mullerthm}. Note that graphs (a), (b), and (c) match \cref{mullerthm}, and the results for (a) and (b) follow directly from \cref{treeandleaves}. For (c), notice that these graphs $G$ can alternatively be constructed by starting with a tripod $H$ and attaching two nontrivial paths to some $v\in \partial H$. Observe that $H$ is constructed by attaching paths of arbitrary lengths to $K_3$. Using \cref{graphedgeattachment} on $K_3$ with paths attached, the boundary stability number of any vertex in $\partial H$ is 1. Using \cref{graphedgeattachment} once for each path attached to $v\in \partial H$, we conclude that $|\partial G|=4$.

Now consider the remaining cases in \cref{mullerthm}. Notice that if $G$ is constructed as a subgraph of $N_{a\times c}$, $X_{a\times c}$, $T_{a\times c}$, $D_{a\times c}$, or $L_{a\times c}$ and $|\partial G|\geq |(\partial G)'|\geq 5$, then by \cref{edgeattachment}, a graph $H$ formed by attaching paths to $G$ has $|\partial H|\geq |\partial G|\geq 5$. Hence, we must consider constructions where $|(\partial G)'|=4$ to obtain a graph $H$ with $|\partial H|=4$. We do this using the graphs in \cref{basecases}, which will allow us to show the existence of additional Steinerberger boundary vertices in addition to the four CEJZ boundary vertices described in \cref{mullerremark}. Notice that for each $G_i$ in the figure, $v\in \partial G_i$ with witness $u$, and $d_{G_i}(v,u)=2$.

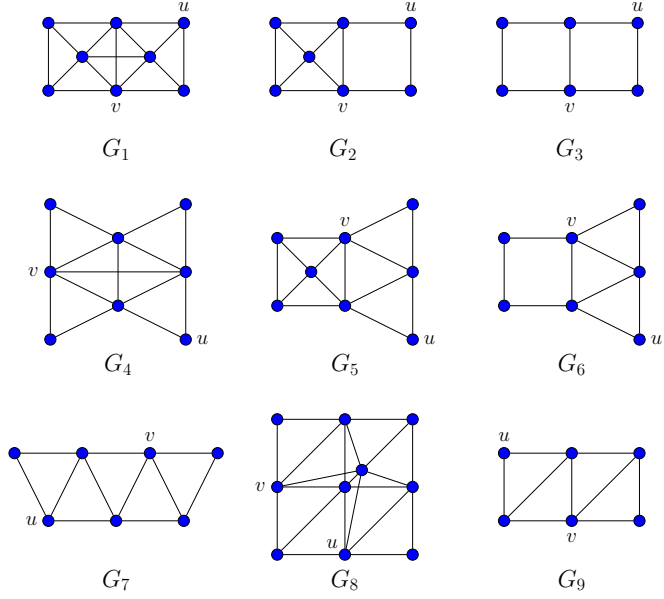
\begin{figure}[h!]
    \centering
    \scalebox{0.45}{
    \begin{tikzpicture}
    \node at (-1,0) {};
    \node at (5,0) {};
    \node[shape=circle,draw=black, fill=blue] (00) at (0,0) {};
    \node[shape=circle,draw=black, fill=blue] (10) at (2,0) {};
    \node[shape=circle,draw=black, fill=blue] (01) at (0,2) {};
    \node[shape=circle,draw=black, fill=blue] (11) at (2,2) {};
    \node[shape=circle,draw=black, fill=blue] (20) at (4,0) {};
    \node[shape=circle,draw=black, fill=blue] (21) at (4,2) {};
    \path (00) edge node {} (10);
    \path (20) edge node {} (10);
    \path (01) edge node {} (11);
    \path (21) edge node {} (11);
    \path (00) edge node {} (01);
    \path (10) edge node {} (11);
    \path (20) edge node {} (21);
    \node[shape=circle,draw=black, fill=blue] (05) at (1,1) {};
    \node[shape=circle,draw=black, fill=blue] (15) at (3,1) {};
    \path (00) edge node {} (05);
    \path (10) edge node {} (05);
    \path (01) edge node {} (05);
    \path (11) edge node {} (05);
    \path (10) edge node {} (15);
    \path (11) edge node {} (15);
    \path (20) edge node {} (15);
    \path (21) edge node {} (15);
    \path (05) edge node {} (15);
    \node at (2,-0.5) {\huge{$v$}};
    \node at (4,2.5) {\huge{$u$}};
    \node at (2,-1.75) {\Huge{$G_1$}};
    \end{tikzpicture}
    \quad 
    \begin{tikzpicture}
    \node at (-1,0) {};
    \node at (5,0) {};
    \node[shape=circle,draw=black, fill=blue] (00) at (0,0) {};
    \node[shape=circle,draw=black, fill=blue] (10) at (2,0) {};
    \node[shape=circle,draw=black, fill=blue] (01) at (0,2) {};
    \node[shape=circle,draw=black, fill=blue] (11) at (2,2) {};
    \node[shape=circle,draw=black, fill=blue] (20) at (4,0) {};
    \node[shape=circle,draw=black, fill=blue] (21) at (4,2) {};
    \path (00) edge node {} (10);
    \path (20) edge node {} (10);
    \path (01) edge node {} (11);
    \path (21) edge node {} (11);
    \path (00) edge node {} (01);
    \path (10) edge node {} (11);
    \path (20) edge node {} (21);
    \node[shape=circle,draw=black, fill=blue] (05) at (1,1) {};
    \path (00) edge node {} (05);
    \path (10) edge node {} (05);
    \path (01) edge node {} (05);
    \path (11) edge node {} (05);
     \node at (2,-0.5) {\huge{$v$}};
    \node at (4,2.5) {\huge{$u$}};
    \node at (2,-1.75) {\Huge{$G_2$}};
    \node at (-0.5,2) {};
    \end{tikzpicture}
    \quad 
    \begin{tikzpicture}
    
    \node at (-1,0) {};
    \node at (5,0) {};
    \node[shape=circle,draw=black, fill=blue] (00) at (0,0) {};
    \node[shape=circle,draw=black, fill=blue] (10) at (2,0) {};
    \node[shape=circle,draw=black, fill=blue] (01) at (0,2) {};
    \node[shape=circle,draw=black, fill=blue] (11) at (2,2) {};
    \node[shape=circle,draw=black, fill=blue] (20) at (4,0) {};
    \node[shape=circle,draw=black, fill=blue] (21) at (4,2) {};
    \path (00) edge node {} (10);
    \path (20) edge node {} (10);
    \path (01) edge node {} (11);
    \path (21) edge node {} (11);
    \path (00) edge node {} (01);
    \path (10) edge node {} (11);
    \path (20) edge node {} (21);
     \node at (2,-0.5) {\huge{$v$}};
    \node at (4,2.5) {\huge{$u$}};
    \node at (2,-1.75) {\Huge{$G_3$}};
    \end{tikzpicture} 
    }
    \\
    \phantom{-}\\
    \scalebox{0.45}{
    \begin{tikzpicture}
    \node at (-1,0) {};
    \node at (5,0) {};
    \node[shape=circle,draw=black, fill=blue] (00) at (0,0) {};
    \node[shape=circle,draw=black, fill=blue] (10) at (0,2) {};
    \node[shape=circle,draw=black, fill=blue] (01) at (4,0) {};
    \node[shape=circle,draw=black, fill=blue] (11) at (4,2) {};
    \node[shape=circle,draw=black, fill=blue] (20) at (0,4) {};
    \node[shape=circle,draw=black, fill=blue] (21) at (4,4) {};
    \path (00) edge node {} (10);
    \path (20) edge node {} (10);
    \path (01) edge node {} (11);
    \path (21) edge node {} (11);
    \path (10) edge node {} (11);
    \node[shape=circle,draw=black, fill=blue] (05) at (2,1) {};
    \node[shape=circle,draw=black, fill=blue] (15) at (2,3) {};
    \path (00) edge node {} (05);
    \path (10) edge node {} (05);
    \path (01) edge node {} (05);
    \path (11) edge node {} (05);
    \path (10) edge node {} (15);
    \path (11) edge node {} (15);
    \path (20) edge node {} (15);
    \path (21) edge node {} (15);
    \path (05) edge node {} (15);
    \node at (-0.5,2) {\huge{$v$}};
    \node at (4.5,0) {\huge{$u$}};
    \node at (2,-0.75) {\Huge{$G_4$}};
    \end{tikzpicture}
    \quad 
    \begin{tikzpicture}
    \node at (-1,0) {};
    \node at (5,0) {};
    \node[shape=circle,draw=black, fill=blue] (00) at (0,0) {};
    \node[shape=circle,draw=black, fill=blue] (02) at (0,2) {};
    \node[shape=circle,draw=black, fill=blue] (20) at (2,0) {};
    \node[shape=circle,draw=black, fill=blue] (22) at (2,2) {};
    \node[shape=circle,draw=black, fill=blue] (4n) at (4,-1) {};
    \node[shape=circle,draw=black, fill=blue] (41) at (4,1) {};
    \node[shape=circle,draw=black, fill=blue] (43) at (4,3) {};
    \node[shape=circle,draw=black, fill=blue] (11) at (1,1) {};
    \path (00) edge node {} (20);
    \path (00) edge node {} (02);
    \path (02) edge node {} (22);
    \path (22) edge node {} (20);
    \path (22) edge node {} (43);
    \path (22) edge node {} (41);
    \path (20) edge node {} (41);
    \path (43) edge node {} (41);
    \path (20) edge node {} (4n);
    \path (41) edge node {} (4n);
    \path (11) edge node {} (00);
    \path (11) edge node {} (20);
    \path (11) edge node {} (02);
    \path (11) edge node {} (22);
    \node at (2,2.5) {\huge{$v$}};
    \node at (4.5,-1) {\huge{$u$}};
    \node at (2,-1.75) {\Huge{$G_5$}};
    \end{tikzpicture}
    \quad 
    \begin{tikzpicture}
    \node at (-1,0) {};
    \node at (5,0) {};
    \node[shape=circle,draw=black, fill=blue] (00) at (0,0) {};
    \node[shape=circle,draw=black, fill=blue] (02) at (0,2) {};
    \node[shape=circle,draw=black, fill=blue] (20) at (2,0) {};
    \node[shape=circle,draw=black, fill=blue] (22) at (2,2) {};
    \node[shape=circle,draw=black, fill=blue] (4n) at (4,-1) {};
    \node[shape=circle,draw=black, fill=blue] (41) at (4,1) {};
    \node[shape=circle,draw=black, fill=blue] (43) at (4,3) {};
    \path (00) edge node {} (20);
    \path (00) edge node {} (02);
    \path (02) edge node {} (22);
    \path (22) edge node {} (20);
    \path (22) edge node {} (43);
    \path (22) edge node {} (41);
    \path (20) edge node {} (41);
    \path (43) edge node {} (41);
    \path (20) edge node {} (4n);
    \path (41) edge node {} (4n);
    \node at (2,2.5) {\huge{$v$}};
    \node at (4.5,-1) {\huge{$u$}};
    \node at (2,-1.75) {\Huge{$G_6$}};
    \end{tikzpicture}}
    \\
     \phantom{-}\\
     \scalebox{0.45}{
    \begin{tikzpicture}
    \node[shape=circle,draw=black, fill=blue] (00) at (0,0) {};
    \node[shape=circle,draw=black, fill=blue] (02) at (2,0) {};
    \node[shape=circle,draw=black, fill=blue] (04) at (4,0) {};
    \node[shape=circle,draw=black, fill=blue] (1n) at (-1,2) {};
    \node[shape=circle,draw=black, fill=blue] (11) at (1,2) {};
    \node[shape=circle,draw=black, fill=blue] (13) at (3,2) {};
    \node[shape=circle,draw=black, fill=blue] (15) at (5,2) {};
    \path (00) edge node {} (02);
    \path (00) edge node {} (1n);
    \path (00) edge node {} (11);
    \path (02) edge node {} (11);
    \path (02) edge node {} (13);
    \path (02) edge node {} (04);
    \path (04) edge node {} (13);
    \path (04) edge node {} (15);
    \path (13) edge node {} (15);
    \path (1n) edge node {} (11);
    \path (11) edge node {} (13);
    \node at (3,2.5) {\huge{$v$}};
    \node at (-0.5,0) {\huge{$u$}};
    \node at (2,-1.75) {\Huge{$G_7$}};
    \end{tikzpicture}
    \quad 
    \begin{tikzpicture}
    \node at (-1,0) {};
    \node at (5,0) {};
        \node[shape=circle,draw=black, fill=blue] (00) at (0,0) {};
        \node[shape=circle,draw=black, fill=blue] (10) at (2,0) {};
        \node[shape=circle,draw=black, fill=blue] (01) at (0,2) {};
        \node[shape=circle,draw=black, fill=blue] (11) at (2,2) {};
        \node[shape=circle,draw=black, fill=blue] (11') at (2.5,2.5) {};
        \node[shape=circle,draw=black, fill=blue] (20) at (4,0) {};
        \node[shape=circle,draw=black, fill=blue] (21) at (4,2) {};
        \node[shape=circle,draw=black, fill=blue] (02) at (0,4) {};
        \node[shape=circle,draw=black, fill=blue] (12) at (2,4) {};
        \node[shape=circle,draw=black, fill=blue] (22) at (4,4) {};
        \path (00) edge node {} (10);
        \path (20) edge node {} (10);
        \path (01) edge node {} (11);
        \path (21) edge node {} (11);
        \path (00) edge node {} (01);
        \path (10) edge node {} (11);
        \path (20) edge node {} (21);
        \path (11) edge node {} (11');
        \path (01) edge node {} (11');
        \path (10) edge node {} (11');
        \path (12) edge node {} (11');
        \path (02) edge node {} (12);
        \path (12) edge node {} (11);
        \path (02) edge node {} (01);
        \path (21) edge node {} (22);
        \path (22) edge node {} (12);
        \path (22) edge node {} (11');
        \path (01) edge node {} (12);
        \path (00) edge node {} (11);
        \path (10) edge node {} (21);
        \path (11') edge node {} (21);
        \node at (-0.5,2) {\huge{$v$}};
        \node at (1.625,0.325) {\huge{$u$}};
        \node at (2,-0.75) {\Huge{$G_8$}};
    \end{tikzpicture}
    \quad 
    \begin{tikzpicture}
    \node at (-1,0) {};
    \node at (5,0) {};
    \node[shape=circle,draw=black, fill=blue] (00) at (0,0) {};
    \node[shape=circle,draw=black, fill=blue] (10) at (2,0) {};
    \node[shape=circle,draw=black, fill=blue] (01) at (0,2) {};
    \node[shape=circle,draw=black, fill=blue] (11) at (2,2) {};
    \node[shape=circle,draw=black, fill=blue] (20) at (4,0) {};
    \node[shape=circle,draw=black, fill=blue] (21) at (4,2) {};
    \path (00) edge node {} (10);
    \path (20) edge node {} (10);
    \path (01) edge node {} (11);
    \path (21) edge node {} (11);
    \path (00) edge node {} (01);
    \path (10) edge node {} (11);
    \path (20) edge node {} (21);
    \path (00) edge node {} (11);
    \path (10) edge node {} (21);
    \node at (2,-0.5) {\huge{$v$}};
    \node at (0,2.5) {\huge{$u$}};
    \node at (2,-1.75) {\Huge{$G_9$}};
    \end{tikzpicture}
    }
    \caption{In each $G_i$, observe that $v\in \partial G_i$ with witness $u$, and $d_{G_i}(v,u)=2$.}
    \label{basecases}
\end{figure}

First, consider $N_{a\times c}$, and suppose $G$ is the subgraph induced by $V_{a\times c}^0\cup (W\cap V_{a\times c}^1)$ for axis slice convex $W$. 
If $a>1$ and $W$ contains both, one, or none of the vertices in $\{(1/2,1/2),(3/2,1/2)\}$, then $G$ will respectively contain $G_1$, $G_2$, or $G_3$ from \cref{basecases} as a subgraph with $N_G(v)=N_{G_i}(v)$, where in each case, $v$ corresponds to the vertex $(1,0)$ in $N_{a\times c}$. A similar argument applies if $c>1$, where $v$ corresponds to the vertex $(0,1)$.
In these cases, \cref{boundarysubgraphcor} implies that $v\in \partial G$ and $|\partial G|\geq 5$. From this, we see that the only graphs $G$ that have four boundary vertices must be constructed using $N_{1\times 1}$, which are $N_{1\times 1}$ itself or the cycle graph $C_4$. A direct verification shows $|\partial G|=4$ for both of these graphs, and they are shown in \cref{partialG4base}.

Now consider $G=X_{a\times c}$. Recall that $X_{a\times c}$ is isomorphic to $X_{c\times a}$, so assume without loss of generality that $a\leq c$. 
If $c\geq 2$ and $a=2$, then $X_{a\times c}$ contains $G_4$ as a subgraph with $N_G(v)=N_{G_4}(v)$, where $v$ corresponds to the vertex $(0,1)$ in $X_{a\times c}$. If $c\geq a\geq 2$, then $X_{a\times c}$ contains $G_5$ as a subgraph with $N_G(v)=N_{G_5}(v)$, where $v$ corresponds to the vertex at $(1/2,1/2)$. In these cases, \cref{boundarysubgraphcor} implies $|\partial G|\geq 5$. The graphs $X_{1\times 1}=K_4$ and $X_{1\times c}$ are depicted in \cref{partialG4base}, and a direct calculation shows that when $G$ is one of these graphs, $|\partial G|=4$.

Next, consider $T_{a\times c}$, and suppose that $G$ is the subgraph of $T_{a\times c}$ induced by $V_{a\times (c+1)}^1\cup (W\cap V_{a\times (c+1)}^0)$ for some axis slice convex $W$ that contains $(a,0)$ and $(a,c+1)$. If $a>1$ and $(a-1,c)\in W$, then $G$ will contain $G_5$ from \cref{basecases} as a subgraph with $N_G(v)=N_{G_5}(v)$, where $v$ corresponds to $(a-1/2,c+1/2)$. If $a>1$ and $(a-1,c)\not\in W$, then a similar statement is true for $G_6$, where $v$ again corresponds to $(a-1/2,c+1/2)$. If $a=1$ and $c>1$, then $G$ contains $G_7$ from \cref{basecases} as a subgraph with $N_G(v)=N_{G_7}(v)$, where $v$ corresponds to $(1,1)$. In all of these cases, \cref{boundarysubgraphcor} implies that $|\partial G|\geq 5$. Finally, when $a=c=1$,
note that any axis slice convex set containing $(1,0)$ and $(1,2)$ must also contain $(1,1)$. Hence, the only graph in this case is $T_{1\times 1}$. Direct verification shows $|\partial T_{1\times 1}|=4$, and this graph is shown in \cref{partialG4base}.

Now consider $G=D_{a\times c}$. If $a\geq 2$ and $c\geq 2$, then the graph $G_8$ from \cref{basecases} is a subgraph with $N_G(v)=N_{G_8}(v)$, where $v$ corresponds to the $(0,1)$ in $D_{a\times c}$.
If either $a=1$ and $c\geq 2$, or $a\geq 2$ and $c=1$, then $G_9$ from \cref{basecases} is a subgraph of $G$ with $N_G(v)=N_{G_9}(v)$. Thus in each of these cases, \cref{boundarysubgraphcor} implies $v\in \partial G$ and $|\partial G|\geq 5$. Finally, if $a=c=1$ then $|\partial G|=4$, and $D_{1\times 1}$ is depicted in \cref{partialG4base}.

Finally, consider $G=L_{a\times c}$. If $a>1$, then $G$ contains $G_{9}$ as a subgraph with $N_G(v)=N_{G_{9}}(v)$, where $v$ corresponds to $(1,0)$. When $a=1$, observe that by definition, $D_{1\times c}=L_{1\times c}$, and we have already considered these cases above.

Combined, we see that the only graphs described in \cref{mullerthm} parts (d)-(h) with $|\partial G|=4$ are those formed by attaching paths to the graphs in \cref{partialG4base} at the vertices in $\partial G=(\partial G)'$. By applying \cref{graphedgeattachment} repeatedly, we conclude that attaching nontrivial paths at $v\in \partial G$ preserves the number of boundary vertices if and only if $\beta(v)=~1$.
\end{proof}

We are now able to prove our main theorem characterizing graphs with small Steinerberger boundary.

\begin{proof}[Proof of \cref{maintheorem}]  
By \cref{CEJZ2} and \cref{boundary2}, $|\partial G|=2$ and $|(\partial G')|=2$ are equivalent, corresponding precisely to paths. This establishes \cref{maintheorem}(a). Combined with $(\partial G)'\subseteq \partial G$, we also see that a necessary condition for $|\partial G|=3$ is that $|(\partial G)'|=3$. Applying \cref{boundary3lemma}, we conclude that $|\partial G|=3$ if and only if $|(\partial G)'|=3$, implying \cref{maintheorem}(b). 

Using similar reasoning, we see that a necessary condition for $|\partial G|=4$ is that $|(\partial G)'|=4$. We consider each case in \cref{boundary4lemma}. By \cref{treeandleaves}, any tree on four leaves has $|\partial G|=4$, and this accounts for \cref{boundary4lemma} (a) and (b). For \cref{boundary4lemma} (c), observe that this is the last graph in \cref{partialG4}. The remaining graphs in \cref{boundary4lemma} (d) are also given in \cref{partialG4}. Hence, we conclude \cref{maintheorem}(c). 
\end{proof}

\section{Some graphs with large boundary}\label{largeboundary}

In this section, we consider some graphs with large Steinerberger boundary. Observe that the cycle and complete graphs consist entirely of boundary vertices, as each vertex is peripheral. In the case of $\diam(G)=2$, Chartrand, Erwin, Johns, and Zhang showed the following result on the CEJZ boundary. 

\begin{lemma}[Chartrand, Erwin, Johns, and Zhang \cite{chartrand}, Lemma 2.1]\label{diameter2}
Let $G=(V,E)$ be connected graph of diameter 2. Then every vertex $v$ is in $(\partial G)'$ unless $v$ is the unique vertex of $G$ having eccentricity $1$.
\end{lemma}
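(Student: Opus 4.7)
The plan is to carry out a straightforward case analysis on the eccentricity of $v$, exploiting the very restrictive fact that $\diam(G)=2$ forces $\ecc(v)\in\{1,2\}$.

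First I would handle $\ecc(v)=2$: any such $v$ is peripheral, so by \cref{peripheral} (Observation 1.6 in the paper), $v\in(\partial G)'$ immediately. This disposes of every non-universal vertex without further work.

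Next I would turn to $\ecc(v)=1$, so that $v$ is a universal vertex, and split into two subcases depending on whether $v$ is \emph{the unique} vertex of eccentricity $1$. If there is another vertex $v'\neq v$ with $\ecc(v')=1$, then $v'$ is also universal, and I claim $u=v'$ witnesses $v\in(\partial G)'$: indeed $d(v,v')=1$, and for every neighbor $w$ of $v$ we have $d(w,v')\le 1=d(v,v')$ (it is $0$ if $w=v'$ and $1$ otherwise, since $v'$ is universal). So the condition in the definition of $(\partial G)'$ is satisfied.

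It remains to show that if $v$ is the unique vertex of eccentricity $1$, then $v\notin(\partial G)'$, which completes the characterization. I would do this by ruling out every candidate witness $u$. If $u=v$, then $d(v,u)=0$ but any neighbor $w$ of $v$ satisfies $d(w,u)=1>0$, so $u=v$ fails. If $u\neq v$, then universality of $v$ gives $d(v,u)=1$; since $v$ is the \emph{unique} vertex of eccentricity $1$, we have $\ecc(u)=2$, so there exists $w\in V$ with $d(u,w)=2$. Because $v$ is universal, $w$ is a neighbor of $v$, yet $d(w,u)=2>1=d(v,u)$, so $u$ fails as well. Hence no witness exists, giving $v\notin(\partial G)'$.

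There is no real obstacle here: every step reduces to the definition of $(\partial G)'$ combined with the observation that $\diam(G)=2$ makes all nontrivial distances either $1$ or $2$. The only subtlety worth being careful about is the last subcase, where one must verify that $u=v$ is also excluded as a candidate witness (not just $u\neq v$), and that the existence of the distance-$2$ vertex $w$ from $u$ truly lies in $N(v)$, which is immediate from $v$ being universal.
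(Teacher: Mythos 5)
Your proof is correct. Note that the paper does not actually prove this statement---it is quoted from Chartrand, Erwin, Johns, and Zhang \cite{chartrand} without proof---so there is no in-paper argument to compare against; your direct verification from the definition of $(\partial G)'$ (peripheral vertices via \cref{peripheral} for $\ecc(v)=2$, a second universal vertex as witness when one exists, and the exhaustion of all candidate witnesses $u$, including $u=v$, for the unique universal vertex) is complete and handles every case.
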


The same result holds for the Steinerberger boundary. We now show this, with some additional results.

\begin{theorem}\label{diameter}
Let $G=(V,E)$ be a connected graph on at least two vertices with $\diam(G)\leq 2$. If $G$ has a single vertex $v$ with eccentricity 1, then $\partial G=V\setminus \{v\}$. Otherwise, $\partial G=V$. Furthermore, the bound $\diam(G)\leq 2$ is sharp.
\end{theorem}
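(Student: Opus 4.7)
The plan is to split $V$ into vertices of eccentricity $2$ (or, when $\diam(G)=1$, all of $V$) and universal vertices of eccentricity $1$, handling each by a short argument. Since $\diam(G)\le 2$, any vertex $x$ with $\ecc(x)=2$ is peripheral and so lies in $\partial G$ by \cref{peripheral}. Hence the only vertices whose membership in $\partial G$ requires work are the universal vertices.

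The key step is a direct computation characterizing when a universal vertex $v$ lies in $\partial G$. Since $N(v)=V\setminus\{v\}$, for $u=v$ the inequality in \cref{partialG} fails trivially, and for $u\neq v$ it reduces to
\[\sum_{w\in V\setminus\{v,u\}} d(w,u)\;<\;\deg(v)\;=\;|V|-1,\]
since $d(v,u)=1$ and $d(u,u)=0$. The sum has $|V|-2$ terms, each in $\{1,2\}$ because $\diam(G)\le 2$ and $w\neq u$. So the inequality holds iff every term equals $1$, i.e.\ iff $u$ is adjacent to all of $V\setminus\{u\}$; together with $u$ being adjacent to the universal $v$, this means $u$ is itself universal. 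Thus $v\in\partial G$ iff $G$ contains a universal vertex other than $v$.

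From this characterization the trichotomy is immediate. If $G$ has a unique universal vertex $v$, then $v\notin\partial G$ while every other vertex has eccentricity $2$ and so lies in $\partial G$, giving $\partial G=V\setminus\{v\}$. If $G$ has no universal vertex, every vertex is peripheral, so $\partial G=V$. If $G$ has at least two universal vertices (including the case $\diam(G)=1$, i.e.\ $G=K_n$), then each universal vertex has another universal vertex as a witness, and all remaining vertices are peripheral, so again $\partial G=V$.

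For sharpness I will exhibit a graph of diameter $3$ violating the conclusion. The path $P_4$ has $\diam(P_4)=3$, no universal vertex, and $|\partial P_4|=2<4=|V|$ by \cref{boundary2}, so the hypothesis $\diam(G)\le 2$ cannot be relaxed. The only nontrivial step in the argument is the neighbor-sum estimate for universal vertices, which is not really an obstacle—it is a one-line case analysis—so the proof is essentially mechanical once the split by eccentricity is made.
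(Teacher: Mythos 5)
Your proof is correct, and it takes a mildly but genuinely different route from the paper's. The paper handles all of the positive cases (every vertex other than a unique eccentricity-$1$ vertex) in one stroke by importing the CEJZ result \cref{diameter2} together with the containment $(\partial G)'\subseteq \partial G$, and then only performs a direct computation (phrased as $\beta_G(v)\le 0$) for the unique universal vertex. You instead stay entirely inside the Steinerberger definition: eccentricity-$2$ vertices are peripheral and hence boundary by \cref{peripheral}, and your neighbor-sum computation for a universal vertex $v$ yields the sharper, self-contained characterization that $v\in\partial G$ if and only if $G$ has a second universal vertex; this single statement subsumes both the negative case (unique universal vertex) and the positive case (two or more universal vertices) that the paper delegates to \cref{diameter2}. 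The arithmetic is essentially the same as the paper's $\beta$ computation, so what your version buys is independence from the CEJZ literature and a slightly stronger intermediate fact; what the paper's version buys is brevity and consistency with its $\beta$-based toolkit. For sharpness, your $P_4$ example is logically sufficient (diameter $3$, no universal vertex, yet $|\partial P_4|=2<|V|$), whereas the paper's $n$-barbell graph additionally feeds the subsequent corollary on minimum degree and shows the failure is not just an artifact of long paths. One small citation fix: $|\partial P_4|=2$ follows from \cref{treeandleaves} (the boundary of a tree is its set of leaves); \cref{boundary2} only gives the converse implication.
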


\begin{proof}
If $G$ has diameter 1, then $G$ is the complete graph $K_n$, which satisfies $\partial G=V$. Otherwise, by \cref{containment} and \cref{diameter2}, every vertex in G with eccentricity greater than 1 is a Steinerberger boundary vertex. If $G$ has no vertices of eccentricity 1, then $\partial G=V$. If $G$ has two or more vertices of eccentricity 1, then by \cref{diameter2}, these vertices are in $\partial G$. Thus, $\partial G=V$. 

Now suppose that $G$ has a single vertex $v$ of eccentricity $1$. Then $N_G(v)=V\setminus \{v\}$, implying $d(v,w)=1$ for all $w\in V\setminus \{v\}$.
Let $u\in N_G(v)$ satisfy $\beta(v)=\beta(v,u)$. Because no vertex other than $v$ has eccentricity $1$, we know that $\ecc(u)\geq 2$. In particular, there exists at least one vertex $x\in N(v)$ such that $d(u,x)= 2$. Therefore,
\begin{align*}
        \beta_{G}(v) & =\beta_{G}(v,u)
        =\sum_{w\in N_G(v)}[d(v,u)-d(w,u) ]\\
        & = [d(v,u)-d(u,u)]+[d(v,u)-d(x,u)]  +\sum_{w\in N_G(v)\setminus \{u,x\}}[d(v,u)-d(w,u)] \\
        &= 1 + (-1) +\sum_{w\in N_G(v)\setminus \{u,x\}}[1-d(w,u)]
        \leq 0. 
\end{align*}

To show that our bound is sharp, let $G=(V,E)$ be the $n$-barbell graph for $n\geq 2$, which is formed by adding an edge between two disjoint copies of $K_n$, as shown in \cref{barbell}. Observe that $\diam(G)=3$. Denote the two $K_n$ graphs $G_1=(V_1,E_1),G_2=(V_2,E_2)$, and let $v_1\in G_1$, $v_2\in G_2$ be the vertices where an edge is added. Since $\beta_{G_1}(v_1)=\beta_{G_2}(v_2)=1$, \cref{edgeattachment}, implies that $\partial G = (\partial G_1\cup\partial G_2)\setminus \{v_1,v_2\}$. Hence, $|\partial G|=|V|-2$.
\end{proof}

\begin{figure}[h!]
    \centering
    \scalebox{0.5}{
    \begin{tikzpicture}
    \node[shape=circle,draw=black, fill=red] (A) at (0,1) {};
    \node[shape=circle,draw=black, fill=red] (B) at (-1,0) {};
    \node[shape=circle,draw=black, fill=red] (C) at (0,-1) {};
    \node[shape=circle,draw=black, fill=blue] (D) at (1,0) {};
    \node[shape=circle,draw=black, fill=red] (A2) at (4,1) {};
    \node[shape=circle,draw=black, fill=blue] (B2) at (3,0) {};
    \node[shape=circle,draw=black, fill=red] (C2) at (4,-1) {};
    \node[shape=circle,draw=black, fill=red] (D2) at (5,0) {};
    \path (A) edge node {} (B);
    \path (A) edge node {} (C);
    \path (A) edge node {} (D);
    \path (B) edge node {} (C);
    \path (B) edge node {} (D);
    \path (C) edge node {} (D);
    \path (D) edge node {} (B2);
    \path (A2) edge node {} (B2);
    \path (A2) edge node {} (C2);
    \path (A2) edge node {} (D2);
    \path (B2) edge node {} (C2);
    \path (B2) edge node {} (D2);
    \path (C2) edge node {} (D2);
\end{tikzpicture}}
    \caption{The 4-barbell graph with $\partial G$ shown in red.}
    \label{barbell}
\end{figure}
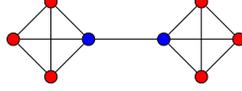

\begin{corollary}
Let $G=(V,E)$ be a graph on at least two vertices with minimum degree $\delta(G)\geq \frac{1}{2}(|V|-1)$. If $G$ has a single vertex $u$ of eccentricity $1$, then $\partial G=V\setminus \{u\}$. Otherwise, $\partial G=V$. Furthermore, the bound $\frac{1}{2}(|V|-1)$ is sharp.
\end{corollary}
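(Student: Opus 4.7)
The plan is to reduce this corollary to \cref{diameter} by showing that the degree condition $\delta(G)\geq \tfrac{1}{2}(|V|-1)$ forces $\diam(G)\leq 2$. Once that reduction is in hand, the conclusion about $\partial G$ follows verbatim from the previous theorem, since eccentricity-$1$ vertices are the same in both formulations.

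For the diameter bound, I would argue as follows. Let $v,w\in V$ be any two vertices; it suffices to show $d(v,w)\leq 2$. If $v$ and $w$ are adjacent we are done, so assume they are not. Then $N(v)$ and $N(w)$ are both subsets of $V\setminus\{v,w\}$, which has $|V|-2$ elements. By the hypothesis,
\[
|N(v)| + |N(w)| \;\geq\; 2\cdot \tfrac{1}{2}(|V|-1) \;=\; |V|-1 \;>\; |V|-2,
\]
so by inclusion-exclusion $N(v)\cap N(w)\neq\emptyset$, yielding a common neighbor and hence $d(v,w)=2$. Thus $\diam(G)\leq 2$, and \cref{diameter} gives exactly the claimed description of $\partial G$.

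For sharpness, I would reuse the $n$-barbell graph $G$ from the proof of \cref{diameter} (two disjoint copies of $K_n$ joined by a single edge). Here $|V|=2n$, each vertex has degree either $n-1$ or $n$, and the bridge vertices have degree $n$ while all other vertices have degree $n-1$, so $\delta(G)=n-1$. The required threshold is $\tfrac{1}{2}(|V|-1)=n-\tfrac{1}{2}$, i.e.\ $\delta(G)\geq n$, which fails by exactly $\tfrac{1}{2}$ (equivalently, by a single integer step). As already computed in the proof of \cref{diameter}, $|\partial G|=|V|-2$, so neither $\partial G=V$ nor $\partial G=V\setminus\{u\}$ holds; this demonstrates that the threshold $\tfrac{1}{2}(|V|-1)$ cannot be relaxed.

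I do not expect a serious obstacle: the neighborhood-counting argument is standard (indeed this is the classical Ore-type observation that $\delta(G)\geq (n-1)/2$ implies diameter at most $2$), and the sharpness example is already available from the preceding section. The only small subtlety is checking that eccentricity-$1$ vertices under the degree hypothesis agree with those under the diameter hypothesis, which is automatic since $\diam(G)\leq 2$ is the only property of $G$ used in \cref{diameter}.
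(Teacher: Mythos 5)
Your proposal is correct and matches the paper's proof in all essentials: the same neighborhood-counting argument (phrased via inclusion--exclusion rather than the paper's union-size contradiction, but equivalent) shows $\diam(G)\leq 2$, the reduction to the diameter theorem is identical, and the $n$-barbell graph is exactly the paper's sharpness example. No gaps.
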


\begin{proof}
Suppose $v\in G$ has $\ecc(v)>1$. We claim that $\ecc(v)=2$ by showing that $d(v,u)\leq 2$ for any $u\in V$. Let $u\in V$ be any vertex. If $u\in N_G(v)$, then $d(v,u)=1$. Otherwise, $u\notin N_G(v)$ and $v\notin N_G(u)$. The assumption $\delta(G)\geq \frac{1}{2}(|V|-1)$ implies $|N_G(v)|\geq \frac{1}{2}(|V|-1)$ and $|N_G(u)|\geq \frac{1}{2}(|V|-1)$. If $N_G(v)\cap N_G(u)=\emptyset$, then
\[|V|\geq |N_G(u)\cup N_G(v)\cup \{u,v\}|=|N_G(u)|+|N_G(v)|+2 \geq |V|+1,\]
which is a contradiction. Hence, $d(v,u)\leq 2$ for all $u$ and $\ecc(v)=2$. We conclude that when $\diam(G)\neq 1$, it must be that $\diam(G)=2$. The result now follows from \cref{diameter}, where the $n$-barbell graph also establishes that the bound $\frac{1}{2}(|V|-1|)$ is sharp, as the minimum degree in the $n$-barbell graph is $\frac{1}{2}|V|-1<\frac{1}{2}(|V|-1)$.
\end{proof}

\section{Open questions}\label{futurework}

We have characterized the graphs with $|\partial G|$ at most 4. Hence, we propose the natural next step.

\begin{problem}
Classify connected graphs $G=(V,E)$ for which $|\partial G|=5$.
\end{problem}

Our characterization of $|\partial G|=4$ relied on a characterization of $|(\partial G)'|=4$. However, a characterization of graphs with $|(\partial G)'|=5$ is not currently known. Hence, this problem requires new methods for studying Steinerberger boundary vertices. Since the Steinberberger boundary has more vertices, we expect the characterization problem to be easier than for the CEJZ boundary. 

In \cref{largeboundary}, we described some graphs where all or almost all vertices are in $\partial G$. We propose identifying some additional cases when this occurs. Note that a complete characterization is likely difficult. Data on random graphs suggests that they often consist entirely of boundary vertices. 
\begin{problem}
Describe some additional cases when $|\partial G|=|V|-1$ or $|\partial G|=|V|$.
\end{problem}

In \cref{stability}, we characterized when vertices of degree 2 are in $\partial G$, and one case of our results in \cref{largeboundary} was on graphs with large minimum degree. We propose a problem related to vertices with low degree.

\begin{problem}
Characterize when vertices of degree $3$ or $4$ are in $\partial G$. Apply this characterization to find properties of $\partial G$ for graphs with maximum degree $\Delta =3$ or $\Delta= 4$.
\end{problem}

Finally, we propose a question of the boundary stability number, a central tool in establishing our results. To prove our characterization of graphs with small boundary, we explicitly characterized the effect on boundary stability number when adding an edge between two graphs, and we described a case when boundary vertices of a subgraph are also boundary vertices in the graph itself. One natural question is the effect on boundary stability number for other operations.

\begin{problem}
Describe the effect on the boundary stability number of a vertex under other graph operations, such as edge contraction and Cartesian products.
\end{problem}

\section*{Acknowledgements}
We would like to thank Stefan Steinerberger for suggesting this problem, helpful discussions, and valuable feedback. We would also like to thank Catherine Babecki for valuable feedback and Sam Millard for helpful discussions. Finally, we would like to thank the Washington eXperimental Mathematics Lab for organizing and supporting this project.

\printbibliography

\end{document}